\DeclareRobustCommand{\cyrtext}{%
  \fontencoding{T2A}\selectfont\def\encodingdefault{T2A}}
\DeclareRobustCommand{\textcyr}[1]{\leavevmode{\cyrtext #1}}
\theoremstyle{plain}
\newtheorem{thm}{\protect\theoremname}
\theoremstyle{plain}
\newtheorem{lem}{\protect\lemmaname}
\theoremstyle{definition}
\newtheorem*{defn*}{\protect\definitionname}
\gdef\SetFigFontNFSS#1#2#3#4#5{} 
\gdef\SetFigFont#1#2#3#4#5{} 
\def\clap#1{\hbox to 0pt{\hss#1\hss}}
\DeclareMathOperator{\dist}{dist}
\DeclareMathOperator{\supp}{supp}
\DeclareMathOperator{\rk}{rnk}
\definecolor{myblue}{rgb}{0.09,0.32,0.44} 
\theoremstyle{remark}
\newtheorem*{qst*}{Question}
\newtheorem*{rmrks*}{Remarks}
\newtheorem*{thm*}{Theorem}
\newlength{\tempindent} 
\newcommand{\lazyenum}{
\setlength{\tempindent}{\parindent} 
\begin{enumerate}[leftmargin=0cm,itemindent=0.7cm,labelwidth=\itemindent,labelsep=0cm,align=left,label=\arabic*)]
\setlength{\parskip}{\smallskipamount}
\setlength{\parindent}{\tempindent}
}
\def\moverlay{\mathpalette\mov@rlay}
\def\mov@rlay#1#2{\leavevmode\vtop{%
   \baselineskip\z@skip \lineskiplimit-\maxdimen
   \ialign{\hfil$\m@th#1##$\hfil\cr#2\crcr}}}
\newcommand{\charfusion}[3][\mathord]{
    #1{\ifx#1\mathop\vphantom{#2}\fi
        \mathpalette\mov@rlay{#2\cr#3}
      }
    \ifx#1\mathop\expandafter\displaylimits\fi}
\renewcommand{\andify}{%
  \nxandlist{\unskip, }{\unskip{} \@@and~}{\unskip{} \@@and~}}
\def\author@andify{%
  \nxandlist {\unskip ,\penalty-1 \space\ignorespaces}%
    {\unskip {} \@@and~}%
    {\unskip \penalty-2 \space \@@and~}%
}
\let\@wraptoccontribs\wraptoccontribs
\def\afs#1#2{\href{#1}{\nolinkurl{#2}}}
\def\afs#1#2{\burlalt{#1}{#2}}
\def\affs#1#2{\href{#1}{\nolinkurl{#2}}}
\numberwithin{lem}{section}
\providecommand{\definitionname}{Definition}
\providecommand{\lemmaname}{Lemma}
\providecommand{\theoremname}{Theorem}
\begin{document}
\title[Luzin's problem]{Luzin's problem on Fourier convergence and homeomorphisms}
\author{Gady Kozma and Alexander Olevski\u i}
\begin{abstract}
We show that for every continuous function $f$ there exists an absolutely
continuous homeomorphism of the circle $\varphi$ such that the Fourier
series of $f\circ\varphi$ converges uniformly. This resolves a problem
set by N. N. Luzin.
\end{abstract}

\address{GK: Department of mathematics, the Weizmann Institute of Science,
Rehovot, Israel.}
\email{gady.kozma@weizmann.ac.il}
\address{AO: Department of mathematics, Tel Aviv University, Tel Aviv, Israel.}
\email{olevskii@post.tau.ac.il}
\thanks{GK is supported by the Israel Science Foundation and by the Jesselson
Foundation.}
\maketitle

\section{Introduction}

Is it possible to improve the convergence properties of the Fourier
series by an appropriate change of variable (homeomorphism of the
circle)? The history of the problem goes back to a result of Julius
P\'al \cite{P14} (a Hungarian mathematician, who worked for many
years in Copenhagen), improved by Harald Bohr \cite{B35}.
\begin{thm}
[P\'al-Bohr]Given a real function $f\in C(\mathbb{T})$ one can
find a homeomorphism $\phi:\mathbb{T\to}\mathbb{T}$ such that the
Fourier series of the superposition $f\circ\phi$ converges uniformly
on $\mathbb{T}$.
\end{thm}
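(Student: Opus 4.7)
The first instinct is to apply the Dirichlet--Jordan criterion: a continuous function of bounded variation has a uniformly convergent Fourier series. One would hope that every $f\in C(\mathbb{T})$ admits a homeomorphism $\phi$ with $f\circ\phi$ of bounded variation. But this is impossible: total variation is invariant under monotone reparametrization of $\mathbb{T}$, because any homeomorphism merely permutes the partitions appearing in the supremum defining $\var$, so $\var(f\circ\phi)=\var(f)$ and no change of variable can reduce infinite variation to finite.

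The next candidate is a smoothness criterion like the Dini--Lipschitz test, $\omega(g,\delta)=o(1/\log(1/\delta))\Rightarrow S_{n}g\to g$ uniformly. The crude global bound $\omega(f\circ\phi,\delta)\le\omega(f,\omega(\phi,\delta))$ is of no help, since every homeomorphism of $\mathbb{T}$ satisfies $\omega(\phi,\delta)\ge\delta$ by a pigeonhole argument on equally spaced points, so this estimate is never better than $\omega(f,\delta)$ itself. However, the \emph{local} modulus of continuity of $f\circ\phi$ at a parameter point $x$ is governed by the local stretching $|\phi(x+\delta)-\phi(x)|$ together with the local oscillation of $f$ near $\phi(x)$, not by either global maximum. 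My plan is to design $\phi$ to be very flat on those parameter intervals that map into regions where $f$ oscillates badly, paying for this by being steeper where $f$ is tame.

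Concretely, at each dyadic scale $\delta_{k}=2^{-k}$ I would cover the ``bad'' portion of the range of $f$ (where the local oscillation exceeds the Dini--Lipschitz margin at scale $\delta_{k}$) by finitely many small intervals, and modify $\phi$ so that the preimages of these intervals are long compared to their images; successive scales are handled inductively so as to preserve earlier gains. The main obstacle I anticipate is an $L^{1}$ budget constraint: the mass of $\phi'$ over $\mathbb{T}$ is fixed, and for sufficiently oscillatory $f$ this budget may be insufficient to enforce Dini--Lipschitz at every point simultaneously. In that regime one would need to replace Dini--Lipschitz by a more delicate pointwise localized condition, or to bound the Dirichlet integrals $S_{n}(f\circ\phi)(x)$ directly via a dyadic splitting of the kernel matched to the construction, exploiting cancellation between scales. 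A further technical point is to ensure that the nested refinements produce a genuine homeomorphism, not merely a monotone continuous surjection, which requires careful uniform-continuity and injectivity bookkeeping throughout the induction.
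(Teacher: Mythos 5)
Your proposal is not a proof: it is a correct diagnosis of why two elementary approaches fail, followed by an honest statement that a third approach would need ideas you have not supplied. Your dismissal of bounded variation is right (variation is a reparametrization invariant), and your observation that the crude bound $\omega(f\circ\phi,\delta)\le\omega(f,\omega(\phi,\delta))$ cannot beat $\omega(f,\delta)$ is also right, since some arc of length $\delta$ must have image of length $\ge c\delta$. Your remaining plan --- flatten $\phi$ where $f$ oscillates, then either enforce Dini--Lipschitz or bound the Dirichlet integrals directly by a dyadic splitting of the kernel with cancellation between scales --- is the germ of the ``real'' proofs of Saakyan and of Kahane--Katznelson, but you stop exactly at the hard part. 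The $L^{1}$ budget constraint you name is a genuine obstruction to the Dini--Lipschitz route (one cannot in general make $\omega(f\circ\phi,\delta)=o(1/\log(1/\delta))$ uniformly), so the burden falls entirely on the ``delicate pointwise condition / direct Dirichlet-integral estimate'' alternative, for which you give no construction, no criterion, and no cancellation mechanism. Until that step is carried out there is no theorem.

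For comparison, the proof in the paper (due to P\'al and Bohr) bypasses every one of these difficulties with a single geometric trick: assuming $f>0$, take a conformal map $u$ of the disk onto the domain $\Omega=\{re^{i\theta}:r<f(\theta)\}$ with $u(0)=0$. By Carath\'eodory, $g(t)\coloneqq|u(e^{it})|$ is exactly $f\circ\phi$ for some homeomorphism $\phi$, and the identity
\[
\iint_{\mathbb D}|u'|^{2}=\pi\sum_{k\ge1}k|c_{k}|^{2}=\operatorname{area}(\Omega)<\infty
\]
puts the boundary trace $u|_{\partial\mathbb D}$, hence $g$, into the Sobolev space $H^{1/2}$. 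One then shows $H^{1/2}\cap C$ is a Banach algebra closed under composition with analytic functions (so $|g|=\sqrt{|g|^{2}}\in H^{1/2}\cap C$), and that continuous $H^{1/2}$ functions have uniformly convergent Fourier series, by Cauchy--Schwarz applied to $S_{n}-F_{n}$. This produces the homeomorphism implicitly via the Riemann mapping theorem rather than by any explicit local-flattening construction, and the convergence criterion it supplies --- membership in $H^{1/2}\cap C$ --- is exactly the ``more delicate localized condition'' your sketch was searching for but never pinned down.
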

For the convenience of the reader we include a proof of the P\'al-Bohr
theorem in \S\ref{sec:The-Pal-Bohr-theorem}, essentially following
the original idea, a surprising use of the Riemann conformal mapping
theorem.

The P\'al-Bohr theorem inspired a number of problems which have been
studied actively. In particular, real proofs providing a visible construction
of $\phi$ were given by Saakyan \cite{S79} and by Kahane and Katznelson
\cite{KK83}. Kahane and Katznelson also proved the complex version
of the theorem, and further showed that for any compact family of
functions $f$ there exists a single homeomorphism $\phi$ carrying
the entire family into the space of functions with uniformly converging
Fourier series. On the other hand, it is possible to construct a real
function $f\in C(\mathbb{T})$ such that no homeomorphism $\phi$
can bring it to the Wiener algebra (answering a problem posed by N.
Luzin), see \cite{O81}.

In all known proofs of the P\'al-Bohr theorem the homeomorphism $\phi$
in general is singular. This prompted Luzin to ask: is it possible,
for an arbitrary $f\in C(\mathbb{T})$, to find an absolutely continuous
homeomorphism $\phi$ such that $f\circ\phi$ has a uniformly converging
of Fourier series?\footnote{Our attempts to discover the history of the problem were only partially
successful. It is mentioned in Bary's book \cite[chapter 4, \S 12]{B64},
attributed to Luzin, who passed away in 1950, before Bary's book was
published (as far as we know, Luzin did not conjecture one direction
or the other for the expected answer). Another piece of information
is from the MathSciNet entry for \cite{O81}, which states that Luzin's
other problem is from the twenties. It is reasonable to assume that
both problems were asked together, but we have no evidence of that.
We thank Carruth McGehee for help with our historical research.} This problem remained open so far. In this paper we resolve this
conjecture.
\begin{thm}
\label{thm:main}Given a real function $f\in C(\mathbb{T})$ one can
find an absolutely continuous homeomorphism $\phi:\mathbb{T\to}\mathbb{T}$
such that the Fourier series of the superposition $f\circ\phi$ converges
uniformly on $\mathbb{T}$.

Further, for any $p<\infty$ it is possible to have in addition that
$\phi'\in L^{p}$.
\end{thm}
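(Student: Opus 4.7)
The plan is to build $\phi$ as a limit of piecewise linear homeomorphisms that slow down on preimages of regions where $f$ has large oscillation, and speed up elsewhere, with the compression tuned to two competing demands. On one hand, $g=f\circ\phi$ must have modulus of continuity beating a classical sufficient criterion for uniform Fourier convergence (for instance Dini--Lipschitz, $\omega(g,\delta)\log(1/\delta)\to 0$). On the other, the derivative $\phi'$ must lie in $L^{p}$ for the prescribed but arbitrary $p<\infty$.

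Concretely, for each $n$ I would fix a threshold $\eta_{n}$ with $n\eta_{n}\to 0$, and let the \emph{bad} set $B_{n}\subset\mathbb{T}$ be the union of the dyadic intervals of length $2^{-n}$ on which the oscillation of $f$ exceeds $\eta_{n}$. Uniform continuity of $f$ guarantees $|B_{n}|\to 0$, though the rate is a priori unknown. Build $\phi$ by an infinite composition $\phi=\psi_{1}\circ\psi_{2}\circ\cdots$, where $\psi_{n}$ is piecewise linear with derivative $\lambda_{n}\ll 1$ on $\psi_{n}^{-1}(B_{n})$ and slightly above $1$ elsewhere. The derivative $\phi'$ is then a telescoping product of the $\psi_{k}'$'s along the tower; a direct change of variables gives $\int|\phi'|^{p}\lesssim C+\sum_{k}|B_{k}|\lambda_{k}^{1-p}$, which stays finite provided $|B_{k}|$ decays fast enough relative to $\lambda_{k}^{p-1}$. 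Meanwhile the Dini--Lipschitz target forces $\omega_{f}(\lambda_{k}2^{-k})\lesssim\eta_{k}$, dictating how small $\lambda_{k}$ must be. One reconciles the two by extracting a sparse subsequence of scales $n_{j}$ at which to refine, chosen adaptively to $f$ so that $|B_{n_{j}}|\lambda_{n_{j}}^{1-p}<2^{-j}$; this uses only the qualitative fact $|B_{n}|\to 0$.

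The chief obstacle is exactly this two-parameter optimisation: small $\lambda_{n}$ is demanded by the modulus-of-continuity side yet punished by the $L^{p}$ side, while a merely continuous $f$ provides no quantitative rate on $|B_{n}|\to 0$. That the theorem holds for any finite $p$, but $p=\infty$ is presumably impossible, corresponds to the fact that one may always lose a polynomial factor in $\lambda_{n}$, but not an exponential one. The remaining steps are routine: show $\phi_{n}\to\phi$ uniformly so that the limit is a homeomorphism, pass $\phi_{n}'\to\phi'$ in $L^{p}$ by dominated convergence, and verify the Dini--Lipschitz bound for $f\circ\phi$ by splitting any interval of length $2^{-n}$ in the domain into its pre-bad and pre-good pieces and bounding each contribution using the compression built into $\psi_{n}$.
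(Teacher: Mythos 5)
The proposal has a fatal gap: the intermediate claim that uniform continuity of $f$ guarantees $|B_n|\to 0$ is false once $\eta_n=o(1/n)$, and more fundamentally, the Dini--Lipschitz target for $f\circ\phi$ is unattainable by \emph{any} increasing homeomorphism $\phi$, absolutely continuous or not. Set $\mu_f(\delta)\coloneqq\inf_{|I|=\delta}\operatorname{osc}(f,I)$, the smallest oscillation of $f$ over intervals of length $\delta$. Since $\int_{\mathbb{T}}\bigl(\phi(x+\delta)-\phi(x)\bigr)\,dx=\delta$ for every orientation-preserving circle homeomorphism, some interval $J$ of length $\delta$ satisfies $|\phi(J)|\ge\delta$, whence
\[
\omega_{f\circ\phi}(\delta)\ \ge\ \operatorname{osc}\bigl(f,\phi(J)\bigr)\ \ge\ \mu_f(\delta).
\]
But there is a continuous $f$ with $\mu_f(\delta)\log(1/\delta)$ bounded away from $0$ along a sequence $\delta\to 0$. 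Take $f=\sum_{k\ge1}2^{-k}g_k$, where $g_k:\mathbb{T}\to[0,1]$ is the $2^{-2^k}$-periodic tent function. Any interval $I$ with $|I|=2^{-2^k}$ contains a point $x_2\in2^{-2^k}\mathbb{Z}$, which is a common zero of $g_j$ for every $j\ge k$, and a point $x_1$ with $g_k(x_1)=1$; the terms with $j<k$ vary on $I$ by only $O\bigl(2^{-2^{k-1}}\bigr)$, so for $k$ large $\operatorname{osc}(f,I)\ge f(x_1)-f(x_2)\ge2^{-k-1}$. Hence $\mu_f(2^{-2^k})\cdot\log_2\bigl(2^{2^k}\bigr)\ge2^{-k-1}\cdot 2^k=\tfrac12$, so $f\circ\phi$ fails Dini--Lipschitz for every $\phi$. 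For this $f$ your sets $B_n$ are all of $\mathbb{T}$ along $n=2^k$, and the compress-the-bad-set scheme collapses: there is nothing left to speed up on, and the two-parameter optimisation you describe has no solution.

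This obstruction is exactly why the paper avoids any modulus-of-continuity criterion: uniform convergence of $S_r(f\circ\phi)$ must come from oscillatory cancellation in $\int(f\circ\phi)D_r$, not from pointwise smoothness of $f\circ\phi$. The paper's proof is probabilistic. A Dubins--Freedman--style random homeomorphism is calibrated so that the random step at each dyadic scale is governed by a local square function of $f$, whose exponential tail (via dyadic John--Nirenberg, lemma \ref{lem:z Olv}) is what controls $\phi'$ in $L^p$. One then proves that the \emph{averaged} partial sums $\mathbb{E}\int(f\circ\phi)D_r$ behave well, and removes the randomness one dyadic bit at a time using a Kashin-type sign-selection lemma (lemma \ref{lem:renormalisation}), preserving the bound at every stage. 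That is an essentially different mechanism from slowing down on bad sets, and no amount of tuning $\lambda_n$, $\eta_n$, or the refinement subsequence rescues the Dini--Lipschitz route.
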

We remark that Kahane and Katznelson showed that this cannot be done
if we require also that $\phi\in C^{1}$ \cite[example 5]{KK83}.
Thus an interesting gap remains between our results and those of \cite{KK83}:
is is possible to do the same with a Lipschitz homeomorphism?

We should also mention that certain kinds of improvements in the behaviour
of the Fourier transform are impossible to achieve using absolutely
continuous homeomorphisms. For example, there exists an $f\in C(\mathbb{T})$
such that $\widehat{f\circ\phi}\not\in l_{p}$ for any $1\le p<2$
and any absolutely continuous homeomorphism $\phi$ \cite{O85}. This
result also implies that it would be difficult to answer Luzin's conjecture
using a version of the original construction of P\'al and Bohr (e.g.\ replacing
conformal maps with quasi-conformal ones), as that construction gives
an $H^{1/2}$ function (see below, \S\ref{sec:The-Pal-Bohr-theorem}),
and any $g\in H^{1/2}$ has $\widehat{g}\in l^{p}$ for all $p>1$. 

The paper is organised as follows. In \S \ref{sec:The-Pal-Bohr-theorem}
we prove the P\'al-Bohr theorem. In \S \ref{sec:A-proof-sketch}
we sketch an easier result: that for any continuous function $f$
there exists a H\"older homeomorphism $\varphi$ such that $f\circ\varphi$
has a uniformly bounded Fourier series. This sketch covers about two
thirds of the ideas needed for the full result and we believe it will
be beneficial to the reader. The remainder of the paper contains the
proof of theorem \ref{thm:main}.

\section{\label{sec:The-Pal-Bohr-theorem}The P\'al-Bohr theorem}

Recall that we wish to show that for every real-valued continuous
function $f$ on $\mathbb{T}$ there exists a homeomorphism $\varphi$
such that $f\circ\varphi$ has a uniformly converging Fourier series.
Clearly we may assume $f>0$, by adding a constant if necessary, so
let us do so. Let $\Omega$ be the domain delimited by $f$ in polar
coordinates, namely
\[
\Omega=\{re^{i\theta}:r<f(\theta)\}.
\]
Let $u$ be a conformal map of the disk $\mathbb{D}$ onto $\Omega$
taking 0 to 0. By Caratheo\-dory's theorem \cite[\S I.3]{GM05},
$u$ extends to a one-to-one map of $\partial\mathbb{D}$ onto $\partial\Omega$
and hence $g(t)\coloneqq|u(e^{it})|$ is a change of variables of
$f$ (in other words, $g=f\circ\varphi$). We claim that $g$ has
a uniformly converging Fourier expansion. To see this write $u(z)=\sum c_{k}z^{k}$
and get
\begin{align*}
\iint_{\mathbb{D}}|u'(z)|^{2}\,dz & =\iint_{\mathbb{D}}\bigg|\sum_{k=1}^{\infty}kc_{k}z^{k-1}\bigg|^{2}\,dz\\
 & =\int_{0}^{1}\int_{0}^{2\pi}\bigg|\sum_{k=1}^{\infty}kc_{k}r^{k-1}e^{i(k-1)\theta)}\bigg|^{2}r\,d\theta\,dr\\
\textrm{by Parseval}\qquad & =2\pi\int_{0}^{1}\sum_{k=1}^{\infty}k^{2}|c_{k}|^{2}r^{2k-1}\,dr=\pi\sum_{k=1}^{\infty}k|c_{k}|^{2}.
\end{align*}
Since $\iint_{\mathbb{D}}|u'(z)|^{2}\,dz$ is the area of $\Omega$
and is finite, we get that $g\in H^{1/2}$, the Sobolev space of functions
whose $\frac{1}{2}$-derivative is in $L^{2}$. The theorem is finished
by the following three observations. First, that $H^{1/2}\cap C$
is a Banach algebra (lemma \ref{lem:Banach algebra} below). Hence
$|g|$ is also in $H^{1/2}$ (lemma \ref{lem:ginA}). A continuous,
$H^{1/2}$ function has a uniformly converging Fourier expansion (lemma
\ref{lem:A implies uniform convergence}).\qed
\begin{lem}
\label{lem:Banach algebra}The space $H^{1/2}\cap C$, that is, the
space of all continuous $f:[0,2\pi]\to\mathbb{C}$ with $\sum|k||\widehat{f}(k)|^{2}<\infty$
equipped with the norm $\max\{\Vert f\Vert_{\infty},\linebreak[4](\sum|k||\widehat{f}(k)|^{2})^{1/2}\}$
is a Banach algebra with respect to pointwise multiplication.
\end{lem}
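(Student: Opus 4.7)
The plan is to replace the Fourier description of the $H^{1/2}$ seminorm by the equivalent Gagliardo (Slobodeckij) description on the circle, for which the algebra property is almost automatic. First I would verify that there is an absolute constant $c>0$ with
\[
\iint_{\mathbb{T}^{2}}\frac{|f(x)-f(y)|^{2}}{|e^{ix}-e^{iy}|^{2}}\,dx\,dy\;=\;c\sum_{k\in\mathbb{Z}}|k|\,|\widehat{f}(k)|^{2}.
\]
This is a routine Parseval computation: write $f(x)-f(y)$ in Fourier series, apply Parseval in $x$, change variables $t=x-y$ in the remaining integral, and invoke the classical identity $\int_{0}^{2\pi}\sin^{2}(kt/2)/\sin^{2}(t/2)\,dt=2\pi|k|$.

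With this in hand, for $f,g\in H^{1/2}\cap C$ I would use the pointwise splitting
\[
(fg)(x)-(fg)(y)=f(x)\bigl(g(x)-g(y)\bigr)+\bigl(f(x)-f(y)\bigr)g(y),
\]
square, apply $(a+b)^{2}\le 2a^{2}+2b^{2}$, divide by $|e^{ix}-e^{iy}|^{2}$ and integrate. Pulling out the $L^{\infty}$ norms and using the identity backwards gives
\[
\sum_{k}|k|\,|\widehat{fg}(k)|^{2}\;\lesssim\;\|f\|_{\infty}^{2}\sum_{k}|k|\,|\widehat{g}(k)|^{2}+\|g\|_{\infty}^{2}\sum_{k}|k|\,|\widehat{f}(k)|^{2}.
\]
Combined with $\|fg\|_{\infty}\le\|f\|_{\infty}\|g\|_{\infty}$, this yields $\|fg\|\le C\|f\|\,\|g\|$ for the norm stated in the lemma, with $C$ an absolute constant; rescaling by $C$ produces an equivalent, genuinely submultiplicative norm.

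Completeness is routine: a sequence Cauchy in the max norm is Cauchy both in $C(\mathbb{T})$ and in the Hilbert norm $\bigl(\sum|k|\,|\widehat{f}(k)|^{2}\bigr)^{1/2}$, and uniform convergence forces the two limits to agree at the level of Fourier coefficients. The only substantive step is the Gagliardo identity; once it is available, the factorisation of $(fg)(x)-(fg)(y)$ does the rest and makes transparent where the $L^{\infty}$ hypothesis is used. I do not expect a clean purely Fourier-side proof: the tempting inequality $|k|^{1/2}\le|j|^{1/2}+|k-j|^{1/2}$ reduces matters to a Young-type bound $\ell^{2}*\ell^{1}\to\ell^{2}$, which is useless here because $(\widehat{g}(k))_{k}$ need not lie in $\ell^{1}$. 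The real-variable detour through the Gagliardo seminorm is precisely what bypasses this obstruction, and this is the step I regard as the main (though standard) hurdle.
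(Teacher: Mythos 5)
Your proof is correct, and it takes a genuinely different route from the paper's. The paper works entirely on the Fourier side with a Littlewood--Paley decomposition: it introduces smoothed dyadic spectral projections $P_{k}$, splits the product $fg$ as $\sum_{k}P_{k}f\sum_{l\le k}P_{l}g$ plus the symmetric sum, and exploits that $\sum_{l\le k}P_{l}$ is a de la Vall\'ee Poussin kernel with $L^{\infty}$-operator norm $\le 3$; this is where the $\Vert g\Vert_{\infty}$ factor enters. Your argument instead transfers the $H^{1/2}$ seminorm to the Gagliardo form
\[
\iint_{\mathbb{T}^{2}}\frac{|f(x)-f(y)|^{2}}{|e^{ix}-e^{iy}|^{2}}\,dx\,dy\;\asymp\;\sum_{k}|k|\,|\widehat{f}(k)|^{2}
\]
(which does indeed reduce to Parseval plus $\int_{0}^{2\pi}\sin^{2}(kt/2)/\sin^{2}(t/2)\,dt=2\pi|k|$, the Fej\'er kernel normalisation), after which the pointwise splitting $(fg)(x)-(fg)(y)=f(x)(g(x)-g(y))+(f(x)-f(y))g(y)$ and $(a+b)^{2}\le 2a^{2}+2b^{2}$ make the submultiplicativity transparent. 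Both proofs insert the $L^{\infty}$ bound exactly where the naive convolution estimate (via $|k|^{1/2}\le|j|^{1/2}+|k-j|^{1/2}$ and Young's inequality) breaks down, and your remark on that failure is apt. Your real-variable route is shorter and arguably more conceptual for an isolated reader; the paper's Littlewood--Paley version fits the harmonic-analysis toolkit it uses elsewhere and generalises more readily to other $H^{s}\cap L^{\infty}$ algebras where the Gagliardo form is less pleasant. Your completeness argument is also correct: Cauchy in the max norm gives uniform convergence and convergence of each $\widehat{f}_{n}(k)$, so the $H^{1/2}$ part of the norm passes to the limit by Fatou.
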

\begin{proof}
Since $\Vert fg\Vert_{\infty}\le\Vert f\Vert_{\infty}\Vert g\Vert_{\infty}$,
we need to show that $\Vert fg\Vert_{H^{1/2}}\le C\Vert f\Vert_A\cdot\Vert g\Vert_A$
for some absolute constant $C$, where $\Vert f\Vert_{A}\coloneqq\max\{\Vert f\Vert_{\infty},\Vert f\Vert_{H^{1/2}}\}$
and $\Vert f\Vert_{H^{1/2}}^{2}=\sum_{k=-\infty}^{\infty}|k||\widehat{f}(k)|^{2}$.
For $k\ge1$ let $P_{k}$ be the following smoothed spectral projections,
\[
(P_{k}f)(t)=\sum_{j=-\infty}^{\infty}\tau_{k}(j)\widehat{f}(j)e^{ijt}\qquad\tau_{k}(j)=\begin{cases}
|j|2^{1-k}-1 & 2^{k-1}<|j|\le2^{k}\\
2-|j|2^{-k} & 2^{k}<|j|\le2^{k+1}\\
0 & \text{othewise,}
\end{cases}
\]
and let $P_{0}f=\sum_{|j|\le1}\widehat{f}(j)e^{ijt}$ so that $f=\sum_{k=0}^{\infty}P_{k}f.$
We get 
\begin{align*}
\Vert fg\Vert_{H^{1/2}} & =\left\Vert \bigg(\sum_{k=0}^{\infty}P_{k}f\bigg)\bigg(\sum_{k=0}^{\infty}P_{k}g\bigg)\right\Vert _{H^{1/2}}\\
 & \le\sum_{k=0}^{\infty}\bigg\Vert P_{k}f\sum_{l=0}^{k}P_{l}g\bigg\Vert_{H^{1/2}}+\sum_{l=0}^{\infty}\bigg\Vert P_{l}g\sum_{k=0}^{l-1}P_{k}f\bigg\Vert_{H^{1/2}}.
\end{align*}
For each of the terms we can write
\begin{align*}
\bigg\Vert P_{k}f\sum_{l=0}^{k}P_{l}g\bigg\Vert_{H^{1/2}} & \le2^{(k+2)/2}\bigg\Vert P_{k}f\sum_{l=0}^{k}P_{l}g\bigg\Vert_{2}\\
 & \le2^{(k+2)/2}\Vert P_{k}f\Vert_{2}\bigg\Vert\sum_{l=0}^{k}P_{l}g\bigg\Vert_{\infty}\le C2^{k/2}\Vert P_{k}f\Vert_{2}\Vert g\Vert_{\infty}
\end{align*}
where the last inequality follows because $\sum_{l=0}^{k}P_{l}$ is
a de la Vall\'ee Poussin kernel, and in particular is the difference
of two Fej\'er kernel, hence has norm at most 3 on $L^{\infty}$.
An identical calculation holds for the terms $\Vert P_{l}g\sum P_{k}f\Vert$
and hence
\begin{align*}
\Vert fg\Vert_{H^{1/2}} & \le C\Vert g\Vert_{\infty}\sum_{k=0}^{\infty}2^{k/2}\Vert P_{k}f\Vert_{2}+C\Vert f\Vert_{\infty}\sum_{l=0}^{\infty}2^{l/2}\Vert P_{l}g\Vert_{2}\\
 & \le C(\Vert g\Vert_{\infty}\Vert f\Vert_{H^{1/2}}+\Vert f\Vert_{\infty}\Vert g\Vert_{H^{1/2}})\le C\Vert f\Vert_{A}\Vert g\Vert_{A},
\end{align*}
showing the Banach algebra property, as needed.
\end{proof}
\begin{lem}
If $f\in H^{1/2}\cap C$ and if $z$ is a function analytic in a neighbourhood
of $\{f(x):x\in\mathbb{T}\}$ then $z\circ f\in H^{1/2}\cap C$.
\end{lem}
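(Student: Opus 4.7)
The plan is to reduce the lemma to the Gagliardo/Douglas characterization of the $H^{1/2}$ semi-norm on the circle,
\[
\sum_{k\in\mathbb{Z}} |k|\,|\widehat{g}(k)|^{2}
\;\asymp\;
\iint_{\mathbb{T}\times\mathbb{T}}
\frac{|g(x)-g(y)|^{2}}{|e^{ix}-e^{iy}|^{2}}\,dx\,dy,
\]
an equivalence which, incidentally, underlies the very Dirichlet-integral identity already exploited in the P\'al-Bohr argument of \S\ref{sec:The-Pal-Bohr-theorem}. I would take this classical equivalence as the only auxiliary input beyond the hypotheses and the preceding lemma.

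Given it, the argument is short. Set $K:=f(\mathbb{T})$; since $f$ is continuous, $K$ is a compact subset of the open set on which $z$ is analytic. Consequently $z$ is bounded on $K$ by some $M$ and Lipschitz on $K$ with some constant $L$. The composition $z\circ f$ is certainly continuous with $\|z\circ f\|_{\infty}\le M$, so only the $H^{1/2}$ estimate is in question. Starting from
\[
|(z\circ f)(x)-(z\circ f)(y)|\;\le\; L\,|f(x)-f(y)|\qquad(x,y\in\mathbb{T}),
\]
I square, divide by $|e^{ix}-e^{iy}|^{2}$, and integrate to find that the Gagliardo form of $z\circ f$ is at most $L$ times that of $f$, and hence finite. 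Transferring back through the equivalence above yields $z\circ f\in H^{1/2}\cap C$, together with the quantitative bound $\|z\circ f\|_{H^{1/2}}\lesssim L\,\|f\|_{H^{1/2}}$.

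The only non-trivial step is the appeal to the Gagliardo/Douglas equivalence. A completely Fourier-side alternative would be to represent $z\circ f$ by a Cauchy integral, $z(f(x)) = (2\pi i)^{-1}\oint_{\Gamma}z(\zeta)(\zeta-f(x))^{-1}\,d\zeta$, around a cycle $\Gamma\subset\mathbb{C}\setminus K$ with winding number one about every point of $K$, and to read the integral in the Bochner sense in $H^{1/2}\cap C$. That route would reduce the lemma to uniform bounds on $(\zeta-f)^{-1}$ in $H^{1/2}\cap C$ for $\zeta\in\Gamma$, i.e.\ to the inverse-closedness of $H^{1/2}\cap C$ inside $C(\mathbb{T})$, which could then be combined with lemma~\ref{lem:Banach algebra}. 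But the cleanest proof of that inverse-closedness again goes through the Gagliardo form, so I expect the main analytic content to be the same in either approach.
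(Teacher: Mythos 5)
Your argument is correct, and it takes a genuinely different route from the paper's. The paper follows Wiener's classical method: using lemma~\ref{lem:Banach algebra} it first shows that membership in $A=H^{1/2}\cap C$ is a \emph{local} property (via a partition of unity in $A$), and then near each point of $\mathbb{T}$ it localizes $f$ with cutoffs $\rho_n$, replaces $f$ by a trigonometric polynomial plus a small $A$-norm remainder, and sums the Taylor series of $z$ about $f(x_0)$ inside the Banach algebra $A$. Your proof instead invokes the Gagliardo/Douglas double-integral characterization of the $H^{1/2}$ seminorm and reduces the whole lemma to the pointwise inequality $|z(f(x))-z(f(y))|\le L|f(x)-f(y)|$, where $L$ is a Lipschitz constant for $z$ on the compact set $K=f(\mathbb{T})$. (This Lipschitz bound holds, but deserves a line: choose $d=\dist(K,\partial U)>0$ where $z$ is analytic on $U$; then $|z'|\le M$ on the compact $d/2$-neighbourhood of $K$, so $|z(w_1)-z(w_2)|\le M|w_1-w_2|$ when $|w_1-w_2|<d/2$, while the far-apart case is handled by boundedness of $z$ on $K$.) The Gagliardo approach is shorter and yields the explicit bound $\|z\circ f\|_{H^{1/2}}\le CL\|f\|_{H^{1/2}}$; it also reveals that analyticity of $z$ is overkill --- local Lipschitz continuity on a neighbourhood of $K$ would already do, which is strictly more general than what Wiener's Taylor-series method gives. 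Its cost is that it imports the Gagliardo equivalence as an external fact, whereas the paper's proof is self-contained given lemma~\ref{lem:Banach algebra} and works entirely on the Fourier side. One very small quibble: after squaring you should say the Gagliardo form of $z\circ f$ is at most $L^2$ (not $L$) times that of $f$; the final seminorm bound you state is correct.
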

\begin{proof}
We follow Wiener's proof of the same fact for Wiener's algebra. We
first note that being in the algebra $A=H^{1/2}\cap C$ is a local
property, namely if some $f:\mathbb{T}\to\mathbb{C}$ satisfies that
for every $x$ there is an $\epsilon>0$ and a function $a_{x}\in A$
such that $a_{x}|_{(x-\epsilon,x+\epsilon)}=f|_{(x-\epsilon,x+\epsilon)}$
then it follows that $f\in A$. Indeed, we take a finite cover of
$\mathbb{T}$ by intervals $(x_{1}-\epsilon_{1},x_{1}+\epsilon_{1}),\dotsc,(x_{n}-\epsilon_{n},x_{n}+\epsilon_{n})$
and take some corresponding partition of unity $p_{1},\dotsc,p_{n}$
with $p_{i}\in A$ then $f=fp_{1}+\dotsc+fp_{n}=a_{x_{1}}p_{1}+\dotsc+a_{x_{n}}p_{n}\in A$
(recall that $A$ is a Banach algebra, by lemma \ref{lem:Banach algebra}).
This shows locality, and it is therefore enough to show that for every
$x$ there exists some $\epsilon>0$ and $a\in A$ such that $a|_{(x-\epsilon,x+\epsilon)}=z\circ f|_{x}$.
By translation invariance we may assume that $x=0$ and by invariance
to addition of constants we may assume that $f(0)=0$.

We next find some functions $\rho_{n}\in A$ such that $\supp\rho_{n}\subseteq[-2/n,2/n]$,
$\rho_{n}\equiv1$ on $[-1/n,1/n]$, $\Vert\rho_{n}\Vert_{A}\le C$
and 
\[
\lim_{n\to\infty}\Vert\rho_{n}(1-e^{itk})\Vert_{A}=0\qquad\forall k\in\mathbb{Z}.
\]
(trapezoidal functions satisfy all these properties, for example).
We fix some $\delta>0$ and write $f=f_{1}+f_{2}$ where $f_{1}$
is a trigonometric polynomial with $|f_{1}(0)|<\delta$ and $\Vert f_{2}\Vert_{A}<\delta$
(for example we can take $f_{1}$ to be a Ces\`aro partial sum of
$f$). We then have
\begin{align*}
\Vert f_{2}\rho_{n}\Vert_{A} & \le C\Vert f_{2}\Vert_{A}\Vert\rho_{n}\Vert_{A}\le C\delta\\
\Vert f_{1}\rho_{n}\Vert_{A} & \le\bigg|\sum_{k}\widehat{f_{1}}(k)\bigg|\Vert\rho_{n}\Vert_{A}+\sum_{k}|\widehat{f_{1}}(k)|\cdot\Vert(1-e^{itk})\rho_{n}\Vert_{A}\\
 & \le C\delta+\sum_{k}|\widehat{f_{1}}(k)|\cdot o(1).
\end{align*}
Hence for $n$ sufficiently large (depending on $\delta$, of course),
we have $\Vert f\rho_{n}\Vert_{A}\le C\delta$. Using the Taylor expansion
of $z$ near zero we can write
\[
z(f\rho_{n})=\sum_{j=1}^{\infty}\frac{z^{(j)}(0)}{j!}(f\rho_{n})^{j}
\]
and since $\Vert(f\rho_{n})^{k}\Vert_{A}\le C^{k}\Vert f\rho_{n}\Vert_{A}^{k}\le(C\delta)^{k}$
we can make the sum converge in $A$ by taking $\delta$ sufficiently
small. This shows that locally $z\circ f\in A$ and proves the lemma.
\end{proof}
\begin{lem}
\label{lem:ginA}$|g|\in H^{1/2}\cap C$.
\end{lem}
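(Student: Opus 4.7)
The plan is to bypass the non-analyticity of $|\cdot|$ by factoring $|u(e^{it})|^{2}=u(e^{it})\overline{u(e^{it})}$. The area computation preceding the lemma established that $\sum_{k}k|c_{k}|^{2}<\infty$, which says exactly that the complex-valued boundary function $t\mapsto u(e^{it})=\sum_{k}c_{k}e^{ikt}$ lies in $H^{1/2}(\mathbb{T})\cap C(\mathbb{T})$. The complex conjugate $\overline{u(e^{it})}=\sum_{k}\overline{c_{k}}e^{-ikt}$ has Fourier coefficients of the same moduli (reindexed by $k\mapsto-k$), hence it also belongs to $H^{1/2}\cap C$.

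Now I apply lemma \ref{lem:Banach algebra}: the pointwise product $h(t)\coloneqq u(e^{it})\cdot\overline{u(e^{it})}=|u(e^{it})|^{2}$ is a real-valued element of $H^{1/2}\cap C$. Next I check that $h$ is bounded away from $0$. Since $f>0$ is continuous on the compact circle, $m\coloneqq\min f>0$, and since $\Omega$ is a Jordan domain containing the origin in its interior, Caratheodory's theorem identifies $\partial\mathbb{D}$ bijectively with $\partial\Omega=\{f(\theta)e^{i\theta}\}$, forcing $|u(e^{it})|\geq m$. Hence $h$ takes values in the compact interval $[m^{2},(\max f)^{2}]\subset(0,\infty)$, on which the principal branch of $w\mapsto\sqrt{w}$ is holomorphic in a simply connected neighbourhood avoiding the origin.

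Invoking the preceding composition lemma with this branch of $\sqrt{\cdot}$ in place of $z$, one obtains $|u(e^{it})|=\sqrt{h}\in H^{1/2}\cap C$, which is precisely the conclusion of the lemma. The only conceptual step is the replacement of the non-analytic operation $|\cdot|$ by the analytic operation $w\mapsto\sqrt{w\bar{w}}$; this substitution is legitimate precisely because of the Banach algebra structure (for the multiplication $u\cdot\bar{u}$) together with the positive lower bound on $|u(e^{it})|$, which lets us choose a single-valued holomorphic branch of the square root on a neighbourhood of the range of $h$. I do not expect any genuine technical obstacle beyond these observations.
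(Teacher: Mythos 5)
Your proposal is correct and follows essentially the same route as the paper: reduce to $|u|^{2}\in A$ via the Banach algebra property (your $u\bar u$ is literally $\mathrm{Re}(u)^{2}+\mathrm{Im}(u)^{2}$, which is the paper's phrasing), then take the analytic square root via the composition lemma, using that $|u|$ is bounded away from zero on $\partial\mathbb{D}$. The only cosmetic difference is that you spell out why $|u(e^{it})|\ge\min f>0$, which the paper leaves implicit.
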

\begin{proof}
By lemma \ref{lem:Banach algebra} $A=H^{1/2}\cap C$ is a Banach
algebra and since we already know $g\in A$ we get that $|g|^{2}=\textrm{Re}(g)^{2}+\textrm{Im}(g)^{2}\in A$.
Since $|g|^{2}>0$ and since $\sqrt{\cdot}$ is analytic in the neighbourhood
of any interval $[\epsilon,1/\epsilon]$, the previous theorem shows
that $|g|=\sqrt{|g|^{2}}\in A$.
\end{proof}
\begin{lem}
\label{lem:A implies uniform convergence}If $f\in H^{1/2}\cap C$
then $f$ has a uniformly converging Fourier expansion
\end{lem}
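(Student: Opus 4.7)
I would prove this by comparing the Dirichlet partial sums $S_N f$ with the de la Vall\'ee Poussin smoothings $V_N*f$, where $V_N$ is the trigonometric polynomial whose Fourier coefficients equal $1$ for $|k|\le N$, vanish for $|k|\ge 2N$, and interpolate linearly in between. As noted in the proof of Lemma~\ref{lem:Banach algebra}, $V_N$ is a difference of two Fej\'er kernels, so $\|V_N\|_{L^1}\le 3$. The plan is to decompose
\[
f - S_N f = (f - V_N*f) + (V_N*f - S_N f)
\]
and bound the two summands separately.

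The first summand tends to $0$ uniformly by Fej\'er's theorem, since $V_N$ is a linear combination of two Fej\'er kernels and $f\in C(\mathbb{T})$ (this is the only place where the continuity of $f$ is used). For the second summand, the key observation is that $V_N*f - S_N f$ is a trigonometric polynomial whose Fourier support lies in the ring $\{N<|k|\le 2N\}$, containing at most $2N$ nonzero coefficients. By Cauchy--Schwarz,
\[
\|V_N*f - S_N f\|_\infty \le \sum_{N<|k|\le 2N}|\widehat{f}(k)| \le \sqrt{2N}\,\Bigl(\sum_{|k|>N}|\widehat{f}(k)|^2\Bigr)^{1/2}.
\]
Since $|k|\ge N$ throughout the tail, the last $\ell^2$-sum is at most $N^{-1}\sum_{|k|>N}|k|\,|\widehat{f}(k)|^2$, so the whole estimate collapses to $\sqrt{2}\,\bigl(\sum_{|k|>N}|k|\,|\widehat{f}(k)|^2\bigr)^{1/2}$, which vanishes as $N\to\infty$ because $f\in H^{1/2}$ makes the series $\sum|k|\,|\widehat{f}(k)|^2$ convergent.

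Combining the two estimates yields $\|f-S_N f\|_\infty\to 0$, the claimed uniform convergence. I do not expect any serious obstacle in this argument: the substance is the clean balance between the factor $\sqrt{N}$ coming from the width of the Fourier support of the polynomial correction $V_N*f-S_N f$ and the factor $N^{-1/2}$ produced by pulling out a weight $|k|$ from the tail, which is exactly the critical scaling at Sobolev index $1/2$.
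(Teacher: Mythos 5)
Your argument is correct, and it takes a somewhat different route from the paper's. The paper compares $S_n$ with the Fej\'er mean $F_n$, writes $S_n-F_n=\sum_{|k|\le n}\frac{|k|}{n}\widehat f(k)e^{ikx}$, and applies Cauchy--Schwarz with the weights $\sqrt{|k|}$ to obtain the \emph{uniform bound} $\|S_n f\|_\infty\le C\|f\|_A$; convergence is then deduced separately, by density of trigonometric polynomials in $H^{1/2}\cap C$ (writing $f=f_1+f_2$ with $\|f_2\|_A<\epsilon$). You instead compare with the de la Vall\'ee Poussin mean, whose key virtue is that $V_N*f-S_Nf$ is supported on the single dyadic band $N<|k|<2N$; the same Cauchy--Schwarz balance at the critical Sobolev exponent then produces the tail quantity $\bigl(\sum_{|k|>N}|k||\widehat f(k)|^2\bigr)^{1/2}$, which tends to zero \emph{directly}, so no density step is needed. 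Both proofs hinge on exactly the same scaling ($\sqrt N$ against $N^{-1/2}$), but your choice of kernel localizes the error polynomial to high frequencies where the $H^{1/2}$ tail vanishes, converting the paper's two-step boundedness-plus-density argument into a one-step convergence estimate. The only place continuity of $f$ is used is, as you say, the Fej\'er--theorem step $V_N*f\to f$ uniformly; everything is in order.
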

\begin{proof}
By a standard approximation argument, it is enough to show that 
\begin{equation}
\Vert S_{n}(f)\Vert_{\infty}\le C\Vert f\Vert_{A}\label{eq:kol yeled im doktorat}
\end{equation}
for some universal constant $C$, where $S_{n}(f)$ is the $n^{\textrm{th}}$
Fourier partial sum. Indeed, once (\ref{eq:kol yeled im doktorat})
is established we may write $f=f_{1}+f_{2}$ with $f_{1}$ a trigonometric
polynomial and $\Vert f_{2}\Vert_{A}<\epsilon$ and get 
\[
\varlimsup\Vert f-S_{n}(f)\Vert_{\infty}=\varlimsup\Vert f_{2}-S_{n}(f_{2})\Vert_{\infty}<\epsilon+C\epsilon.
\]
Since $\epsilon>0$ was arbitrary, the lemma is proved. So we need
only demonstrate (\ref{eq:kol yeled im doktorat}).

To prove (\ref{eq:kol yeled im doktorat}) we note that $F_{n}$,
the $n^{\textrm{th}}$ Ces\`aro partial sum of $f$ satisfies $\Vert F_{n}\Vert_{\infty}\le\Vert f\Vert_{\infty}\le\Vert f\Vert_{A}$
so it is enough to prove the claim for $S_{n}-F_{n}$. We write
\begin{align*}
\Vert S_{n}-F_{n}\Vert_{\infty} & =\bigg\Vert\sum_{|k|\le n}\frac{|k|}{n}\widehat{f}(k)e^{ikx}\bigg\Vert_{\infty}=\frac{1}{n}\max_{x}\bigg|\sum_{|k|\le n}\sqrt{|k|}\cdot\sqrt{|k|}\widehat{f}(k)e^{ikx}\bigg|\\
 & \stackrel{(*)}{\le}\frac{1}{n}\sqrt{\sum_{|k|\le n}|k|}\Vert f\Vert_{H^{1/2}}\le C\Vert f\Vert_{H^{1/2}}
\end{align*}
where $(*)$ is from Cauchy-Schwarz. The lemma is proved, and so is
the P\'al-Bohr theorem.
\end{proof}

\section{\label{sec:A-proof-sketch}A proof sketch}

The proof of theorem \ref{thm:main} is probabilistic. Let us compare
the approach of this paper to the approach of our older paper \cite{KO98}.
There we investigated the Dubins-Freedman homeomorphism $\phi$ (see
below for more details) and showed that the Fourier series of $f\circ\phi$
is better behaved than that of $f$, but not quite uniformly converging
or even bounded. Here we again start with a Dubins-Freedman-like homeomorphism,
but this time we do not investigate its almost sure behaviour (as
in \cite{KO98}) but its average behaviour. We then start reducing
the randomness step by step, always keeping the same averaged behaviour,
until reaching a degenerate random variable, and that is the required
homeomorphism.

In this section we are going to sketch a proof of an easier result:
that for every continuous function $f$ there is a H\"older homeomorphism
$\varphi$ such that $f\circ\varphi$ has a uniformly bounded Fourier
expansion. This result is weaker than our main result in two aspects:
the homeomorphism is only H\"older rather than absolutely continuous
(this is the more important difference); and the Fourier series is
uniformly bounded rather than uniformly converging. After the sketch
we will briefly describe what additional ideas are needed to get the
main result (whose proof, of course, occupies the rest of the paper).
\begin{lem}
\label{lem:renormalisation simplified}Let $\gamma>0$ and $M$ be
two parameters. Assume $(v_{i,j}:i\in\{1,\dotsc,n\},j\in\mathbb{N})$
satisfy that for every $j$ there is an $l(j)\in\{1,\dotsc,n\}$ and
a $b(j)\in\mathbb{N}$ such that
\[
|v_{i,j}|\le\min\Big\{\frac{1}{|(i-l(j))\bmod n|+1},\frac{1}{b(j)}\Big\}
\]
and assume further that $|\{j:l(j)=l,b(j)=b\}|\le Mb^{\gamma}$ for
all $l$ and $b$. Then there exists a choice of signs $\epsilon_{1},\dotsc,\epsilon_{n}\in\{\pm1\}$
such that 
\[
\bigg|\sum_{i=1}^{n}\epsilon_{i}v_{i,j}\bigg|\le\frac{C}{b(j)^{1/50}}\qquad\forall j,
\]
where $C$ is a constant that depends only on $\gamma$ and $M$.

Here and below $\bmod n$ means a number in $\{-\lfloor(n-1)/2\rfloor,\dotsc,\lfloor n/2\rfloor\}$.
\end{lem}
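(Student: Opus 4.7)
The plan is probabilistic: take $\epsilon_1,\dots,\epsilon_n$ i.i.d.\ uniformly distributed on $\{\pm1\}$ and show the conclusion holds with positive probability. The first step is a routine variance estimate. For each $j$, writing $k(i)=(i-l(j))\bmod n$,
\[
\sum_{i=1}^n v_{i,j}^2 \;\le\; \sum_{|k|<b(j)}\frac{1}{b(j)^2}\;+\;\sum_{|k|\ge b(j)}\frac{1}{(|k|+1)^2} \;\le\; \frac{C_0}{b(j)}.
\]
Combined with the pointwise bound $|\epsilon_i v_{i,j}|\le 1/b(j)$, Hoeffding's inequality gives
\[
\mathbb{P}(|X_j|>t) \;\le\; 2\exp(-c_0 t^2 b(j)), \qquad X_j:=\sum_{i=1}^n \epsilon_i v_{i,j},
\]
so at $t=C/b(j)^{1/50}$ the individual failure probability is at most $2\exp(-c_0 C^2 b(j)^{24/25})$.

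The heart of the argument is turning this individual estimate into a simultaneous one. Grouping by $(l(j),b(j))$ and using the cardinality hypothesis,
\[
\sum_{j} \mathbb{P}\bigl(|X_j|>C/b(j)^{1/50}\bigr) \;\le\; \sum_{l=1}^n\sum_{b=1}^\infty Mb^\gamma\cdot 2\exp(-c_0 C^2 b^{24/25}).
\]
The inner sum in $b$ is arbitrarily small for $C$ large, independently of $n$, but the factor $n$ coming from the sum over $l$ spoils the bound---so the naive union bound only gives $C\gtrsim\sqrt{\log n}$, which is not enough. To recover a constant $C$ I would exploit the locality of the $v_{i,j}$ around $l(j)$. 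Split $X_j=X_j^{\mathrm{loc}}+X_j^{\mathrm{tail}}$, where $X_j^{\mathrm{loc}}$ gathers the indices $i$ with $|k(i)|$ at most some moderate power of $b(j)$. The tail contribution has variance negligible compared to $1/b(j)$, and its bound is therefore controlled by a simple union bound (the factor $n$ being swallowed easily by the extra decay). For $X_j^{\mathrm{loc}}$, the events associated to centers $l(j)$ that are widely separated depend on disjoint blocks of signs and are therefore independent; a Lov\'asz-local-lemma-style argument on the dependency graph---whose degrees are controlled by the cardinality hypothesis applied within a neighborhood whose size is dictated by the localization scale---completes the argument.

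The main obstacle is calibrating the scale of localization so that both the union bound for the tail and the dependency argument for the local part go through with constants depending only on $\gamma$ and $M$. The exponent $1/50$ leaves ample slack: the ratio between the target $1/b(j)^{1/50}$ and the typical magnitude $1/\sqrt{b(j)}$ is $b(j)^{24/50}$, so the probability bound decays like $\exp(-b(j)^{24/25})$, which is more than enough to absorb the polylogarithmic overheads from the localization. In fact any exponent strictly less than $1/2$ should work by this scheme; the specific value $1/50$ is presumably fixed by the downstream application in the proof of Theorem~\ref{thm:main}.
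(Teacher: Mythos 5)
The core difficulty you identify — that a naive union bound costs a factor $n$ and hence a constant $\gtrsim\sqrt{\log n}$ — is exactly the obstruction, and the localization idea is natural. But the way you propose to dispose of the tail does not actually work, and this is a genuine gap, not a matter of unspelled detail. Write $R(j)$ for your localization scale, so $X_j^{\mathrm{tail}}$ sums over indices $i$ with $|k(i)|>R(j)$. Then $\operatorname{Var}(X_j^{\mathrm{tail}})\lesssim 1/R(j)$, which gives $\mathbb{P}(|X_j^{\mathrm{tail}}|>t)\le 2\exp(-c\,t^2 R(j))$. For this to be a genuine improvement over the full sum you need $R(j)\gg b(j)$, but for small $b(j)$ (say $b(j)=1$) any scale $R(j)=b(j)^{\alpha}$ is $O(1)$, and the tail failure probability at the required threshold $t=C$ is only $\exp(-cC^2)$ — \emph{independent of} $n$. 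Since there are up to $Mn$ events with $b(j)=1$ (one batch of $M$ for each $l\in\{1,\dots,n\}$), the ``simple union bound'' for the tails alone still forces $C\gtrsim\sqrt{\log n}$. The ``extra decay'' you invoke is decay in $b$, not in $n$, and it is precisely the low-$b$ events that defeat the union bound. Taking $R(j)=\max\{b(j)^{\alpha},R_0\}$ with $R_0$ an absolute constant does not help for the same reason, and taking $R_0\gtrsim\log n$ reintroduces $n$-dependence into the local part. The LLL half of the plan is also under-specified — the (asymmetric) dependency graph has unbounded degree because the sum $\sum_{b'} Mb'^{\gamma}$ diverges — but the tail step is where the scheme breaks down.

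The paper's proof is genuinely different: it is a multi-scale (``renormalisation'') induction, stated as lemma \ref{lem:renormalisation}, which adds two bookkeeping parameters $\alpha,\beta$ so the induction closes. One partitions $\{1,\dots,n\}$ into blocks of a fixed size $K=K(\gamma,M)$, chooses signs i.i.d.\ within each block, and bounds the block sums $w_{s,j}$ per block $s$ by a union bound. The crucial point that your sketch misses is that, with $s$ fixed, the constraint linking $l(j)$ to $b(j)$ (either $b(j)$ is large, or $|sK-l(j)\bmod n|$ is large) caps the number of $j$'s that can fail for that block by a function of $b$ alone — not of $n$. This keeps the per-block failure probability below a universal constant, so a good $\delta$ exists. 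One then recurses on the rescaled block sums $w_{s,j}/(2\sigma K^{1/2-\alpha'})$, reducing the problem from $n$ to $\lceil n/K\rceil$ indices with slightly worse parameters $(\alpha',\beta',\gamma',M')$; the recursion terminates in $O(\log n/\log K)$ steps, and the bookkeeping parameters are calibrated so the accumulated loss is a constant depending only on $\gamma$ and $M$. In effect, the hierarchy of scales does the job that you were hoping a single localization scale plus LLL would do: the ``tail'' of each $X_j$ is treated across several successive levels, never all at once. A single-scale LLL argument might conceivably be made to work, but it would have to engage with the same multi-scale bookkeeping, and as written your tail step is simply false.
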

This lemma and its proof are inspired by a result of Kashin \cite{K79},
who proved a discrete version of Menshov's correction theorem. Roughly
the proof goes as follows. We divide the values of $i$ into blocks,
and choose the signs randomly in each block, ensuring good local behaviour.
We then group the blocks into higher level blocks, and choose signs
randomly inside each second-level block, multiplying with the previously
chosen signs, and so on. This is similar to the method of \emph{renormalisation}
in mathematical physics. 

There is a connection between lemma \ref{lem:renormalisation simplified}
and Koml\'os' conjecture. Recall that Koml\'os' conjecture asks
whether under the condition $\sum_{j}|v_{i,j}|^{2}\le1$ for all $i$
one may choose $\epsilon_{i}$ such that $\big|\sum_{i}\epsilon_{i}v_{i,j}\big|\le C$
for all $j$. We could have used Koml\'os' conjecture for lemma \ref{lem:renormalisation simplified}
if we needed the lemma only for $\gamma<1$, if we did not have the
factor $b(j)^{1/50}$, and if it were proved. Unfortunately, we need
lemma \ref{lem:renormalisation simplified} for $\gamma=91$ (see
page \pageref{pg:91}) and Koml\'os' conjecture is not proved. 

This lemma is the only part of the proof of the H\"older result which
exists in the main result verbatim. The reader is invited to see the
details in the proof of lemma \ref{lem:renormalisation} (lemma \ref{lem:renormalisation}
has two additional parameters, when compared to lemma \ref{lem:renormalisation simplified},
which are needed just because the proof is by induction). 

We have opted to state this technical-sounding lemma first because,
in a way, it was our starting point. Inspired by the use of Spencer's
theorem (a partial result on Koml\'os' conjecture, see \cite{S85})
to solve a version of the flat polynomials conjecture \cite{BBMST20},
we asked ourselves whether these techniques might be useful for Luzin's
conjecture. Even though that path did lead us to a proof, eventually
our proof does not use Spencer's theorem or any other result from
that area.

Continuing, throughout we will identify the circle $\mathbb{T}$ with
the interval $[0,1)$. We will consider homeomorphisms which fix 0,
so they are simply increasing functions from $[0,1]$ into itself.

Let $I$ be a map giving to each dyadic rational $d=k2^{-n}\in(0,1)$
an interval $I(d)\subseteq[0,1]$, possibly degenerate. We call such
functions `an RH-descriptor' where RH stands for `random homeomorphism'.
For each $I$ we introduce a random increasing function $\phi_{I}$
as follows. We set $\phi(0)=0$ and $\phi(1)=1$ deterministically.
Next, we choose $\phi(\frac{1}{2})$ uniformly in $I(\frac{1}{2})$.
We then choose $\phi(\frac{1}{4})$ uniformly in $\phi(\frac{1}{2})I(\frac{1}{4})$
and choose $\phi(\frac{3}{4})$ uniformly in $\phi(\frac{1}{2})+(1-\phi(\frac{1}{2}))I(\frac{3}{4})$.
The process then continues. Assume that we have already chosen $\phi(k2^{-(n-1)})$
for all $k=0,\dotsc,2^{n-1}$. We then, for each odd $1\le k<2^{n}$,
let $\phi(k2^{-n})$ be uniform in the interval 
\[
\phi((k-1)2^{-n})+\big(\phi((k+1)2^{-n})-\phi((k-1)2^{-n})\big)I(k2^{-n}).
\]
This allows to define $\phi$ on all dyadic rationals, after which
it can be continued to the entire interval $[0,1]$ by continuity.
For example, if $I\equiv[0,1]$ then the resulting $\phi_{I}$ is
exactly the Dubins-Freedman random homeomorphism \cite{DB67}. Our
focus in this paper, though, is on the case where $I(d)\subset J$
for some $J$ strictly contained in $[0,1]$, in which case continuity,
and even the H\"older property, are not only trivial but also hold
deterministically, and not just with probability 1, as they do for
the Dubins-Freedman homeomorphism.

Our first two lemmas are for constant $I$. It will be convenient,
therefore, to make the following definition. For an $q\in(0,\frac{1}{2})$
we let $I_{q}\equiv[\frac{1}{2}-q,\frac{1}{2}+q]$ and $\psi=\psi_{q}=\phi_{I_{q}}$.
\begin{lem}
\label{lem:densities}Let $f$ be continuous and let $x,y\in[0,\frac{1}{2}]$.
Then
\[
|\mathbb{E}(f(\psi(x)))-\mathbb{E}(f(\psi(y)))|\le C||f||_{\infty}\sqrt{\frac{|x-y|}{x+y}}
\]
where $C$ may depend on $q$.
\end{lem}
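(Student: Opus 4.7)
The claim is equivalent to bounding the total variation distance $\|\nu_x - \nu_y\|_{TV}$, where $\nu_t = \mathrm{Law}(\psi(t))$, since $\big|\int f \,d(\nu_x - \nu_y)\big| \le \|f\|_\infty \|\nu_x - \nu_y\|_{TV}$. The plan is to prove this total variation bound by induction on the dyadic level of $(x, y)$, and extend to arbitrary $x, y \in [0, 1/2]$ by continuity.

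The central ingredient is the scaling recursion coming from self-similarity of the construction. For $x \in [0, 1/2]$, conditional on $\psi(\tfrac{1}{2}) = s$, the value $\psi(x)$ has the law of $s \cdot \tilde\psi(2x)$, where $\tilde\psi$ is an independent copy of $\psi$. Since multiplication by a positive constant preserves TV distance and TV is jointly convex, this yields
\[
g(x, y) := \|\nu_x - \nu_y\|_{TV} \le g(2x, 2y), \qquad x, y \in [0, \tfrac{1}{2}].
\]
The target ratio $|x-y|/(x+y)$ is invariant under $(x, y) \mapsto (2x, 2y)$, so the target bound is preserved.

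The inductive step splits into three cases. In \emph{case (i)}, $\max(x, y) \le \tfrac{1}{4}$: apply the scaling recursion and the inductive hypothesis to $(2x, 2y) \in [0, \tfrac{1}{2}]$, which is at one lower dyadic level. In \emph{case (ii)}, $\min(x, y) \ge \tfrac{1}{8}$, so both $x, y \in [\tfrac{1}{8}, \tfrac{1}{2}]$ lie in a compact subset of $(0, 1)$: using the natural coupling (same $U_d$'s), $\psi(x) \le \psi(y)$ a.s.\ and $\mathbb{E}\psi(t) = t$, whence $W_1(\nu_x, \nu_y) \le \mathbb{E}|\psi(y) - \psi(x)| = |y - x|$. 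Combined with a uniform $L^\infty$ bound $\|p_t\|_\infty \le M(q)$ on the density of $\nu_t$ for $t \in [\tfrac{1}{8}, \tfrac{1}{2}]$, and the one-dimensional TV--Wasserstein inequality $\|\mu - \nu\|_{TV} \le C\sqrt{M \cdot W_1(\mu, \nu)}$ for measures with densities bounded by $M$, this gives $\|\nu_x - \nu_y\|_{TV} \le C_q\sqrt{|x-y|} \le C'_q\sqrt{|x-y|/(x+y)}$ (since $x + y \le 1$). In \emph{case (iii)}, $\min(x, y) < \tfrac{1}{8}$ and $\max(x, y) > \tfrac{1}{4}$: then $|x-y| > \tfrac{1}{8}$ and $x+y \le 1$, so $|x-y|/(x+y) > \tfrac{1}{8}$ is bounded below and the trivial estimate $g \le 1$ suffices once $C$ is chosen large enough.

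The main obstacle is establishing the uniform $L^\infty$ density bound used in case (ii), namely $\|p_t\|_\infty \le M(q)$ for all dyadic $t \in [\tfrac{1}{8}, \tfrac{1}{2}]$. The naive inductive argument via the convolution-like recursion $p_t(s) = \int r^{-1} p_{2t}(s/r)\, d\mu_0(r)$ (with $\mu_0 = \mathrm{Unif}[\tfrac{1}{2} - q, \tfrac{1}{2} + q]$), and its right-side symmetric counterpart, gives bounds that may grow with the dyadic level of $t$ and therefore does not immediately yield a level-independent estimate. A sharper analysis is required, exploiting that for $t$ bounded away from $\{0, 1\}$ the iterated convolutions with $\mu_0$ smooth the density sufficiently to counteract the scaling factor $r^{-1}$; this is the key technical point to verify.
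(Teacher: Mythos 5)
Your proposal shares the paper's structural starting point (the self-similarity $\psi(x)\stackrel{d}{=}U\tilde\psi(2x)$ for $x\le\frac12$ and the fact that $\mathbb{E}\psi(t)=t$), but case (ii) contains a genuine error that is separate from the density-bound gap you flag. The TV--Wasserstein inequality you invoke, $\|\mu-\nu\|_{TV}\le C\sqrt{M\,W_1(\mu,\nu)}$ for measures whose densities are merely bounded in $L^\infty$ by $M$, is false. For example, let $p=\mathbbm{1}_{[0,1]}$ and $q=(1+\sin(2\pi Nx))\mathbbm{1}_{[0,1]}$; both are probability densities bounded by $2$, the distribution functions satisfy $F_q(x)-F_p(x)=\frac{1-\cos(2\pi Nx)}{2\pi N}$ so $W_1=\frac{1}{2\pi N}\to 0$, and yet $\|p-q\|_{L^1}=\frac{2}{\pi}$ remains bounded away from zero. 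So a sup bound on the densities cannot convert a Wasserstein estimate into a total-variation estimate.

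What does work is a \emph{Lipschitz} (or $BV$) bound on the density difference: if $h=p_x-p_y$ is $L$-Lipschitz and $H(t)=\int_0^t h$ (so $H(0)=H(1)=0$ and $\int|H|=W_1$), then integration by parts gives $\|h\|_2^2=-\int Hh'\le L\int|H|=L\,W_1$, hence $\|h\|_1\le\sqrt{L\,W_1}$. But this requires controlling the \emph{derivative} of the density $p_t$, which is a substantially stronger regularity statement than the $L^\infty$ bound you already identify as unproven. Moreover, the natural mechanism that produces this regularity is precisely the multiplicative convolution with $U$ (a mollification in multiplicative form), which the paper's sketch singles out as the key step; your recursion $g(x,y)\le g(2x,2y)$ merely passes a TV bound through that step without harvesting the smoothing it provides. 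In short, even granting your acknowledged density-bound gap, case (ii) would still not close, because you have discarded the very gain in density regularity that makes the passage from $W_1$ to TV possible.
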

This lemma is not very surprising, as it merely says that if $x$
and $y$ are close (and not too close to 0 or 1) then the densities
of $\psi(x)$ and $\psi(y)$ are quite close. Unfortunately, the densities
of $\psi(x)$ do not have nice closed formulas (the definition of
$\psi$ can certainly be traced to gives closed formulas for $x=k2^{-n}$
for small $n$, but the formulas become unmanageable quickly). The
proof, very roughly, is as follows. We first note that $\mathbb{E}(\psi(x))=x$
(this holds for any $\phi_{I}$ just under the condition that all
$I(d)$ are symmetric around $\frac{1}{2}$). From this we get that
$\mathbb{E}(\psi(2x)-\psi(2y))=2x-2y$, that is the variables $\psi(2x)$
and $\psi(2y)$ are close in the 1-Wasserstein distance. We now note
that $\psi(x)-\psi(y)$ has the same distribution as $U\cdot(\psi(2x)-\psi(2y))$
where $U$ is an independent random variable uniform on $[\frac{1}{2}-q,\frac{1}{2}+q]$.
This smoothens the estimate on the probabilities and makes it into
an estimate on densities (this is similar to the way one uses convolution
to translate an estimate in a cruder norm to an estimate in a finer
norm --- the convolution here is multiplicative but the idea is the
same). We omit all further details of the proof. The square root in
lemma \ref{lem:densities} is just an artefact of our proof, the real
behaviour is probably $|x-y|/(x+y)$. This argument is not used as
is in the proof of the main result, but formula (\ref{eq:Holder away})
is quite similar (but has a different proof from the one sketched
above). 
\begin{lem}
\label{lem:two intervals Holder}Let $J_{1},J_{2}\subseteq[\frac{1}{2}-q,\frac{1}{2}+q]$
be two intervals with $d(y_{1},y_{2})<\epsilon$ for all $y_{i}\in J_{i}$
for some $\epsilon>0$ (in other words, the intervals are both short
and close to one another). Let $f$ and $g$ be two continuous functions.
Then
\[
\bigg|\int_{0}^{1}\big(\mathbb{E}(f\circ\psi\,|\,\psi(\tfrac{1}{2})\in J_{1})-\mathbb{E}(f\circ\psi\,|\,\psi(\tfrac{1}{2})\in J_{2})\big)g\bigg|\le C\epsilon^{c}||f||_{\infty}||g||_{\infty}.
\]
\end{lem}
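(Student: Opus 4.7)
The plan is to reduce the statement to the pointwise H\"older regularity of a single function $G$ on $[\tfrac12-q,\tfrac12+q]$, and then to establish this regularity using the self-similarity of $\psi$. Since $\psi(\tfrac12)$ is uniformly distributed on $[\tfrac12-q,\tfrac12+q]$, we have $\mathbb{E}(\cdot\mid\psi(\tfrac12)\in J_k)=\tfrac{1}{|J_k|}\int_{J_k}\mathbb{E}(\cdot\mid\psi(\tfrac12)=y)\,dy$. Setting
\[
G(y):=\int_0^1\mathbb{E}\bigl(f(\psi(x))\mid\psi(\tfrac12)=y\bigr)\,g(x)\,dx,
\]
the left-hand side of the lemma becomes $\bigl|\tfrac{1}{|J_1|}\int_{J_1}G-\tfrac{1}{|J_2|}\int_{J_2}G\bigr|$. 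Since $J_1,J_2$ both lie in a ball of radius~$\epsilon$, it suffices to prove that $G$ is $c$-H\"older on $[\tfrac12-q,\tfrac12+q]$ with H\"older norm bounded by $C\|f\|_\infty\|g\|_\infty$.

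Conditional on $\psi(\tfrac12)=y$, the process has the law of $y\,\tilde\psi(2x)$ on $[0,\tfrac12]$ and of $y+(1-y)\hat\psi(2x-1)$ on $[\tfrac12,1]$, where $\tilde\psi,\hat\psi$ are independent copies of $\psi$. After changing variables,
\[
G(y)=\tfrac12\int_0^1\mathbb{E}f(y\psi(u))\,g(u/2)\,du+\tfrac12\int_0^1\mathbb{E}f\bigl(y+(1-y)\psi(u)\bigr)\,g((u{+}1)/2)\,du.
\]
The two summands are of the same type modulo the affine change $s\mapsto y+(1-y)s$, which is uniformly bi-Lipschitz for $y\in[\tfrac12-q,\tfrac12+q]$, so I would focus on the first. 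Putting $h(u)=\tfrac12 g(u/2)$ and expressing the expectation through the density $\rho_u$ of $\psi(u)$, the first summand equals $\int_0^1 f(s)\,\tfrac1y K(s/y)\,ds$ with
\[
K(w):=\int_0^1 h(u)\rho_u(w)\,du.
\]

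The technical core is to show that $K$ is H\"older continuous on $[0,1]$, with H\"older norm controlled by $\|g\|_\infty$. The regularity comes from the multiplicative structure of $\psi$: for $u\in(0,\tfrac12)$ one has $\psi(u)\stackrel{d}{=}\psi(\tfrac12)\cdot\psi(2u)$ with independent factors, which yields
\[
\rho_u(w)=\tfrac{1}{2q}\int_{1/2-q}^{1/2+q}\tfrac1t\,\rho_{2u}(w/t)\,dt.
\]
Each application of this identity is a multiplicative convolution with a bounded density, hence a smoothing operation; iterating it finitely many times produces a Lipschitz (or better) bound on $\rho_u$ uniform for $u$ in any compact subset of $(0,1)$. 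Once $K$ is known to be H\"older, a short direct computation shows that $y\mapsto\tfrac1y K(\cdot/y)$ is H\"older continuous in the $L^1$ norm, and pairing with $f\in L^\infty$ delivers the desired H\"older bound on $G$.

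The main obstacle is the behaviour of $\rho_u$ near $u=0$ and $u=1$: there the density is not uniformly bounded (its sup-norm can blow up like $1/u$ near the origin), so the iteration above does not close on all of $[0,1]$. The resolution I envisage is a bulk/boundary split: perform the smoothing on $u\in[\delta,1-\delta]$ to obtain a H\"older bound on the bulk contribution to $K$, and estimate the boundary contribution $\int_{[0,\delta]\cup[1-\delta,1]}\rho_u(w)\,du$ crudely using $\mathbb{E}\psi(u)=u$ together with Markov's inequality; optimising in $\delta$ converts the pointwise blow-up into a smaller but still positive H\"older exponent~$c$.
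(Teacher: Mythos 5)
The paper does not give a proof of this lemma (it defers to the omitted proof of Lemma~\ref{lem:densities} and to Lemma~\ref{lem:j}, which is handled in the Fourier-analytic way via Lemma~\ref{lem:Holder replacement}), so the comparison is really about whether your route closes. The overall framework is in the right spirit and your reductions are sound: conditioning on $\psi(\tfrac12)=y$, rewriting $G$ via the occupation density $K(w)=\int_0^1 h(u)\rho_u(w)\,du$, and observing that what is needed is the $L^1$ dilation modulus $\|\tfrac1{y_1}K(\cdot/y_1)-\tfrac1{y_2}K(\cdot/y_2)\|_{L^1}$ are all correct. Note, though, that you silently use the nontrivial fact that $K$ is bounded by $\|h\|_\infty$, which amounts to $\mathbb{E}(\psi^{-1})'\equiv1$ (Lemma~\ref{lem:exp psi inverse prime 1}) and deserves a mention.

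The gap is in the regularity of $K$. First, the claim that "iterating the multiplicative-convolution identity finitely many times produces a Lipschitz bound on $\rho_u$ uniform for $u$ in any compact subset of $(0,1)$" is not justified and is suspect as stated: the doubling map $u\mapsto 2u\bmod 1$ does not preserve compact subsets of $(0,1)$, so the starting measure $\rho_{u_k}$ after $k$ iterations may be as concentrated as one likes even for $u$ in the bulk, and the kernel in $T$ has width proportional to position ($\approx qw$), so the smoothing is not uniform; the natural one-step statement is something like $\|T\rho\|_{BV}\le q^{-1}\int\rho(v)v^{-1}\,dv$ (with the analogous formula near $1$), not an iterated Lipschitz claim. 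Second, and more importantly, aiming for \emph{pointwise} H\"older of $K$ on all of $[0,1]$ with constant controlled only by $\|g\|_\infty$ is too strong: near $w=0$ one has $K(w)\to h(0)$ at a rate dictated by the modulus of continuity of $g$, so the pointwise H\"older constant there cannot depend only on $\|g\|_\infty$. What rescues the plan is precisely that you only need the $L^1$ estimate $\|K-\lambda^{-1}K(\cdot/\lambda)\|_{L^1}\lesssim\|g\|_\infty|1-\lambda|^c$. There the endpoint blowup is harmless, but not for the reason you give (Markov on $\int_0^\delta\rho_u(w)\,du$ is a pointwise-in-$w$ bound, which you do not get); rather, $\int_0^a K=\mathbb{E}\int_0^{\psi^{-1}(a)}h\le\|h\|_\infty\sup\psi^{-1}(a)\lesssim\|h\|_\infty a^{\delta}$ by the deterministic H\"older bound (\ref{eq:Holder psi}), so the boundary portion of the $L^1$ norm is trivially small and you can trade it against a (possibly $\delta$-dependent) BV bound on $K$ restricted to $[\delta_0,1-\delta_0]$. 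With the bulk/boundary split set up this way, and the bulk regularity established from a single application of the convolution identity rather than from iteration, the plan should close; but as written, the step "show $K$ is H\"older on $[0,1]$ with norm $\lesssim\|g\|_\infty$" is a genuine gap.
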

The proof is similar to the proof of lemma \ref{lem:densities} and
we omit it. The corresponding lemma in the proof of the main result
is lemma \ref{lem:j}, page \pageref{lem:j}.

The next lemma is the main lemma in the proof of the result. It implements
one step of a `reduction of randomness' strategy, that is of starting
from $\phi_{I}$ for $I\equiv[\frac{1}{2}-q,\frac{1}{2}+q]$ and reducing
the randomness step by step (always keeping around an estimate similar
to the one given by lemma \ref{lem:densities}) until reaching a single
homeomorphism with good properties. Unfortunately, the formulation
is a mouthful.
\begin{lem}
\label{lem:main Holder}Let $f$ be a continuous function satisfying
$||f||_{\infty}\le1$, let $q\in(0,\frac{1}{2})$ and let $n,j\in\mathbb{N}$.
Let $I$ be a function from the dyadic rationals into subintervals
of $[\frac{1}{2}-q,\frac{1}{2}+q]$, with the following properties:
\begin{enumerate}
\item For all $m<n$ and all $k\in\{1,\dotsc,2^{m}-1\}$ the interval $I(k2^{-m})$
is degenerate, i.e.\ a single point.
\item For all $k\in\{1,\dotsc,2^{n}-1\}$ odd $|I(k2^{-n})|=2^{2-j}q$.
\item For all $m>n$ and all $k\in\{1,\dotsc,2^{m}-1\}$ odd, $I(k2^{-m})=[\frac{1}{2}-q,\frac{1}{2}+q]$.
\end{enumerate}
Then there exists a function $J$ from the dyadic rationals into subintervals
of $[\frac{1}{2}-q,\frac{1}{2}+q]$ satisfying properties (i) and
(iii) and such that for each $k\in\{1,\dotsc,2^{n-1}\}$ odd, $J(k2^{-n})$
is either the left or right half of $I(k2^{-n})$ (in particular,
property (ii) is satisfied for $J$ with $j+1$ instead of $j$).

Further, for every $u\in\mathbb{N}$, every $r\in\{2^{u-1},\dotsc,2^{u}-1\}$
and every $\xi\in2^{-u-2}\mathbb{Z}\cap[0,1)$ we have the estimate
\begin{equation}
\bigg|\int_{E_{\xi,n}}(\mathbb{E}(f(\phi_{I}(y)))-\mathbb{E}(f(\phi_{J}(y))))\cdot D_{r}(x-\xi)\,dx\bigg|\le C2^{-c(j+|n-u|)}\label{eq:intExin Holder}
\end{equation}
where 
\[
E_{\xi,n}\coloneqq\begin{cases}
[0,1]\setminus([-2^{2-n},2^{2-n}]+2^{1-n}\lfloor\xi2^{n-1}\rfloor & u>n\\{}
[0,1] & u\le n
\end{cases}
\]
and where $D_{r}$ is Dirichlet's kernel.
\end{lem}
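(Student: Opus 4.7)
The plan is to parametrize each candidate $J$ by a sign vector $\epsilon\in\{\pm 1\}^{2^{n-1}}$, with $\epsilon_m=+1$ (resp.~$-1$) selecting the right (resp.~left) half of $I(m2^{-n})$ for odd $m$, and then invoke lemma \ref{lem:renormalisation simplified} to find a favourable $\epsilon$. Denote the resulting descriptor by $J^\epsilon$.

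The first step is to linearise in $\epsilon$. Since $I(k2^{-m})$ is degenerate for $m<n$, the grid values $V_k\coloneqq\phi(k2^{-n})$ are deterministic for even $k$; for odd $k$ they are uniform on an interval of length $2^{2-j}q(V_{k+1}-V_{k-1})$ under $\phi_I$ and on the appropriate half under $\phi_{J^\epsilon}$. Because $I$ and $J^\epsilon$ equal $[\tfrac12-q,\tfrac12+q]$ at all levels $m>n$, conditional on the grid the restrictions of $\phi$ to the intervals $[k2^{-n},(k+1)2^{-n}]$ are independent rescaled copies of $\psi_q$. Crucially, each odd $V_m$ affects $\phi$ only on the two intervals adjacent to $m2^{-n}$, so for fixed $x$ the expectation $\mathbb{E}(f(\phi(x)))$ (with the $\psi_q$'s averaged out) is a function of a \emph{single} $V_m$. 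Combined with the identity $\mathbb{E}_I=\tfrac12(\mathbb{E}_{+\mathrm{half}}+\mathbb{E}_{-\mathrm{half}})$ at each $V_m$, this yields the \emph{exact} decomposition
\[
\mathbb{E}(f(\phi_I(x)))-\mathbb{E}(f(\phi_{J^\epsilon}(x)))=\sum_{m\text{ odd}}\epsilon_m B_m(x),
\]
where each $B_m$ is supported in $[(m-1)2^{-n},(m+1)2^{-n}]$. Integrating against $D_r(x-\xi)\mathbf{1}_{E_{\xi,n}}(x)$ produces coefficients $v_{m,(u,r,\xi)}$ so that the quantity under consideration equals $\sum_m\epsilon_m v_{m,(u,r,\xi)}$, reducing the task to verifying the hypotheses of lemma \ref{lem:renormalisation simplified}.

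A preliminary estimate, crucial for the whole approach, is $\|B_m\|_\infty\lesssim 2^{-j}$. This follows from the representation $B_m(x)=-\tfrac{1}{|I(m2^{-n})|}\int_{I(m2^{-n})}\sigma(v)h_x(v)\,dv$, where $\sigma$ is the antisymmetric indicator of the half and $h_x(v)$ is smooth in $v$ with derivative bounded via the regularity of $\psi_q$-densities (akin to lemma \ref{lem:densities}); the factor $2^{-j}$ comes from $|I(m2^{-n})|\sim 2^{-j}$. Set $l(u,r,\xi)$ to be the odd index nearest $\xi 2^n$ and $b(u,r,\xi)\coloneqq 2^{c_0(j+|n-u|)}$ for a suitable small constant $c_0$. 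The bound $|v_{m,(u,r,\xi)}|\lesssim 1/(|m-l|+1)$ (with the index taken $\bmod\,2^{n-1}$) follows from $|D_r(x-\xi)|\lesssim 1/|x-\xi|$ together with the localisation of $B_m$; the cutoff $E_{\xi,n}$ is precisely what keeps this estimate useful when $u>n$. The bound $|v_{m,(u,r,\xi)}|\lesssim 2^{-(j+|n-u|)}$ combines $\|B_m\|_\infty\lesssim 2^{-j}$ with further smallness from oscillation: for $u\le n$, $D_r$ is near-constant on the support of $B_m$ and one uses a second-order cancellation $|\int B_m|\lesssim 2^{-j}\cdot 2^{-n}$ in the integral over $v$; for $u>n$, one exploits H\"older regularity of $B_m$ at scale $2^{-n}$ (inherited from density regularity as in lemma \ref{lem:two intervals Holder}) against the fast oscillations of $D_r$. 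A careful count of $(u,r,\xi)$ triples with prescribed $(l,b)$ must then yield a polynomial bound $Mb^\gamma$, after which lemma \ref{lem:renormalisation simplified} supplies the desired $\epsilon$ with $|\sum_m\epsilon_m v_{m,(u,r,\xi)}|\le C/b^{1/50}$, matching the target estimate.

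The technical heart of the argument is twofold: producing the cancellation $|v_{m,(u,r,\xi)}|\lesssim 2^{-(j+|n-u|)}$ in both frequency regimes, and assembling the combinatorial bookkeeping so that the count per $(l,b)$ pair is polynomial \emph{with a constant independent of $n$}. The analytic task requires sharp density-regularity estimates for $\psi_q$, controlling both the magnitude of $B_m$ and its smoothness at scale $2^{-n}$. The combinatorial task is delicate because naive counting introduces factors of $2^n$ that must be absorbed by the choice of $(l,b)$-assignment and the exponent $\gamma$; it is for this reason that the large value $\gamma=91$ mentioned on page \pageref{pg:91} is needed.
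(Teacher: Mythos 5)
Your decomposition $\mathbb{E}(f\circ\phi_I)-\mathbb{E}(f\circ\phi_{J^\epsilon})=\sum_m\epsilon_m B_m$ and the plan of feeding inner-product vectors into lemma~\ref{lem:renormalisation simplified} are the right ideas, and agree with the paper's strategy. However, the combinatorial step fails in the form you set it up, and this is a genuine gap, not a detail you could hope to ``carefully'' patch.

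The problem is the choice of index set and of the function $b$. You take one vector $v_{\cdot,(u,r,\xi)}$ for every triple $(u,r,\xi)$, with $b(u,r,\xi)=2^{c_0(j+|n-u|)}$. Since $b$ depends only on $u$ (for fixed $j,n$), \emph{all} $2^{u-1}$ values of $r\in\{2^{u-1},\dotsc,2^{u}-1\}$ are assigned the same $b$, and each of them contributes its own vector. Thus, for a given pair $(l,b)$, $u$ is determined up to a constant number of choices, the number of admissible $\xi$ is $O(2^{\max\{u-n,0\}})$, and the number of $r$'s is $2^{u-1}$, giving a count of order $2^{u-1}$ or worse. Taking, say, $j=1$ and $u\approx n$ gives $b$ of bounded size but a count of order $2^{n}$. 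There is no choice of constant $M$ and exponent $\gamma$ (both required to be independent of $n$) making the hypothesis $|\{j:l(j)=l,\,b(j)=b\}|\le Mb^{\gamma}$ of lemma~\ref{lem:renormalisation simplified} hold. Nor can $b$ be made to depend on $r$ to fix this: the pointwise bound $|v_{m,(u,r,\xi)}|\lesssim 2^{-c(j+|n-u|)}$ is essentially uniform over $r$ in a dyadic block, so the assignment of $b$ cannot discriminate between different $r$'s in that block, and the count per $(l,b)$ is still $\gtrsim 2^{u-1}$.

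The missing ingredient is the dyadic decomposition of the Dirichlet kernel, which is how the paper avoids this. Writing $r=2^{u-1}+\sum 2^{s_i}$ in binary and $D_r=D_{2^{u-1}}+\sum_i\big(\sum_{z=t_{i-1}+1}^{t_i}e(zx)+\sum_{z=-t_i}^{-t_{i-1}-1}e(zx)\big)$, one defines one vector per triple $(u,\xi)$ for the central piece $D_{2^{u-1}}$, and one vector per quadruple $(u,\xi,s,t)$ for each dyadic oscillatory block $\sum_{z=t+1}^{t+2^{s}}e(zx)$. These blocks are \emph{shared} across all $r$'s whose binary expansion contains them, so the total collection of vectors is far smaller than the set of all $(u,r,\xi)$. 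Moreover, a block of length $2^{s}$ admits a much better absolute bound as $s$ decreases (it integrates to something of size $\lesssim 2^{s-n}$ against $B_m$, with an extra $2^{-cj}$ gain), so one can assign $b$ \emph{increasing} as $s$ decreases, and the count per $(l,b)$ then comes out to be a polynomial in $b$ with constants uniform in $n$. The target estimate for $\int\sum\epsilon_m B_m\,D_r(\cdot-\xi)$ is then reassembled by summing the outputs of lemma~\ref{lem:renormalisation simplified} along the binary expansion of $r$, with the geometric decay in $s$ (encoded in $b^{-1/50}$) making the resulting sum convergent. Without this decomposition your counting cannot close.

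A minor secondary point: the bound $\|B_m\|_\infty\lesssim 2^{-j}$ is likely too optimistic; the density-regularity argument of lemma~\ref{lem:two intervals Holder} gives only a power $\epsilon^{c}$ in $\epsilon$, i.e.\ $\|B_m\|_\infty\lesssim 2^{-cj}$ for some $c<1$. This does not affect the structure of the argument but it does affect which exponents one can hope for in the target estimate.
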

Before starting the proof, let us discuss quickly the integration
interval $E_{\xi,n}$. In fact, this has been added only for convenience.
Lemma \ref{lem:main Holder} could have been proved with (\ref{eq:intExin Holder})
replaced with 
\[
\bigg|\int_{0}^{1}(\mathbb{E}(f(\phi_{I}(y)))-\mathbb{E}(f(\phi_{J}(y))))\cdot D_{r}(x-\xi)\,dx\bigg|\le C2^{-c(j+|n-u|)}.
\]
This would certainly simplify the formulation of the lemma, but, unfortunately,
would complicate significantly its proof. This is due to various technical
issues stemming from the vicinity of the peak of the Dirichlet kernel
(the complications arise in the proofs of lemmas \ref{lem:Fpm Holder}
and \ref{lem:int Delta small Holder} below, and since these are not
spelled out in detail, the reader would have to trust us on this point.
But indeed, the complications incurred are significant). Hence we
opted for the statement above which `delays' the handling of the peak
of $D_{r}$ until $n\approx r$.
\begin{proof}
[Sketch of proof of lemma \ref{lem:main Holder}]To choose $J$ is
equivalent to choosing $2^{n-1}$ signs, as our only freedom is in
choosing, for $k\in\{1,\dotsc,2^{n}-1\}$ odd, whether we take $J(k2^{-n})$
to be the left or right half of $I(k2^{-n})$. Denote therefore, for
any $\epsilon=(\epsilon_{1},\epsilon_{3},\dotsc,\epsilon_{2^{n}-1})$,
$\epsilon_{k}\in\{\pm1\}$, the corresponding $J$ by $J^{\epsilon}$
($\epsilon_{k}=1$ means that we take the right half of $I(k2^{-n})$
and $\epsilon_{k}=-1$ means we take the left half). With this notation,
our goal becomes finding some $\epsilon$ such that $J^{\epsilon}$
satisfies (\ref{eq:intExin Holder}).

Fix one $k\in\{1,\dotsc,2^{n-1}\}$, odd. Denote 
\[
F^{\pm}(x)=\begin{cases}
\mathbb{E}(f(\phi_{J^{\epsilon^{\pm}}}(x)) & x\in[(k-1)2^{-n},(k+1)2^{-n}]\\
0 & \text{otherwise}
\end{cases}
\]
where $\epsilon^{\pm1}$ is an $\epsilon$ with $\epsilon_{k}=\pm1$
and the rest chosen arbitrarily (clearly, for $x\in[(k-1)2^{-n},(k+1)2^{-n}]$
only the choice of $\epsilon_{k}$ matters). Denote
\[
\Delta_{k}(x)\coloneqq\frac{1}{2}(F^{+}(x)-F^{-}(x)).
\]
We get $\mathbb{E}(f(\phi_{J^{\epsilon^{\pm}}}(x)))=\mathbb{E}(f(\phi_{I}(x))\pm\Delta_{k}(x)$
for all $x\in[(k-1)2^{-n},(k+1)2^{-n}]$. Summing over $k$ gives
\[
\mathbb{E}(f\circ\phi_{J^{\epsilon}})=\mathbb{E}(f\circ\phi_{I})+\sum_{k}\epsilon_{k}\Delta_{k}.
\]
Our strategy would be to find estimates for $\int\Delta_{k}D_{r}$
and feed them into lemma \ref{lem:renormalisation simplified}. Here
are two such estimates (the corresponding lemmas in the main proof
are lemmas \ref{lem:rs large} and \ref{lem:j large})\phantom\qedhere
\end{proof}
\begin{lem}
\label{lem:Fpm Holder}If $|r|,|s|>2^{n}$ and $\xi\not\in[(k-3)2^{-n},(k+3)2^{-n}]$
then
\[
\bigg|\int F^{\pm}\sum_{l=r}^{s-1}e(l(x-\xi))\bigg|\le\left(\sqrt{\frac{2^{n}}{|r|}}+\sqrt{\frac{2^{n}}{|s|}}\right)\frac{C}{|2^{n}\xi-k|}
\]
\end{lem}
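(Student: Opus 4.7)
The plan is to sum the geometric series in closed form,
\[
\sum_{l=r}^{s-1} e(l(x-\xi)) = \frac{e(s(x-\xi)) - e(r(x-\xi))}{e(x-\xi)-1},
\]
so that the quantity to bound becomes (up to phases) $|\widehat{g}(-r)| + |\widehat{g}(-s)|$ for the compactly supported function $g := F^{\pm}\chi$, where $\chi(x) := 1/(e(x-\xi)-1)$. Under the hypothesis $\xi\notin[(k-3)2^{-n},(k+3)2^{-n}]$, every $x$ in the support $[(k-1)2^{-n},(k+1)2^{-n}]$ of $F^{\pm}$ satisfies $\|x-\xi\|_{\mathbb{T}}\gtrsim 2^{-n}|2^n\xi - k|$; thus on the support $\|\chi\|_{\infty}\lesssim 2^{n}/|2^n\xi - k|$ and $\chi$ is Lipschitz with constant $\lesssim 2^{2n}/|2^n\xi - k|^{2}$.

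The key remaining ingredient is a quantitative continuity estimate for $F^{\pm}$ on its support. After affinely rescaling $[(k-1)2^{-n},(k+1)2^{-n}]$ to $[0,1]$, $F^{\pm}$ becomes $\mathbb{E}(f\circ\sigma)$ for a random homeomorphism $\sigma$ of $[0,1]$ whose structure differs from $\psi_{q}$ only in the very first split, which is uniform in a half of $I(k2^{-n})$ (a subinterval of $[\tfrac12-q,\tfrac12+q]$) rather than in the full interval. Conditioning on $\sigma(\tfrac12)$ makes the two subtrees independent rescaled copies of $\psi_{q}$, so lemma \ref{lem:densities} applies to each and produces $|\mathbb{E}(f(\sigma(\tilde x))) - \mathbb{E}(f(\sigma(\tilde y)))| \lesssim \|f\|_\infty\sqrt{|\tilde x-\tilde y|/(\tilde x+\tilde y)}$ on the rescaled interval $[0,1]$.

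We extract Fourier decay through the translation identity $\widehat g(l) = \tfrac12\int(g(x)-g(x+\tfrac{1}{2l}))e(-lx)\,dx$, bounding the right-hand side by the $L^{1}$-modulus of continuity of $g$ at scale $h := 1/(2|l|)$, together with a $\|g\|_{\infty}\cdot h$ contribution from the strip where one of $x,x+h$ falls outside the support. The $F^\pm$-piece of this modulus, after unrescaling, is $\lesssim 2^{-n/2}\sqrt{h}$ because the weight $1/\sqrt{\tilde x+\tilde y}$ is $L^1$-integrable on $[0,1]$; the $\chi$-piece is controlled by its Lipschitz bound. A short computation combining these with $\|\chi\|_\infty$ and $\|g\|_\infty$ shows that for $|l|\ge 2^n$,
\[
|\widehat g(l)| \le \frac{C}{|2^n\xi - k|}\sqrt{\frac{2^n}{|l|}},
\]
which summed over $l\in\{r,s\}$ is the claim.

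The main obstacle is the adaptation of lemma \ref{lem:densities}: its bound degenerates at the endpoints of the rescaled support through the factor $1/\sqrt{\tilde x+\tilde y}$, but this singularity is $L^1$-integrable, which is exactly what the modulus-of-continuity integral requires. Endpoint discontinuities of $g$, coming from generically nonzero boundary values of $F^{\pm}$, contribute only $O(1/|l|)$ to $\widehat g(l)$, which for $|l|\ge 2^n$ is absorbed into the weaker $|l|^{-1/2}$ bound.
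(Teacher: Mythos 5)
Your proof is correct and captures the same mechanism as the paper's sketch: condition on $\phi$ at the midpoint $k2^{-n}$ so that each half-interval becomes an independent rescaled copy of $\psi_q$, apply lemma~\ref{lem:densities} to obtain the ``H\"older away from the boundaries'' estimate, and from it extract $|l|^{-1/2}$ Fourier decay. Where you differ is only in the presentation of that last step: you fold the kernel singularity $\chi(x)=1/(e(x-\xi)-1)$ into the product $g=F^{\pm}\chi$ and bound $\widehat g(l)$ via the translation identity and the $L^1$-modulus of continuity of $g$, whereas the paper's version (see also its formal analogue, lemma~\ref{lem:rs large}) keeps $\chi$ as a separate slowly-varying factor and literally integrates by parts against it. These are interchangeable implementations of the same classical ``H\"older implies Fourier decay'' bound, and you correctly pinpoint the crucial feature that the square-root endpoint singularities inherited from lemma~\ref{lem:densities} are $L^1$-integrable, which is what makes the modulus-of-continuity integral close. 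Two small imprecisions, neither affecting the outcome: (1)~after conditioning on $\sigma(\tfrac12)$ and applying lemma~\ref{lem:densities} to each half, the H\"older estimate on the rescaled interval actually degenerates at $\tilde x=0$, $\tfrac12$, and $1$, not only through the single factor $1/\sqrt{\tilde x+\tilde y}$ you write down; all three are of the same integrable square-root type, and the case where $[\tilde x,\tilde x+\tilde h]$ straddles $\tfrac12$ is handled exactly like the endpoint strips. (2)~The boundary-strip contribution to $\widehat g(l)$ is $O(1/|l|)$ \emph{times} $2^n/|2^n\xi-k|$, not $O(1/|l|)$ alone, but since $2^n/|l|\le\sqrt{2^n/|l|}$ for $|l|\ge2^n$ this is still absorbed into the stated bound.
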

\noindent (as usual, $e(x)=e^{2\pi ix}$)
\begin{proof}
[Proof sketch]Condition on $\phi_{J^{\epsilon}}(k2^{-n})$ and consider
independently the intervals $[(k-1)2^{-n},k2^{-n}]$ and $[k2^{-n},(k+1)2^{-n}]$.
Let $V$ be one of these intervals. We have that $\phi_{J^{\epsilon}}$
is deterministic on $(k\pm1)2^{-n}$ by assumption and on $k2^{-n}$
by the conditioning, so it is deterministic on both ends of $V$.
This means that $\phi_{J}$ restricted to $V$ is simply an appropriately
linearly mapped version of the $\phi_{I}$ from lemma \ref{lem:densities}.
Applying that lemma gives that $F^{\pm}$ satisfies an appropriate
`H\"older away from the boundaries' estimate. Integrating by parts
gives a good estimate for $\int_{V}F^{\pm}\sum e(l(x-\xi))$, still
conditioned on $\phi_{J^{\epsilon}}(k2^{-n})$ (this is the standard
argument that shows that a H\"older function has power-law decaying
Fourier coefficients, the fact that it is only H\"older away from
the boundaries requires no change in the argument, and the $2^{n}$
factors come from the scaling). Integrating the conditioning gives
lemma \ref{lem:Fpm Holder}.
\end{proof}
\begin{lem}
\label{lem:int Delta small Holder}For every $r<s$ and every $\xi$,
\[
\bigg|\int\Delta_{k}(x)\sum_{l=r}^{s-1}e(l(x-\xi))\,dx\bigg|\le C\min\Big\{\frac{1}{|2^{n}\xi-k|},2^{-n}(s-r)\Big\}2^{-cj}
\]
\end{lem}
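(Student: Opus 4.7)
The idea is to exploit that $\Delta_{k}=\frac{1}{2}(F^{+}-F^{-})$ is a difference of two conditional expectations that differ only in which half of $I(k2^{-n})$ the value $\phi(k2^{-n})$ lies. Since the two halves have total diameter of order $2^{-j}$, a scaled application of lemma \ref{lem:two intervals Holder} will produce the $2^{-cj}$ saving. Specifically, by property (i) the processes $\phi_{I}$ and $\phi_{J^{\pm}}$ are deterministic at the endpoints $(k\pm1)2^{-n}$ of the support of $\Delta_{k}$. Rescaling the interval $[(k-1)2^{-n},(k+1)2^{-n}]$ linearly, together with the corresponding range, to $[0,1]$ puts the restriction of the processes into the setting of lemma \ref{lem:two intervals Holder}: the dyadic $I$-function coincides with $I_{q}=[\frac{1}{2}-q,\frac{1}{2}+q]$ at every dyadic rational except the midpoint, where it equals (the rescaled copy of) $I(k2^{-n})$. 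Conditioning on which half of $I(k2^{-n})$ contains $\phi(k2^{-n})$ is the same as conditioning $\psi(\tfrac{1}{2})$ (for the standard $\psi=\phi_{I_{q}}$) to lie in the corresponding half, so the lemma applies with $J_{1},J_{2}\subseteq[\tfrac{1}{2}-q,\tfrac{1}{2}+q]$ and $\epsilon=O(2^{-j})$. This yields, after undoing the rescaling,
\[
\bigg|\int\Delta_{k}(x)\,h(x)\,dx\bigg|\le C\cdot 2^{-n}\cdot 2^{-cj}\cdot\Vert h\Vert_{\infty}
\]
for any bounded $h$ on the support of $\Delta_{k}$, where the $2^{-n}$ comes from the length Jacobian.

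To produce the $\min$, apply this with $h=g(x)\coloneqq\sum_{l=r}^{s-1}e(l(x-\xi))$ using two estimates on $\Vert g\Vert_{\infty}$. The trivial bound $\Vert g\Vert_{\infty}\le s-r$ yields the branch $C\cdot 2^{-n}(s-r)\cdot 2^{-cj}$. When $|2^{n}\xi-k|\ge 2$, the point $\xi$ lies at distance at least $2^{-n}|2^{n}\xi-k|/2$ from the support of $\Delta_{k}$, so the standard Dirichlet-type bound $|g(x)|\le C/|x-\xi|_{S^{1}}$ gives $\Vert g\Vert_{\infty}\le C\cdot 2^{n}/|2^{n}\xi-k|$ on that support, yielding the branch $C\cdot 2^{-cj}/|2^{n}\xi-k|$. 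Taking the smaller of the two gives the claimed min-estimate in these two regimes.

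The main obstacle is the intermediate regime $|2^{n}\xi-k|\in(2^{n}/(s-r),\,2)$ with $s-r\gg 2^{n}$, in which $\xi$ sits inside or just beside the support and neither of the two estimates above is tight enough. The remedy is to split $g=g_{\textrm{near}}+g_{\textrm{far}}$ about $\xi$ at some scale $\delta$: the near piece is controlled by $\Vert g_{\textrm{near}}\Vert_{\infty}\le s-r$ together with $\int_{|x-\xi|\le\delta}|\Delta_{k}|\le 4\delta$, while $g_{\textrm{far}}$ satisfies $\Vert g_{\textrm{far}}\Vert_{\infty}\le C/\delta$ and is controlled by the rescaled lemma. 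An appropriate choice of $\delta$ (possibly iterated into a dyadic multi-scale decomposition, and at the price of a slightly smaller exponent $c$) closes the remaining gap; I would expect this splitting, and the verification that lemma \ref{lem:two intervals Holder} really does descend to the restriction of $\phi_I$ on a subinterval with a non-standard midpoint $I$, to be the only genuinely delicate points in carrying out the proof.
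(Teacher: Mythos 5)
Your outline of the main step is exactly the one the paper intends: after a lemma~\ref{lem:local psi inverse}-style rescaling of $[(k-1)2^{-n},(k+1)2^{-n}]$ onto $[0,1]$, $\Delta_k$ becomes a difference of two conditional expectations of the kind considered in lemma~\ref{lem:two intervals Holder} with $\epsilon\approx 2^{-j}$, so that $|\int\Delta_k\, h|\le C\,2^{-n}2^{-cj}\Vert h\Vert_\infty$ for any bounded $h$, and one then inserts the sup bound for the Dirichlet-type sum on the support. That is the whole content of the paper's parenthetical remark below the lemma, and your account of how the conditioning descends to the restriction of $\phi_I$ with a single non-standard midpoint interval is correct --- it is exactly the locality formalised in lemma~\ref{lem:local psi inverse} and (\ref{eq:psi random LI}).

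You have also correctly located the problematic regime, namely $2^n/(s-r)<|2^n\xi-k|\lesssim 2$, where the desired bound is $\approx 2^{-cj}$ but the sup of the kernel on the support is $s-r$. However, your proposed remedy does not close the gap. In a near/far split at scale $\delta$, the near piece is controlled only by $(s-r)\int_{|x-\xi|\le\delta}|\Delta_k|\le 2\delta(s-r)$ (the $L^1$ information from lemma~\ref{lem:two intervals Holder} lives at scale $2^{-n}$, so it does not improve the sub-interval integral unless $\delta\gtrsim 2^{-n}$); making this $\lesssim 2^{-cj}$ forces $\delta\lesssim 2^{-cj}/(s-r)$. The far piece is $\lesssim 2^{-n}2^{-cj}/\delta$, forcing $\delta\gtrsim 2^{-n}$. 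These two requirements are incompatible exactly when $s-r\gtrsim 2^{n}2^{-cj}$, which covers the entire regime in question (there $s-r\gtrsim 2^{n}$). A multi-scale dyadic version fares no better: applying lemma~\ref{lem:two intervals Holder} to the annular piece at distance $\approx 2^{-m}$ from $\xi$ gives $\lesssim 2^{-n}2^{-cj}2^{m}$, and summing $n\le m\le\log_2(s-r)$ reproduces the trivial bound $\approx 2^{-cj}(s-r)2^{-n}$. The obstruction is intrinsic: lemma~\ref{lem:two intervals Holder} is an $L^1\!\times\!L^\infty$ pairing, while the Dirichlet-type kernel has $L^1(\supp\Delta_k)$ norm of order $\log(s-r)$ when $\xi$ sits on the support, so no argument of this shape can avoid a logarithmic loss.

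The paper's actual resolution is to avoid this regime rather than prove the lemma in it. This is stated in the text right after the lemma: it is said to ``give a useless estimate'' when $\xi$ is too close to $k2^{-n}$, and that is precisely why the integration domain in lemma~\ref{lem:main Holder} is $E_{\xi,n}$ rather than $[0,1]$ --- the $w^{0,\pm}$ attached to $V_k$ near $\xi$ are dropped from the renormalisation. (Note also that even the paper's claim that the sup of the kernel on the support is $\min\{|\xi-k2^{-n}|^{-1},s-r\}$ only holds when $\xi$ is at distance $\gtrsim 2^{-n}$ from $k2^{-n}$.) In the full proof this is made precise: lemma~\ref{lem:j large} replaces $1/|2^{n}\xi-k|$ by $\delta/\dist(\xi,V_i)$, which is infinite, hence vacuous, when $\xi\in V_i$, and merely large but harmless when $\xi$ is just outside; its proof explicitly assumes $\xi\notin V_i$ for the nontrivial branch, and lemma~\ref{lem:spectral dyadic} sets $w^{0,\pm}$ to zero whenever $V_i\subset B_\xi$ and $2^{u}>1/\delta$. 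So your diagnosis of where the delicacy lies was exactly right; the fix is to never invoke the estimate there, not to improve it.
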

Lemma \ref{lem:int Delta small Holder} follows from lemma \ref{lem:two intervals Holder}
using the same scaling argument that was used to conclude lemma \ref{lem:Fpm Holder}
from lemma \ref{lem:densities} and we omit the details. (The factor
$2^{-cj}$ in lemma \ref{lem:int Delta small Holder} comes from the
factor $\epsilon^{c}$ in lemma \ref{lem:two intervals Holder}. Unlike
in lemma \ref{lem:Fpm Holder} no cancellation in the exponential
sum is used here, this lemma uses only that the maximum of $\sum e(l(x-\xi))$
on $[(k-1)2^{-n},(k+1)2^{-n}]$ can be estimated by $\min\{|\xi-k2^{-n}|^{-1},s-r\}$.
The cancellation would come in by combining this lemma with lemma
\ref{lem:Fpm Holder}, later).
\begin{proof}
[Proof of lemma \ref{lem:main Holder}, continued]For every $u\in\mathbb{N}$
define $U\coloneqq2^{\max\{u,n\}+2}$. For every $k\in\{1,\dotsc,2^{n}-1\}$
odd and every $\xi\in\frac{1}{U}\mathbb{Z}\cap[0,1)$ define
\[
w_{k,\xi,u}^{0}=\int\Delta_{k}D_{2^{u}}(x-\xi)\,dx.
\]
Next, for every $u\in\mathbb{N}$, for every $0\le s<u$ and every
$t\in[2^{u-1},2^{u})$ divisible by $2^{s+1}$, every $k\in\{1,\dotsc,2^{n-1}\}$
odd and every $\xi\in\frac{1}{U}\mathbb{Z}\cap[0,1)$ we define 
\begin{align*}
w_{k,\xi,u,s,t}^{+} & =\int\Delta_{k}(x)\sum_{z=t+1}^{t+2^{s}}e(z(x-\xi))\,dx,\\
w_{k,\xi,u,s,t}^{-} & =\int\Delta_{k}(x)\sum_{z=-t-2^{s}}^{-t-1}e(z(x-\xi))\,dx.
\end{align*}
We rearrange the vectors $w^{0}$, $w^{+}$ and $w^{-}$ into one
long list $v_{k,j}$ (we consider them to be vectors in $k$, so for
example we could take $v_{k,1}=w_{k,0,1}^{0}$ if we like to start
with $w^{0}$, or we could take $v_{k,1}=w_{k,0,0,0,2}^{+}$ if we
like to start with $w^{+}$). We now apply lemma \ref{lem:renormalisation simplified}
with $n_{\textrm{lemma \ref{lem:renormalisation simplified}}}=2^{n}$.
Lemma \ref{lem:renormalisation simplified} requires us to find, for
every $v_{k,j}$ some $l(j)$ and some $b(j)$ such that 
\begin{equation}
|v_{k,j}|\le\min\Big\{\frac{1}{|k-l(j)\bmod n|+1},\frac{1}{b(j)}\Big\}\label{eq:vkj sketch}
\end{equation}
and to count how many $j$ exist for each choice of $l$ and $b$.
We take $l(j)=\lfloor2^{n}\xi\rfloor$ for the corresponding $\xi$,
and this gives an estimate of the form (\ref{eq:vkj sketch}) from
lemmas \ref{lem:Fpm Holder} and \ref{lem:int Delta small Holder}.
Checking which $b$ is appropriate for each vector is straightforward
and we save the reader all the index checking (note that occasionally
we have to combine the estimates from the two lemmas using the fact
that for any two numbers $x$ and $y$, $\min\{x,y\}\le\sqrt{xy}$).
Eventually we get that 
\[
\{j:b(j)=b,l(j)=l\}\le Cb^{9}\qquad\forall b,l
\]
for some absolute constant. The conclusion of lemma \ref{lem:renormalisation simplified}
is then that there exists a choice of $\epsilon_{k}$ such that
\begin{equation}
\bigg|\sum_{k}\epsilon_{k}v_{k,j}\bigg|\le\frac{C}{b(j)^{1/50}}\qquad\forall j.\label{eq:renormalisation output Holder}
\end{equation}
Finally, to estimate $\int\Delta D_{r}$ for an arbitrary $r$ write
$r$ in its binary expansion
\[
r=2^{u-1}+\sum2^{s_{i}}\qquad t_{i}\coloneqq2^{u-1}+2^{s_{1}}+\dotsb+2^{s_{i}}
\]
for some decreasing sequence $s_{i}$. We get
\[
D_{r}=D_{2^{u-1}}+\sum_{i}\bigg(\sum_{z=t_{i-1}+1}^{t_{i}}e(zx)+\sum_{z=-t_{i}}^{-t_{i-1}-1}e(zx)\bigg).
\]
Hence for any $\xi\in\frac{1}{U}\mathbb{Z}\cap[0,1)$ we have
\begin{multline*}
\int\sum_{k}\epsilon_{k}\Delta_{k}(x)D_{r}(x-\xi)\,dx\\
=\bigg(\sum_{k}\epsilon_{k}w_{k,\xi,u}^{0}+\sum_{i}\sum_{k}\epsilon_{k}w_{k,\xi,u,s_{i},t_{i-1}}^{+}+\sum_{i}\sum_{k}\epsilon_{k}w_{k,\xi,u,s_{i},t_{i-1}}^{-}\bigg).
\end{multline*}
Plugging in the estimate (\ref{eq:renormalisation output Holder})
proves the lemma. The factor $b^{1/50}$ in (\ref{eq:renormalisation output Holder})
plays an important role here, as it makes the estimates for $w^{0,\pm}$
decay exponentially as $|u-n|$ increases and as $s$ decreases (the
factor $2^{-cj}$ from lemma \ref{lem:int Delta small Holder} multiplies
all the terms equally and translates to the $2^{-cj}$ factor in lemma
\ref{lem:main Holder} directly). The fact that the integral is only
on $E_{\xi,n}$ and not on the whole of $[0,1]$ comes from the fact
that lemmas \ref{lem:Fpm Holder} and \ref{lem:int Delta small Holder}
do not work for $\xi$ too close to $k2^{-n}$ (lemma \ref{lem:Fpm Holder}
explicitly and lemma \ref{lem:int Delta small Holder} gives a useless
estimate), so we have to drop the corresponding $w^{0}$ and $w^{\pm}$
and remove the corresponding element from $\sum\epsilon_{k}\Delta_{k}$.
\end{proof}
\begin{thm}
\label{thm:Holder}For every continuous function $f$ and every $\lambda<1$
there is a $\lambda$-H\"older homeomorphism $\psi$ such that $S_{r}(f\circ\psi)$
converges boundedly.
\end{thm}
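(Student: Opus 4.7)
The plan is to obtain $\psi$ as the limit of a process starting from the fully random $\phi_{I}$ with $I\equiv[\tfrac{1}{2}-q,\tfrac{1}{2}+q]$, switching off the randomness one dyadic generation at a time using Lemma~\ref{lem:main Holder}, and keeping the integrals $\int\mathbb{E}(f\circ\phi)D_{r}(x-\xi)\,dx$ under uniform control throughout. The parameter $q$ is fixed in $(0,2^{-\lambda}-\tfrac{1}{2})$, so that $\tfrac{1}{2}+q<2^{-\lambda}$; since every scheme produced along the process has all of its dyadic contraction ratios inside $[\tfrac{1}{2}-q,\tfrac{1}{2}+q]$, the image of a dyadic interval of length $2^{-n}$ has length at most $(\tfrac{1}{2}+q)^{n}\le 2^{-\lambda n}$, and the final deterministic $\psi$ is automatically $\lambda$-H\"older.

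I would sweep through the dyadic levels $n=1,2,3,\dots$ in order. Entering level $n$, the values $I(k2^{-m})$ with $m<n$ are already single points (frozen by earlier sweeps), each $I(k2^{-n})$ is the full interval $[\tfrac{1}{2}-q,\tfrac{1}{2}+q]$, and the levels above remain fully random, which is precisely the hypothesis of Lemma~\ref{lem:main Holder} with $j=1$. Applying the lemma successively with $j=1,2,3,\dots$ halves each interval at level $n$, and the $J$ of step $j+1$ is by construction a sub-scheme of the $J$ of step $j$. Hence the sequence of descriptors is Cauchy (in the natural sense on nested schemes) and its limit is a scheme in which every $I(k2^{-n})$ has collapsed to a single point, which puts us in the hypothesis of Lemma~\ref{lem:main Holder} for level $n+1$ with $j=1$.

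The quantitative side rests on estimate (\ref{eq:intExin Holder}): each invocation at $(n,j)$ perturbs $\int_{E_{\xi,n}}\mathbb{E}(f\circ\phi_{I})D_{r}(x-\xi)\,dx$ by at most $C 2^{-c(j+|n-u|)}$ for every $r\in[2^{u-1},2^{u})$ and every $\xi\in 2^{-u-2}\mathbb{Z}\cap[0,1)$. Summing the geometric series $\sum_{n\ge 1}\sum_{j\ge 1}2^{-c(j+|n-u|)}$ gives a total perturbation that is uniformly bounded in $r$ and $\xi$. The starting integral $\int\mathbb{E}(f\circ\phi_{I})D_{r}(x-\xi)\,dx$ is bounded by the classical Fourier-decay argument for H\"older functions: Lemma~\ref{lem:densities} gives H\"older regularity of $x\mapsto\mathbb{E}(f(\phi_{I}(x)))$ on every compact subinterval of $(0,1)$, with the mild endpoint degeneration handled by a direct Dini-type estimate. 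The contribution from the complement $[0,1]\setminus E_{\xi,n}$, consisting of at most two intervals of total length $\le 2^{3-n}$ around the peak of $D_{r}$, is bounded by $\|f\|_{\infty}\int_{|x-\xi|\le 2^{3-n}}|D_{r}(x-\xi)|\,dx=O(1)$ once $n\ge u$, and is absorbed into the final constant.

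At the end of the process one obtains a deterministic $\psi$ for which $S_{r}(f\circ\psi)(\xi)$ is uniformly bounded at every $\xi\in 2^{-u-2}\mathbb{Z}$ with $r\in[2^{u-1},2^{u})$. To pass from this dyadic grid to arbitrary $x\in\mathbb{T}$ I would invoke Bernstein's inequality: $S_{r}(f\circ\psi)$ is a trigonometric polynomial of degree $r\le 2^{u}$, so its derivative has sup norm at most $r\|S_{r}(f\circ\psi)\|_{\infty}$, and interpolation between two adjacent dyadic neighbours spaced $2^{-u-2}$ apart costs only a bounded multiplicative factor. I expect the main technical obstacle to lie in the limit within each fixed level $n$: one must verify that the nested sequence of schemes converges in a sense strong enough to interchange with the Dirichlet-kernel integration, that estimate (\ref{eq:intExin Holder}) passes to the limit $j\to\infty$ so that the accumulated perturbation at level $n$ is the geometric sum $\sum_{j\ge 1}C 2^{-c(j+|n-u|)}$, and that the limit scheme genuinely has all intervals at level $n$ collapsed to single points, meeting the hypothesis required to invoke Lemma~\ref{lem:main Holder} again at level $n+1$.
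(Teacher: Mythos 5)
Your proposal is essentially the paper's own proof: the same iterative application of Lemma~\ref{lem:main Holder} (sweeping over dyadic levels $n$ and halving each interval via $j=1,2,\dots$ until it collapses), the same geometric summation of the $C2^{-c(j+|n-u|)}$ errors, and the same Bernstein-inequality step to pass from the dyadic grid $\xi\in2^{-u-2}\mathbb{Z}$ to all of $\mathbb{T}$. The minor bookkeeping differences (you treat the starting integral and the $[0,1]\setminus E_{\xi,n}$ contributions as separate objects to bound rather than absorbing them into the telescope, and you phrase the concern about passing to the $j\to\infty$ limit as an open worry rather than resolving it via the pointwise stabilization of the descriptors) do not change the structure of the argument.
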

\begin{proof}
[Proof sketch]Fix $q$ sufficiently small such that any RH-descriptor
$I$ with $I(d)\subseteq[\frac{1}{2}-q,\frac{1}{2}+q]$ gives rise
to a homeomorphism $\phi_{I}$ which is $\lambda$-H\"older surely
(not just almost surely). Now construct a sequence of RH-descriptors
as follows. We start with $I_{0}\equiv[\frac{1}{2}-q,\frac{1}{2}+q]$.
We invoke lemma \ref{lem:main Holder} with $I_{\textrm{lemma \ref{lem:main Holder}}}=I_{0}$,
$n_{\textrm{lemma \ref{lem:main Holder}}}=1$ and $j_{\textrm{lemma \ref{lem:main Holder}}}=1$.
We denote the output of the lemma (i.e.\ $J_{\textrm{lemma \ref{lem:main Holder}}}$)
by $I_{0,1}$. This halves the length of the interval at $\frac{1}{2}$
so now we have $|I_{0,1}(\frac{1}{2})|=q$. We then apply lemma \ref{lem:main Holder}
again, again with $n_{\textrm{lemma \ref{lem:main Holder}}}=1$ but
this time with $j_{\textrm{lemma \ref{lem:main Holder}}}=2$ and $I_{\textrm{lemma \ref{lem:main Holder}}}=I_{0,1}$,
and denote $I_{0,2}\coloneqq J_{\textrm{lemma \ref{lem:main Holder}}}$
(which will have $|I_{0,2}(\frac{1}{2})|=\frac{1}{2}q$). We continue
like this, shrinking the interval at $\frac{1}{2}$ more and more
and finally define 
\[
I_{1}(d)=\lim_{j\to\infty}I_{0,j}(d).
\]
We get that $I_{1}(\frac{1}{2})$ is a single point while $I_{1}(d)=[\frac{1}{2}-q,\frac{1}{2}+q]$
for all $d\ne\frac{1}{2}.$ This makes $I_{1}$ suitable as entry
to lemma \ref{lem:main Holder} with $n_{\textrm{lemma \ref{lem:main Holder}}}=2$
and $j_{\textrm{lemma \ref{lem:main Holder}}}=1$. Again we apply
lemma \ref{lem:main Holder} infinitely many times and get in the
limit an RH descriptor $I_{2}$ with $I_{2}(\frac{1}{4})$, $I_{2}(\frac{1}{2})$
and $I_{2}(\frac{3}{4})$ all degenerate. We continue this process
infinitely many times and get an 
\[
I_{\infty}(d)=\lim_{n\to\infty}I_{n}(d)
\]
which is completely degenerate, i.e.\ corresponds to a single homeomorphism
$\phi$. This $\phi$ will be $\lambda$-H\"older because it is in
the support of $\phi_{I_{0}}$.

To estimate 
\[
\int_{0}^{1}(f\circ\phi)\cdot D_{r}
\]
we simply sum the differences coming from lemma \ref{lem:main Holder}.
Since the error there is $C2^{-c(j+|n-u|)}$, the sum over $j$ simply
gives $C2^{-c|n-u|}$. A crude estimate for the difference between
$E_{n,\xi}$ and $E_{n+1,\xi}$ gives that this can also be bounded
by $C2^{-c|n-u|}$. We get, for every $u\in\mathbb{N}$, every $r\in\{2^{u-1},\dotsc,2^{u}-1\}$
and every $\xi\in2^{-u-1}\mathbb{Z}\cap[0,1)$ that
\[
\bigg|\int_{0}^{1}(f\circ\phi)\cdot D_{r}(\xi)\bigg|\le C\sum_{n=1}^{\infty}2^{-c|n-u|}
\]
which is bounded by a constant independent of $u$. Applying Bernstein's
inequality shows that the estimate holds for all $\xi\in[0,1]$. The
theorem is proved.
\end{proof}
\phantomsection \label{pg:discuss qf}What is needed to go from theorem
\ref{thm:Holder} to our main theorem? Clearly we can no longer allow
our random homeomorphism the flexibility to change by a constant proportion
(our $q$) at every step, as that could never be absolutely continuous.
We have to allow more flexibility where our function $f$ fluctuates
a lot, and less flexibility where $f$ is more flat. In other words,
we need to make $q$ into a function of the relevant dyadic rational
$d$, such that `the local $q$' depends on the behaviour of $f$
in the appropriate area. The `flatness' of $f$ is naturally encoded
using its Haar decomposition and indeed, the necessary condition for
the homeomorphism to be absolutely continuous is a bound for certain
sums of squares of Haar coefficients. Interestingly, these sums turn
out to be dyadic BMO functions, and can thus be estimated by the (dyadic)
John-Nirenberg inequality, see lemma \ref{lem:z Olv}.

Had we applied this `local $q$' strategy naively, we would have ruined
the nice local structure of $\phi$, i.e.\ the fact that every dyadic
interval can be handled with little knowledge of the surrounding,
a fact that was extremely useful in the proofs of lemmas \ref{lem:Fpm Holder}
and \ref{lem:int Delta small Holder}. Why would that ruin this local
structure? Because the local $q$ near $d$ needs to depend on $f$
in the vicinity of $\phi(d)$, which is a global property. We solve
this problem by replacing $\phi$ with $\phi^{-1}$, its inverse as
a homeomorphism. In other words, instead of first choosing $\phi(\frac{1}{2})$
(as we did in the proof sketch above) we first choose which $x$ will
have $\phi(x)=\frac{1}{2}$, and we choose it uniformly in an interval
$[\frac{1}{2}-q,\frac{1}{2}+q]$ where $q$ depends on the behaviour
of $f$ near $\frac{1}{2}$. We then choose $x_{2}$ such that $\phi(x_{2})=\frac{1}{4}$,
and we choose it uniformly in an interval symmetric around $x/2$,
whose width depends on the behaviour of $f$ near $\frac{1}{4}$,
and so on.

This change, however, breaks lemma \ref{lem:renormalisation simplified}
which requires the partition into intervals `in the $x$ axis' to
be uniform, but our replacement of $\phi$ with $\phi^{-1}$ made
the partition in the $y$ axis uniform, while the partition in the
$x$ axis becomes random. To fix this problem we do not break all
intervals at every step (i.e.\ whenever we move from $I_{n}$ to
$I_{n+1}$ in the proof of the theorem). Instead we break into halves
only intervals larger than a certain threshold, leaving the smaller
intervals fixed. The details of this can be seen in the beginning
of \S\ref{sec:Reducing-randomness}, page \pageref{sec:Reducing-randomness}.

Finally, the transition from `uniformly bounded Fourier series' to
`uniformly converging Fourier series' is not particularly problematic.
The modulus of continuity of $f$ has to be taken into account in
lemma \ref{lem:main} (the analog of lemma \ref{lem:main Holder}
in the proof of the main result). The proof of the theorem from lemma
\ref{lem:main} then proceeds by showing that $\int_{E_{\xi,n}}\mathbb{E}(f\circ\phi_{I_{n}})D_{r}$
is a good approximation of $\mathbb{E}(f(\phi_{I_{n}}(\xi)))\int_{E_{\xi_{n}}}D_{r}$.
As $n\to\infty$ the first integral converges to $\int_{0}^{1}(f\circ\phi_{I_{\infty}})D_{r}=S_{r}(f\circ\phi_{I_{\infty}};\xi)$,
the second integral converges to $\int_{0}^{1}D_{r}=1$ and of course
$\mathbb{E}(f(\phi_{I_{n}}(\xi)))\to f(\phi_{I_{\infty}}(\xi))$.

The details of all this fill the next 4 sections.

\section{A hierarchical probabilistic construction}

Our first lemma is proved using a hierarchical random construction,
in the spirit of Kashin (and also related to the Koml\'os conjecture,
as discussed in \S \ref{sec:A-proof-sketch}). Its formulation is
a bit strange because we had to add two parameters ($\alpha$ and
$\beta$) to make the induction work. We note that lemma \ref{lem:renormalisation simplified}
follows from it by simply setting $\alpha=1$ and $\beta=\frac{1}{50}$
(and increasing $M$ to 2, if necessary).
\begin{lem}
\label{lem:renormalisation}Let $\alpha\in(0.99,1]$, $\beta\in[\frac{1}{50},\frac{1}{25})$,
$\gamma>0$ and $M\ge2$ be some parameters. Let $n\in\mathbb{N}$
and let $v_{i,j}\in\mathbb{R}$ ($i\in\{0,\dotsc,n-1\}$ and $j\in\mathbb{N}$)
and let $l(j)\in\{0,\dotsc,n-1\}$ and $b(j)\in\mathbb{N}$ satisfy
\begin{gather*}
|v_{i,j}|\le\min\Big\{\frac{1}{(|(i-l(j))\bmod n|+1)^{\alpha}},\frac{1}{b(j)}\Big\},\\
|\{j:l(j)=l,b(j)=b\}|\le Mb^{\gamma}\qquad\forall b,l.
\end{gather*}
Then there exists a choice of $\varepsilon_{0},\dotsc,\varepsilon_{n-1}\in\{\pm1\}$
such that
\[
\Big|\sum_{i=0}^{n-1}\varepsilon_{i}v_{i,j}\Big|\le\frac{A(\gamma+1)\log M}{(\alpha-0.99)(\frac{1}{25}-\beta)b(j)^{\beta}}\qquad\forall j
\]
for some absolute constant $A$.
\end{lem}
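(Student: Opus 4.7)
My plan is to build the signs $\varepsilon_i$ by a hierarchical (renormalisation) construction: organise the index set dyadically and introduce one fresh layer of independent Rademacher signs at each scale, so that the total sum $S_j := \sum_i \varepsilon_i v_{i,j}$ decomposes telescopically into scale-by-scale increments which are subgaussian after conditioning. The two parameters $\alpha$ and $\beta$ are there precisely because the natural induction halves $n$, and the inductive hypothesis must be quantitative enough in both the decay of $v_{i,j}$ (encoded by $\alpha$) and the target exponent on $b(j)$ (encoded by $\beta$) to survive the passage from level $k$ to level $k+1$.

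Concretely, assume (by padding) that $n = 2^K$ and view $\{0,\dotsc,n-1\}$ as the leaves of a binary tree of depth $K$. To each internal node $B$ attach an independent Rademacher variable $\eta_B$ and set $\varepsilon_i := \prod_{B \ni i} \eta_B$. For a fixed $j$, decompose
\[
S_j \;=\; \sum_{k=1}^{K} \Delta_j^{(k)},
\]
where $\Delta_j^{(k)}$ is the contribution of the level-$k$ signs. Conditionally on all lower levels, $\Delta_j^{(k)}$ is a sum of independent centred terms $\eta_B \cdot w_{B,j}$, one per level-$k$ block, each bounded by $\sum_{i \in B}|v_{i,j}|$. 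Using the hypothesis $|v_{i,j}|\le \min\{(d_i+1)^{-\alpha},\,1/b(j)\}$ with $d_i = |(i-l(j))\bmod n|$, a direct computation splits at the scale $d \asymp b(j)^{1/\alpha}$ where the two bounds coincide and yields a geometric decay of the variance $\sigma_{k,j}^2 := \sum_B w_{B,j}^2$ away from the critical scale $k \asymp (\log b(j))/\alpha$; in particular $\sigma_{k,j}^2 \lesssim b(j)^{1/\alpha - 2} \cdot 2^{-c(\alpha)|k - k^*(j)|}$ for some $c(\alpha) > 0$ controlled by $\alpha - 0.99$. Hoeffding gives $\Pr(|\Delta_j^{(k)}| > t\sigma_{k,j}) \le 2e^{-t^2/2}$.

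The main difficulty is the union bound, because a naive count over all $j$ would produce a spurious $\log n$ in the final constant, whereas the target $A(\gamma+1)\log M \cdot (\alpha-0.99)^{-1}(1/25-\beta)^{-1}$ is dimension-free. To remove the $\log n$ one must localise: at level $k$ and for fixed $l \in \{0,\ldots,n-1\}$, only those level-$k$ blocks within distance $O(2^k)$ of $l$ contribute non-trivially to any $\Delta_j^{(k)}$ with $l(j)=l$, so the effective number of relevant constraints per $(k,l)$ is $O(Mb^\gamma)$ rather than $O(Mnb^\gamma)$. This localisation is exactly what the induction on $n$ (halving the block and re-invoking the lemma on the two halves, carrying $\alpha,\beta$ along) encodes. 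Choosing the Hoeffding threshold $t_{k,b} \asymp \sqrt{(\gamma+1)\log(Mb) + c(\alpha) |k-k^*(j)|}$ makes the failure probabilities summable in $k$, $l$ and $b$ and forces strict total probability $<1$, so some realisation of the $\eta_B$'s gives $|S_j| \le C/b(j)^\beta$ for every $j$.

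The hard part is this elimination of $\log n$ and the careful bookkeeping across scales that produces the specific constant $A(\gamma+1)(\log M)/[(\alpha-0.99)(1/25-\beta)]$: the factor $(\alpha-0.99)^{-1}$ comes from summing the geometric series in $k$ with ratio depending on $\alpha$, while $(1/25-\beta)^{-1}$ comes from absorbing the tails of the subgaussian deviations into a power $b(j)^{-\beta}$ as long as $\beta$ stays strictly below the variance exponent $1 - 1/(2\alpha) \approx 1/2$ (which is comfortable since $\beta < 1/25$). Everything else — the base case of the induction and the padding from general $n$ to a power of $2$ — is routine.
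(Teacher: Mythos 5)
Your overall strategy — a Kashin-style hierarchical renormalisation, choosing random Rademacher signs level by level and controlling the block sums by concentration plus a union bound — is the same one the paper uses, and you correctly identify the role of $\alpha,\beta$ as tracking the quantitative decay through the recursion. However, as written the sketch has real gaps in exactly the places that make the lemma work.

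\textbf{The additive decomposition is not well-defined.} With $\varepsilon_i=\prod_{B\ni i}\eta_B$ the randomness is multiplicative: conditioning on ``all lower levels'' and writing $S_j=\sum_B\eta_B w_{B,j}$ requires $w_{B,j}$ to be measurable with respect to the complementary levels, and then the pieces $\Delta_j^{(k)}$ for different $k$ are not independent and do not sum to $S_j$ in any natural way (the martingale decomposition with respect to revealing levels is trivial, since $\mathbb{E}[\varepsilon_i\mid\mathcal{F}_k]=0$ until all factors are known). What the lemma actually needs is a \emph{nested} bound: choose inner signs first, produce block sums $w_{s,j}$ satisfying a bound of the same form as the hypothesis (after normalising by $2\sigma K^{1/2-\alpha'}$), and then invoke the lemma again on the $w$'s as a fresh instance with $n'=\lceil n/K\rceil$ and modified $\alpha',\beta',\gamma',M'$. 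You cannot simply sum independent per-scale deviations.

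\textbf{The variance bound is unsupported and, as stated, false.} You bound $|w_{B,j}|$ by $\sum_{i\in B}|v_{i,j}|$. For a level-$k$ block of size $2^k$ at distance $d$ from $l(j)$ this gives $|w_{B,j}|\lesssim\min\{2^{k(1-\alpha)},2^k/b(j),2^kd^{-\alpha}\}$, and summing the squares over the $\approx n/2^k$ blocks yields $\sigma_{k,j}^2\lesssim 2^{2k(1-\alpha)}$ for $k$ above the critical scale — this \emph{grows} with $k$ (since $\alpha<1$), it does not decay geometrically away from $k^*$. Summing over $k$ up to $\log_2 n$ then reintroduces an $n$-dependence, exactly the thing you set out to avoid. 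The claimed decay $\sigma_{k,j}^2\lesssim b^{1/\alpha-2}2^{-c|k-k^*|}$ would only follow by bounding $w_{B,j}$ with the (much stronger) high-probability estimate obtained at lower levels, i.e.\ by the induction that your picture is trying to replace with a one-shot union bound.

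\textbf{Dyadic blocks ($K=2$) do not close the induction.} The paper's block size is $K\approx\theta^7$ with $\theta=\frac{(\gamma+1)\log M}{(\alpha-0.99)(\frac1{25}-\beta)}$, and this choice is essential. The recursive scaling factor is $\approx\sigma K^{1/2-\alpha'}$; with $K=2$ this is $\approx\sigma\cdot 2^{-0.49}>1$ (since $\sigma$ must be chosen large enough for the union bound to close), so the recursion does not contract. Moreover the blocks near $l(j)$ must be excised before the concentration step and their contribution $\lesssim\min\{K^{1-\alpha}\log K,K/b\}$ absorbed into $A\theta b^{-\beta}$; with $K=2$ this near term is $O(1)$ per level, and summing over $\log_2 n$ levels produces the $\log n$ that the large-$K$, one-shot recursion is designed to avoid. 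Your paragraph on ``localisation'' correctly names this issue but does not carry out the excision or the bookkeeping that makes the constant independent of $n$, and that bookkeeping is precisely the hard part of the proof.
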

Recall that mod $n$ means a number in $\{-\lfloor(n-1)/2\rfloor,\dotsc,\lfloor n/2\rfloor\}$.

\begin{proof}
The constant $A$ will be chosen later. We argue by induction on $n$.
The case $n=1$ is clear (if $A>1/\log2$), for all values of $\alpha$,
$\beta$, $\gamma$ and $M$. Assume therefore the claim has been
proved for all $n'<n$ and for all values of $\alpha'$, $\beta'$,
$\gamma'$ and $M'$.

Thus we are given $\alpha$, $\beta$, $\gamma$, $M$, $v$, $l$
and $b$. Let $\alpha'\coloneqq\frac{1}{2}(\alpha+0.99)$ and $\beta'\coloneqq\frac{1}{2}(\beta+1/25)$.
Let $K\ge2$ be some integer parameter to be chosen later. Divide
$\{0,\dotsc,n-1\}$ into blocks of size $K$ (the last block may be
smaller) and let $n'\coloneqq\lceil n/K\rceil$ be the number of blocks.
We will first chose signs $\delta_{1},\dotsc,\delta_{n}\in\{\pm1\}$
such that the sum in each block will be controlled. We denote these
sums by $w$, namely, for every $s\in\{0,\dotsc,n'-1\}$, $j\in\mathbb{N}$
we define 
\[
w_{s,j}\coloneqq\sum_{r=0}^{K-1}\delta_{sK+r}v_{sK+r,j}.
\]
If the last block is shorter than $K$, truncate the sum defining
$w$. We wish to choose the $\delta$ such that for every $s\in\{0,\dotsc,n'-1\}$
and every $j$ such that $|sK-l(j)\bmod n|\ge2K$ we have
\begin{gather}
\Big|w_{s,j}\Big|\le\min\Big\{\frac{\sigma K^{1/2}}{(|sK-l(j)\bmod n|+1)^{\alpha'}},\frac{\sigma K^{1/2}}{b(j)^{\beta/\beta'}}\Big\},\label{eq:bound wsjkl}\\
\sigma\coloneqq\sqrt{\lambda(\gamma+1)\Big(\frac{1}{\alpha-\alpha'}+\frac{1}{\beta'-\beta}+\log M\Big)}\nonumber 
\end{gather}
where $\lambda$ is some absolute constant to be defined shortly.
It will be convenient, when $|sK-l(j)\bmod n|<2K$, to denote $w_{s,j}=0$,
so let us do so. The choice of $\delta$ will be random, uniform,
i.i.d. Since the estimates for each block are independent of the other
blocks, let us fix the number of the block $s$. We divide the $j$
into two sets, $\mathcal{S}$ and $\mathcal{T}$ according to whether
$b(j)\ge(|sK-l(j)\bmod n|+1)^{\alpha'\beta'/\beta}$ (this is $\mathcal{S}$)
or not ($\mathcal{T}$). In the first case the second term in (\ref{eq:bound wsjkl})
is the smaller, so we need to estimate the probability that $|w_{s,j}|\le\sigma K^{1/2}b(j)^{-\beta/\beta'}$.
We use $|v_{sK+r,j}|\le1/b(j)$ and Bernstein's inequality for sums
of random variables tell us that 
\[
\mathbb{P}\Big(|w_{s,j}|\ge\frac{\sigma K^{1/2}}{b(j)^{\beta/\beta'}}\Big)\le2\exp(-c\sigma^{2}b(j)^{2(1-\beta/\beta')}).
\]
Now, for every $b$ there are no more than $Cb^{\beta/\beta'\alpha'}\le Cb^{2}$
relevant values of $l$ (recall that we fixed the value of $s$, and
note that we used the inequalities $\alpha'>0.99$ and $\beta<\beta'$).
Further, for each couple $(b,l)$ we have no more than $Mb^{\gamma}$
values of $j$, so all-in-all we have no more than $CMb^{\gamma+2}$
values of $j$ for $b$. A union bound gives
\begin{align*}
\mathbb{P}\Big(\exists j & \in\mathcal{S},|w_{s,j}|\ge\frac{\sigma K^{1/2}}{b(j)^{\beta/\beta'}}\Big)\\
 & \le\sum_{b=1}^{\infty}C_{1}Mb^{\gamma+2}\exp(-c\sigma^{2}b^{2(1-\beta/\beta')}).
\end{align*}
We claim that if $\lambda$ is sufficiently large then this sum is
small. This is a simple calculation but let us do it in detail nonetheless.
We write
\begin{align*}
\lefteqn{\exp(-c\sigma^{2}b^{2(1-\beta/\beta')})=}\qquad\\
 & =\exp\bigg(-c\lambda(\gamma+1)\Big(\frac{1}{\alpha-\alpha'}+\frac{1}{\beta'-\beta}+\log M\Big)b^{2(\beta'-\beta)/\beta'}\bigg)\\
 & \le M^{-c\lambda}\exp\Big(-\frac{c\lambda(\gamma+1)}{\beta'-\beta}b^{50(\beta'-\beta)}\Big).
\end{align*}
For any $\varepsilon>0$ we have $b^{\varepsilon}\ge e\varepsilon\log b$
and thus the second multiplicand is no more than $\exp(-c\lambda(\gamma+1)50e\log b)=b^{-c\lambda(\gamma+1)50e}$.
Pick $\lambda$ so large such that 
\[
M^{-c\lambda}\le\frac{1}{10\cdot C_{1}M}\qquad b^{-c\lambda(\gamma+1)50e}\le b^{-4-\gamma}
\]
(this can be done independently of $\gamma$ or $M$, recall that
$M\ge2$) and get
\[
\mathbb{P}\Big(\exists j\in\mathcal{S},|w_{s,j}|\ge\frac{\sigma K^{1/2}}{b(j)^{\beta/\beta'}}\Big)\le\frac{1}{10}\sum_{b=1}^{\infty}\frac{1}{b^{2}}<\frac{1}{4}.
\]
This finishes the estimate for $\mathcal{S}$.

In the case that $j\in\mathcal{T}$ i.e.\ that $b(j)<(|sK-l(j)\bmod n|+1)^{\alpha'\beta'/\beta}$
we use 
\[
|v_{sK+r,j}|\le(|sK+r-l(j)\bmod n|+1)^{-\alpha}\le C(|sK-l(j)\bmod n|+1)^{-\alpha}
\]
where the second inequality uses our assumption that $|sK-l(j)\bmod n|\ge2K$.
Again using Bernstein's inequality we have
\begin{multline*}
\mathbb{P}\Big(|w_{s,j}|>\frac{\sigma K^{1/2}}{(|sK-l(j)\bmod n|+1)^{\alpha'}}\Big)\\
\le2\exp\big(-c\sigma^{2}(|sK-l(j)\bmod n|+1)^{2(\alpha-\alpha')}\big).
\end{multline*}
For every value $q$ of $|sK-l(j)\bmod n|$ there are at most $2$
possible values of $l(j)$ which give it. The restriction $b<(|sK-l\bmod n|+1)^{\alpha'\beta'/\beta}$
gives that for every $q$ there are no more than $q^{\alpha'\beta'/\beta}\le q^{2}$
values of $b$, and for each couple $(b,l)$ there are no more than
$Mb^{\gamma}\le M(q^{\alpha'\beta'/\beta})^{\gamma}\le Mq^{2\gamma}$
possibilities for $j$. All-in-all the value $q$ corresponds to at
most $CMq^{2\gamma+2}$ values of $j$. Thus we can bound 
\begin{multline*}
\mathbb{P}\Big(\exists j\in\mathcal{T}\text{ s.t. }|w_{s,j}|\ge\frac{\sigma K^{1/2}}{(|sK-l(j)\bmod n|+1)^{\alpha'}}\Big)\\
\le\sum_{q=1}^{\infty}2\exp(-c\sigma^{2}q^{2(\alpha-\alpha')})\cdot CMq^{2\gamma+2}
\end{multline*}
and a calculation similar to the one done above for $\mathcal{S}$
shows that the value of $\sigma$ ensures that this sum is also smaller
than $\frac{1}{4}$, for $\lambda$ sufficiently large. Fix $\lambda$
to satisfy this requirement (uniformly in $\alpha$, $\alpha'$, $M$
and $\gamma$) and the previous one. Since the sum of the probabilities
of all dissenting events is less than $1$, we see that some choice
of $\delta$ for which (\ref{eq:bound wsjkl}) will be satisfied exists.

Before continuing we make one modification of (\ref{eq:bound wsjkl}).
Recall that we defined $w_{s,j}=0$ whenever $|sK-l(j)\bmod n|<2K$.
If not, i.e.\ if $|sK-l(j)\bmod n|\ge2K$, then we can write 
\[
|sK-l(j)\bmod n|\ge\tfrac{1}{2}K|s-\lfloor l(j)/K\rfloor\bmod n'|
\]
allowing to rewrite (\ref{eq:bound wsjkl}) as
\begin{gather}
\Big|w_{s,j}\Big|\le\min\Big\{\frac{2\sigma K^{1/2-\alpha'}}{(|s-\lfloor l(j)/K\rfloor\bmod n'|+1)^{\alpha'}},\frac{\sigma K^{1/2}}{b(j)^{\beta/\beta'}}\Big\}.\label{eq:bound wsjkl-1}
\end{gather}
We are now in a position to apply the lemma inductively. We need numbers
$v_{i',j,}'$, $i'\in\{0,\dotsc,n'-1\}$ and functions $l':\mathbb{N}\to\{0\dotsc,n'-1\}$,
$b':\mathbb{N}\to\mathbb{N}$, satisfying 
\begin{equation}
|v_{i',j}'|\le\min\left\{ \frac{1}{(|i'-l'(j)\bmod n'|+1)^{\alpha'}},\frac{1}{b'(j)}\right\} \label{eq:vpipj}
\end{equation}
and the other requirements (recall that $\alpha'=\frac{1}{2}(\alpha+0.99)$
and $\beta'=\frac{1}{2}(\beta+1/25)$). We will use
\[
v_{i',j}=\frac{w_{s,j}}{2\sigma K^{1/2-\alpha'}}
\]
With $i'=s$ and $l'(j)=\lfloor l(j)/K\rfloor$, and 
\[
b'(j)=\max\bigg\{\bigg\lfloor\frac{2b(j)^{\beta/\beta'}}{K^{\alpha'}}\bigg\rfloor,1\bigg\}.
\]
With these definitions (\ref{eq:vpipj}) follows immediately from
(\ref{eq:bound wsjkl-1}), so the only thing we need to check is,
for an arbitrary given $b'$ and $l'$, for how many values of $j$
do we have $b'(j)=b'$, $l'(j)=l'$. We first count how many $b$
do we have such that
\begin{equation}
\frac{2b^{\beta/\beta'}}{K^{\alpha'}}\in[b',b'+1)\label{eq:kkprime}
\end{equation}
or in $[0,2)$ in the case of $b'=1$. The number of such $b$ can
be estimated by a simple derivative bound giving 
\[
\tfrac{1}{2}K^{\alpha'}\frac{\beta'}{\beta}\Big(\tfrac{1}{2}K^{\alpha'}(b'+1)\Big)^{\beta'/\beta-1}+1\le CK^{2}b'
\]
where we estimated $\beta'/\beta\le2$, $b'+1\le2b'$ and $\alpha'\le1$.
The bound $CK^{2}b'$ for the number of $b$ satisfying (\ref{eq:kkprime})
also holds in the case that $b'=1$ (in which case we need to verify
how many $b$ satisfy $2b^{\beta/\beta'}K^{-\alpha'}\in[0,2)$, but
only the constant is affected). Adding the facts that each value of
$l'$ corresponds to at most $K$ different values of $l$; and that
each couple $(b,l)$ corresponds to $Mb^{\gamma}$ different values
of $j$ we get that the number of $j$ for each couple $(b',l')$
is at most 
\[
K\cdot M\bigg(\tfrac{1}{2}K^{\alpha'}(b'+1)\Big)^{\gamma\beta'/\beta}\cdot CK^{2}b'\le C_{2}MK^{2}\big(Kb'\big)^{2\gamma+1}
\]
where we again bounded $\beta'/\beta\le2$, $b'+1\le2b'$ and $\alpha'\le1$.
We are ready to apply our induction assumption! We apply it with
$\alpha'$, $\beta'$, $\gamma'\coloneqq2\gamma+1$, 
\[
M'\coloneqq C_{2}MK^{2\gamma+3},
\]
$n'$ and the vectors $w_{s,j}/2\sigma K^{1/2-\alpha'}$. We get
that there exists some signs $\mu_{0},\dotsc,\mu_{n'-1}\in\{\pm1\}$
such that
\[
\Big|\sum_{s=0}^{n'-1}\mu_{s}\frac{w_{s,j}}{2\sigma K^{1/2-\alpha'}}\Big|\le\frac{A(\gamma'+1)\log M'}{(\alpha'-0.99)(\frac{1}{25}-\beta')b'(j)^{\beta'}}\qquad\forall j.
\]
Defining $\varepsilon_{i}=\mu_{\lfloor i/K\rfloor}\delta_{i}$ gives
\begin{equation}
\bigg|\sum_{i=0}^{n-1}\varepsilon_{i}v_{i,j}\bigg|\le C\min\Big\{ K^{1-\alpha}\log K,\frac{K}{b(j)}\Big\}+\frac{2A(\gamma'+1)\sigma K^{1/2-\alpha'}\log M'}{(\alpha'-0.99)(\frac{1}{25}-\beta')b'(j)^{\beta'}}.\label{eq:toomanyprimes}
\end{equation}
The first term is from the fact that we zeroed out $w$ when $|j-sK|<2K$.
When moving back to $v$ we need to bound these $v$ and we bound
them using a naive absolute value bound (we use here that $\sum_{i=1}^{K}i^{-\alpha}\le CK^{1-\alpha}\log K$
with $C$ uniform in $\alpha\le1$, a fact which is easy to check).

To make the calculation manageable, we will now bound the second term
in (\ref{eq:toomanyprimes}) by a sequence of quite rough bounds.
For $M'$ we write
\begin{align*}
\log M' & =C+\log M+(2\gamma+3)\log K\\
 & \le C(\gamma+1)\log M\log K.
\end{align*}
For $b'$ we have 
\begin{align*}
(b')^{-\beta'} & \le2(b'+1)^{-\beta'}\stackrel{\textrm{(\ref{eq:kkprime})}}{\le}2\Big(\frac{2b^{\beta/\beta'}}{K^{\alpha'}}\Big)^{-\beta'}\le2b^{-\beta}K^{\beta'}
\end{align*}
For $\sigma$ we bound
\[
\sigma\le C\frac{\gamma+1}{(\alpha-0.99)(\frac{1}{25}-\beta)}\log M.
\]
Finally $\gamma'+1=2(\gamma+1)$, $(\alpha'-0.99)=\frac{1}{2}(\alpha-0.99)$
and $(\frac{1}{25}-\beta')=\frac{1}{2}(\frac{1}{25}-\beta)$. Inserting
everything into (\ref{eq:toomanyprimes}) gives
\begin{align*}
\frac{(\gamma'+1)\sigma K^{1/2-\alpha'}\log M'}{(\alpha'-0.99)(\frac{1}{25}-\beta')b'^{\beta'}} & \le CK^{1/2-\alpha'+\beta'}\log K\frac{(\gamma+1)^{3}\log^{2}M}{(\alpha-0.99)^{2}(\frac{1}{25}-\beta)^{2}b^{\beta}}\\
 & \le CK^{-1/3}\theta^{3}b^{-\beta}\qquad\theta:=\frac{(\gamma+1)\log M}{(\alpha-0.99)(\frac{1}{25}-\beta)}.
\end{align*}
Together with (\ref{eq:toomanyprimes}) we get
\begin{equation}
\bigg|\sum_{i=0}^{n-1}\varepsilon_{i}v_{i,j}\bigg|\le C\min\Big\{ K^{1-\alpha}\log K,\frac{K}{b(j)}\Big\}+C_{3}AK^{-1/3}\theta^{3}b(j)^{-\beta}\label{eq:sum eps i v ij two terms}
\end{equation}
Recall that we need to show $|\sum\varepsilon_{i}v_{i,j}|\le A\theta b(j)^{-\beta}$.
Now is the time to choose $K$. We choose $K=\lceil C_{4}\theta^{7}\rceil$
for some constant $C_{4}$ sufficiently large such that 
\[
K^{-1/3}\le\frac{1}{2C_{3}\theta^{2}}
\]
(and also such that $K\ge2$). To bound the first term in (\ref{eq:sum eps i v ij two terms}),
note that for $b<\theta^{15}$ we have 
\[
\theta b^{-\beta}\ge\theta b^{-1/25}\ge\theta^{2/5}\ge cK^{2/35}\ge cK^{1-\alpha}\log K
\]
while for $b\ge\theta^{15}$ we have
\[
\theta b^{-\beta}=\frac{\theta b^{1-\beta}}{b}\ge\frac{\theta^{1+15\frac{24}{25}}}{b}\ge c\frac{K}{b}
\]
so for any $b$ we get $\min\{K^{1-\alpha}\log K,\frac{K}{b}\}\le C\theta b^{-\beta}$.
Inserting both estimates into (\ref{eq:sum eps i v ij two terms})
gives
\[
\bigg|\sum_{i=0}^{n-1}\varepsilon_{i}v_{i,j}\bigg|\le C_{5}\theta b(j)^{-\beta}+\frac{1}{2}A\theta b(j)^{-\beta}.
\]
Choosing $A=2C_{5}$ completes the induction, and proves the lemma.
\end{proof}

\section{\label{sec:every q}A family of homeomorphisms}

A dyadic rational is a number $d$ of the form $k2^{-n}$ for some
integers $k$ and $n$. If $k$ is odd we define $\rk(d)=n$ and let
also $\rk(0)=-\infty$. Let $\theta$ be a map from the dyadic rationals
in $(0,1)$ into $[\frac{1}{4},\frac{3}{4}]$. Let $n$ be an integer.
Our goal is to define for every $\theta$ and $n$ a homeomorphism
$\psi_{\theta,n}:[0,1]\to[0,1]$. It turns out that it is a little
more natural to first define $\psi_{\theta,n}^{-1}$ which is a `Dubins-Freedman
style' homeomorphism, so we do that, and then define $\psi$ as its
inverse. But first we need some notation.

We define
\begin{gather*}
T_{d}(x)\coloneqq\max\left\{ 1-2^{\rk(d)}|x-d|,0\right\} \\
\nu_{d}\psi\coloneqq\psi(d+2^{-\rk(d)})-\psi(d-2^{-\rk(d)})
\end{gather*}
In words, $T_{d}$ is a triangle function based on a certain dyadic
interval $I$ (whose centre is $d$). We will only apply the functional
$\nu_{d}$ to increasing functions, and then $\nu_{d}\psi$ is the
variation of $\psi$ over $I$.

We may now define $\psi_{\theta,n}^{-1}$. The definition is by induction
on $n$, with the induction base being
\[
\psi_{\theta,0}^{-1}(x)=x\qquad\forall x\in[0,1].
\]
Assume $\psi_{\theta,n-1}$ has been defined for all $\theta$. We
now define for $n\ge1$,
\begin{equation}
\psi_{\theta,n}^{-1}(x)=\psi_{\theta,n-1}^{-1}(x)+\sum_{\rk(d)=n}T_{d}(x)(\theta(d)-\tfrac{1}{2})\nu_{d}\psi_{\theta,n-1}^{-1}\label{eq:psi by triangles}
\end{equation}
where the sum is over all $d\in(0,1)$ of rank $n$ (totally $2^{\rk(d)-1}$
terms). An equivalent definition is to write, for every $k\in\{0,\dotsc,2^{n}\}$,
\begin{equation}
\psi_{\theta,n}^{-1}(k2^{-n})=\begin{cases}
\psi_{\theta,n-1}^{-1}(k2^{-n}) & \mathrlap{k\text{ even}}\\
\psi_{\theta,n-1}^{-1}((k-1)2^{-n})+\theta(k2^{-n})\;\cdot & \qquad\\
\quad(\psi_{\theta,n-1}^{-1}((k+1)2^{-n})-\psi_{\theta,n-1}^{-1}((k-1)2^{-n})) & \mathrlap{k\text{ odd}}
\end{cases}\label{eq:psi by interpolation}
\end{equation}
and complete linearly between these points. In particular we note
that $\psi_{\theta,n}^{-1}(d)$ stabilises for all dyadic $d$, as
$n\to\infty$, in fact when $n>\rk(d)$.

To understand why we call $\psi^{-1}$ a Dubins-Freedman style homeomorphism
just note that if we were to take $\theta$ uniform in $[0,1]$ (which
we do not allow here) then the result is exactly the Dubins-Freedman
homeomorphism of level $n$.

Finally, let us remark that $\psi^{-1}$ has a recursive formula (which
could also have been used to define $\psi^{-1}$). To state it define
\begin{gather}
A\coloneqq\theta\left(\tfrac{1}{2}\right)\nonumber \\
\theta^{-}(d)=\theta\left(\tfrac{1}{2}d\right)\qquad\theta^{+}(d)=\theta\left(\tfrac{1}{2}+\tfrac{1}{2}d\right)\label{eq:theta pm}
\end{gather}
With these definitions we get 
\begin{equation}
\psi_{\theta,n}^{-1}(x)=\begin{cases}
A\psi_{\theta^{-},n-1}^{-1}\left(2x\right) & x\le\tfrac{1}{2}\\
A+(1-A)\psi_{\theta^{+},n-1}^{-1}\left(2x-1\right) & \text{otherwise.}
\end{cases}\label{eq:induct psi-1-1}
\end{equation}
Similarly we have for $\psi$ itself
\begin{equation}
\psi_{\theta,n}(x)=\begin{cases}
\tfrac{1}{2}\psi_{\theta^{-},n-1}\left(\frac{x}{A}\right) & x\le A\\
\tfrac{1}{2}+\tfrac{1}{2}\psi_{\theta^{+},n-1}\left(\frac{x-A}{1-A}\right) & \text{otherwise.}
\end{cases}\label{eq:defAB}
\end{equation}
Both formulas (which are clearly equivalent) are a special case of
lemma \ref{lem:local psi inverse} below (used for $I=[0,\frac{1}{2}]$
and $I=[\frac{1}{2},1]$). 

We say that $I$ is a dyadic interval if there exist some $n\in\mathbb{Z}^{+}$
and $k\in\{1,\dotsc,2^{n}\}$ such that $I=[(k-1)2^{-n},k2^{-n}]$.
We call $n$ the rank of $I$. For every interval $I\subseteq[0,1]$
we define $L_{I}$ to be the affine increasing map taking $[0,1]$
onto $I$. The following lemma (whose proof is merely playing around
with the definitions) formalises the local structure of $\psi$ and
$\psi^{-1}$.
\begin{lem}
\label{lem:local psi inverse}Let $\theta$ and $n$ be as above.
Let I be a dyadic interval $I=[(k-1)2^{-m},k2^{-m}]$ with $m\le n$.
Denote $\alpha=\psi_{\theta,n}^{-1}((k-1)2^{-m})$ and $\beta=\psi_{\theta,n}^{-1}(k2^{-m})$.
Then for every $\theta$, $n$, $I$ and for every $x\in I$,
\begin{equation}
\psi_{\theta,n}^{-1}(x)=\alpha+(\beta-\alpha)(\psi_{\theta\circ L_{I},n-m}^{-1}(L_{I}^{-1}(x)).\label{eq:local psi inverse}
\end{equation}
\end{lem}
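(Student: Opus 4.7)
I would prove this by induction on the rank $m$ of the dyadic interval $I$, with $n$ (and $\theta$) fixed, using the recursion (\ref{eq:induct psi-1-1}) as the one-step engine.

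The base case $m=0$ is immediate: $I=[0,1]$ forces $\alpha=0$, $\beta=1$, $L_I=\mathrm{id}$, and $\theta\circ L_I=\theta$, so both sides of (\ref{eq:local psi inverse}) equal $\psi^{-1}_{\theta,n}(x)$. For the inductive step, assume the claim for all dyadic intervals of rank $m-1$, and let $I$ have rank $m\le n$. Let $I'$ be the dyadic parent of $I$, with endpoints mapped by $\psi^{-1}_{\theta,n}$ to $\alpha'$ and $\beta'$. By the inductive hypothesis applied to $I'$,
\begin{equation}
\psi^{-1}_{\theta,n}(x)=\alpha'+(\beta'-\alpha')\,\psi^{-1}_{\theta\circ L_{I'},\,n-m+1}\bigl(L_{I'}^{-1}(x)\bigr)\qquad(x\in I').\label{eq:plan-parent}
\end{equation}

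Now write $\eta\coloneqq\theta\circ L_{I'}$ and apply the recursion (\ref{eq:induct psi-1-1}) to $\psi^{-1}_{\eta,n-m+1}$. Say $I$ is the left half of $I'$ (the right-half case is symmetric); then $L_{I'}^{-1}(I)=[0,\tfrac12]$, so on this region (\ref{eq:induct psi-1-1}) gives $\psi^{-1}_{\eta,n-m+1}(y)=A\,\psi^{-1}_{\eta^-,n-m}(2y)$ with $A=\eta(\tfrac12)=\theta(\text{midpoint of }I')$. The two identities $L_I(t)=L_{I'}(t/2)$ and hence $2L_{I'}^{-1}(x)=L_I^{-1}(x)$, together with $\eta^-(d)=\eta(d/2)=\theta(L_I(d))$, i.e.\ $\eta^-=\theta\circ L_I$, let me rewrite (\ref{eq:plan-parent}) as
\[
\psi^{-1}_{\theta,n}(x)=\alpha'+(\beta'-\alpha')\,A\,\psi^{-1}_{\theta\circ L_I,\,n-m}\bigl(L_I^{-1}(x)\bigr).
\]

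It remains to match constants. Since $I$ shares its left endpoint with $I'$, $\alpha=\alpha'$. The right endpoint of $I$ is the midpoint of $I'$, so (\ref{eq:plan-parent}) evaluated there (using $L_{I'}^{-1}(\text{midpoint})=\tfrac12$ and $\psi^{-1}_{\eta,n-m+1}(\tfrac12)=A\,\psi^{-1}_{\eta^-,n-m}(1)=A$) yields $\beta=\alpha'+(\beta'-\alpha')A$, hence $\beta-\alpha=(\beta'-\alpha')A$, completing the inductive step. The right-half case is identical except one uses the second branch of (\ref{eq:induct psi-1-1}) and the identity $\eta^+=\theta\circ L_I$.

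The proof is really just unwinding definitions, so there is no genuine obstacle; the only point that requires care is verifying the two affine-coordinate identities $L_I^{-1}=2L_{I'}^{-1}$ (resp.\ $2L_{I'}^{-1}-1$) and the consequence $\eta^\pm=\theta\circ L_I$, which is the mechanism that makes the recursion compose correctly across scales.
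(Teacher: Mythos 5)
Your argument is correct and takes a genuinely different route. The paper fixes $I$ (hence $m$) and inducts downward on $n$, with base case $n=m$ (where $\psi_{\theta,n}^{-1}$ is linear on a rank-$n$ interval), and the workhorse is the triangle decomposition (\ref{eq:psi by triangles}) localized to $d\in I$. You instead fix $n,\theta$ and induct upward on the rank $m$, with a trivial base case $m=0$, and the one-step recursion (\ref{eq:induct psi-1-1}) does the work of descending into a child interval; the affine identities $L_I=L_{I'}\circ(t\mapsto t/2)$ and $\eta^{\pm}=\theta\circ L_{I}$ are exactly the right bookkeeping, and the matching of $\alpha,\beta$ to $\alpha',\beta',A$ is clean. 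In exchange, your version needs the $m=n$ case to be a genuine step of the induction rather than the base, which it correctly is.

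There is one logical wrinkle you should patch. As written, you quote (\ref{eq:induct psi-1-1}) as a known fact. But the paper does not prove (\ref{eq:induct psi-1-1}) independently; it states it as a \emph{consequence} of the very lemma you are proving (``a special case of lemma \ref{lem:local psi inverse} ... used for $I=[0,\tfrac12]$ and $I=[\tfrac12,1]$''). So taken together with the paper's text, your proof is circular unless you first establish (\ref{eq:induct psi-1-1}) directly from (\ref{eq:psi by triangles}) or (\ref{eq:psi by interpolation}). This is a short argument (by induction on the level index: for $x\le\tfrac12$ the only triangles $T_d$ with $\rk(d)\ge2$ that are nonzero have $d\le\tfrac12$, and the $\rk(d)=1$ triangle produces the factor $2A$; then match the variations $\nu_d$ across the rescaling), and it is essentially the $m=1$ instance of your inductive step done by hand. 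Once you add that paragraph, your proof is self-contained.
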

A short version of (\ref{eq:local psi inverse}) is 
\[
\left.\psi_{\theta,n}^{-1}\right|_{I}=L_{\psi_{\theta,n}^{-1}(I)}\circ\psi_{\theta\circ L_{I},n-m}^{-1}\circ L_{I}^{-1}.
\]
For $\psi$ itself this can be written as
\begin{equation}
\left.\psi_{\theta,n}\right|_{\psi_{\theta,n}^{-1}(I)}=L_{I}\circ\psi_{\theta\circ L_{I},n-m}\circ L_{\psi_{\theta,n}^{-1}(I)}^{-1}.\label{eq:psi local}
\end{equation}

\begin{proof}
We first note that $\psi_{\theta,n}^{-1}(x)$ is linear on every dyadic
interval of order $n$, as it is a sum of piecewise linear functions
each of which has its jumps of the derivative on dyadic rationals
of rank $\le n$. This shows the case that $m=n$ as the left hand
side of (\ref{eq:local psi inverse}) in linear on $I$, the right
hand side of (\ref{eq:local psi inverse}) is linear on $I$ (as $\psi_{\theta,0}^{-1}(x)=x$
always) and they are equal at the boundaries of $I$.

We will prove the claim by induction on $n$, with the base case being
the case of $n=m$ just proved. We first note that for $x\in I$ the
only non-zero terms in (\ref{eq:psi by triangles}) are those for
which $d\in I$ and hence for every $x\in I$
\begin{align}
 & \psi_{\theta,n}^{-1}(x)\stackrel{\textrm{(\ref{eq:psi by triangles})}}{=}\psi_{\theta,n-1}^{-1}(x)+\sum_{\substack{\rk(d)=n\\
d\in I
}
}T_{d}(x)(\theta(d)-\tfrac{1}{2})\nu_{d}\psi_{\theta,n-1}^{-1}\label{eq:only d in I}\\
 & =\psi_{\theta,n-1}^{-1}(x)+\sum_{\substack{\mathclap{{\rk(d)=n-m}}\\
d\in(0,1)
}
}T_{L_{I}(d)}(x)(\theta(L_{I}(d))-\tfrac{1}{2})\nu_{L_{I}(d)}\psi_{\theta,n-1}^{-1}.\nonumber 
\end{align}
(we used here that $\rk(L_{I}(d))=\rk(d)+m$ for all $d\in(0,1)$,
and that the boundaries of the interval do not have rank $n$ so we
may omit them). Recall that $\psi_{\theta,n}^{-1}=\psi_{\theta,n-1}^{-1}$
on each dyadic rational of rank $<n$ and in particular on the boundaries
of $I$. Hence $\alpha$ and $\beta$ do not depend on whether we
define them with $n$ or $n-1$. This allows to apply the induction
assumption and get 
\begin{align*}
\psi_{\theta,n-1}^{-1}(x) & =\alpha+(\beta-\alpha)\psi_{\theta\circ L_{I},n-m-1}^{-1}(L_{I}^{-1}(x))\\
\nu_{L_{I}(d)}(\psi_{\theta,n-1}^{-1}) & =(\beta-\alpha)\nu_{d}(\psi_{\theta\circ L_{I},n-m-1}^{-1}).
\end{align*}
Similarly
\begin{gather*}
T_{L_{I}(d)}(x)=T_{d}(L_{I}^{-1}(x)).
\end{gather*}
Inserting all these into (\ref{eq:only d in I}) gives
\begin{multline*}
\psi_{\theta,n}^{-1}(x)=\alpha+(\beta-\alpha)\psi_{\theta\circ L_{I},n-m-1}^{-1}(L_{I}^{-1}(x))+(\beta-\alpha)\times\\
\sum_{\substack{\rk(d)=n-m\\
d\in(0,1)
}
}T_{d}(L_{I}^{-1}(x))((\theta\circ L_{I})(d)-\tfrac{1}{2})\nu_{d}(\psi_{\theta\circ L_{I},n-m-1}^{-1}).
\end{multline*}
and by (\ref{eq:psi by triangles}) this is exactly 
\[
\alpha+(\beta-\alpha)\psi_{\theta\circ L_{I},n-m}^{-1}(L_{I}^{-1}(x)),
\]
as needed.
\end{proof}
\begin{lem}
\label{lem:Holder}If $\theta(d)\in[\epsilon,1-\epsilon]$ for some
$\epsilon>0$ and all dyadic $d$ then $\psi_{\theta,n}$ converges
uniformly as $n\to\infty$ and the limit is a H\"older homeomorphism
of $[0,1].$ As $\epsilon\to\frac{1}{2}$, the H\"older exponent
tends to 1.
\end{lem}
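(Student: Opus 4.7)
My plan rests on a single geometric estimate: for every dyadic interval $I$ of rank $m\le n$, the image $\psi_{\theta,n}^{-1}(I)$ is an interval whose length lies in $[\epsilon^m,(1-\epsilon)^m]$. I would establish this by induction on $m$, the base case $m=0$ being trivial. For the step, fix a rank-$m$ interval $I$ and let $\tilde I$ be its rank-$(m-1)$ parent; since $\psi_{\theta,n}^{-1}$ agrees with $\psi_{\theta,m}^{-1}$ on every dyadic rational of rank $\le m$, formula \eqref{eq:psi by interpolation} shows that $\psi_{\theta,n}^{-1}(\tilde I)$ is split at the image of the midpoint of $\tilde I$ in ratio $A:(1-A)$ with $A=\theta(\mathrm{midpoint}(\tilde I))\in[\epsilon,1-\epsilon]$. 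The induction hypothesis then gives the claimed bound on $|\psi_{\theta,n}^{-1}(I)|$.

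With this estimate in hand, uniform convergence of $\psi_{\theta,n}^{-1}$ follows immediately. For $n\ge m$, $\psi_{\theta,n}^{-1}$ and $\psi_{\theta,m}^{-1}$ agree on every dyadic rational of rank $\le m$, and both are monotone, so on any rank-$m$ dyadic interval $I$ the two functions take values inside the common image interval, giving $\|\psi_{\theta,n}^{-1}-\psi_{\theta,m}^{-1}\|_\infty\le(1-\epsilon)^m$. The uniform limit $\Psi^{-1}$ is continuous and monotone, and the lower bound $|\psi_{\theta,n}^{-1}(I)|\ge\epsilon^m$ passes to the limit, so $\Psi^{-1}$ is strictly increasing, hence a homeomorphism of $[0,1]$.

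I would then transfer uniform convergence to $\psi_{\theta,n}$ itself. Fix $y\in[0,1]$ and set $x_n:=\psi_{\theta,n}(y)$ and $x:=\Psi(y)$, so that $\psi_{\theta,n}^{-1}(x_n)=\Psi^{-1}(x)=y$. Then $|\psi_{\theta,n}^{-1}(x_n)-\psi_{\theta,n}^{-1}(x)|=|\Psi^{-1}(x)-\psi_{\theta,n}^{-1}(x)|=O((1-\epsilon)^n)$. On the other hand, any sub-interval of $[0,1]$ of length $\ge 2^{1-m}$ with $m\le n$ contains a full rank-$m$ dyadic interval, so applying the lower length bound to $\psi_{\theta,n}^{-1}$ yields $|\psi_{\theta,n}^{-1}(x_n)-\psi_{\theta,n}^{-1}(x)|\ge\epsilon^2|x_n-x|^\beta$ whenever $|x_n-x|\ge 2^{1-n}$, where $\beta:=\log_2(1/\epsilon)$. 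Combining gives $|x_n-x|\to 0$ uniformly in $y$, which is exactly the uniform convergence of $\psi_{\theta,n}$ to $\Psi$.

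Finally, the H\"older bound on $\Psi$ comes from the same geometric input. Given $a<b$ in $[0,1]$, choose the smallest $m\ge 1$ with $2^{1-m}\le b-a$; then $[a,b]$ contains a full rank-$m$ dyadic interval, so $\Psi^{-1}(b)-\Psi^{-1}(a)\ge\epsilon^m\ge\epsilon^2(b-a)^\beta$, and inverting yields $|\Psi(y)-\Psi(x)|\le 4|y-x|^{1/\beta}$, i.e.\ H\"older exponent $\lambda=1/\log_2(1/\epsilon)$, which tends to $1$ as $\epsilon\to\tfrac12^-$. There is no serious obstacle here; the lemma is essentially geometric bookkeeping, and the one point that demands care is ensuring the lower bound $\epsilon^m$ on image-interval lengths survives in the limit, as this is what makes $\Psi^{-1}$ genuinely strictly monotone and drives both the inversion argument and the H\"older estimate.
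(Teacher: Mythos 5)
Your proof is correct and follows essentially the same route as the paper's: the same inductive estimate $|\psi_{\theta,n}^{-1}(I)|\in[\epsilon^m,(1-\epsilon)^m]$ for rank-$m$ dyadic intervals, the same comparison of an arbitrary interval with dyadic ones, and the same inversion to obtain the H\"older bound for $\psi$. The only cosmetic difference is in how uniform convergence of $\psi_{\theta,n}$ is extracted: the paper proves the H\"older estimate for every finite $n$ (by extending the dyadic estimate to ranks $m>n$) and then combines equicontinuity with convergence on the dense set $\{\psi_{\theta,n}^{-1}(d): d \text{ dyadic}\}$, whereas you pass the lower bound directly to $\Psi^{-1}$ and run the inversion estimate against $\|\psi_{\theta,n}^{-1}-\Psi^{-1}\|_\infty\le(1-\epsilon)^n$; both are sound and rest on the same geometric input.
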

\begin{proof}
Examining (\ref{eq:psi by interpolation}) we see that $\psi^{-1}$
is strictly increasing, hence $\psi$ is well-defined. Further, a
simple induction shows that
\[
\epsilon^{n}\le\psi_{\theta,n}^{-1}(k2^{-n})-\psi_{\theta,n}^{-1}((k-1)2^{-n})\le(1-\epsilon)^{n}.
\]
For dyadic intervals of order $m<n$ we may get the same estimate
since $\psi_{\theta,n}(k2^{-m})=\psi_{\theta,m}(k2^{-m})$. For $m>n$
the linearity of $\psi_{\theta,n}^{-1}$ on dyadic intervals of order
$n$ gives
\[
\psi_{\theta,n}^{-1}(k2^{-m})-\psi_{\theta,n}^{-1}((k-1)2^{-m})\le\left(1-\epsilon\right)^{n}2^{n-m}\le\left(1-\epsilon\right)^{m}\qquad\forall m>n.
\]
The lower bound is similar and we can conclude that
\begin{equation}
2^{-m/\delta}\le\psi_{\theta,n}^{-1}(k2^{-m})-\psi_{\theta,n}^{-1}((k-1)2^{-m})\le2^{-\delta m}\qquad\forall m,n\label{eq:only finite for now.}
\end{equation}
for an appropriate $\delta$ depending only on $\epsilon$, which
converges to $1$ when $\epsilon\to\frac{1}{2}$. Since the series
$\psi_{\theta,n}^{-1}(d)$ stabilises for all dyadic rational $d$,
we get that the limit (denote it by $\psi_{\infty}^{-1}$) also satisfies
(\ref{eq:only finite for now.}).

For a general $x<y\in[0,1]$ we may find a dyadic interval contained
in $[x,y]$ of size at least $\frac{1}{4}(y-x)$. In the other direction
we may find two dyadic intervals $I_{1}$, $I_{2}$ such that $[x,y]\subseteq I_{1}\cup I_{2}$
and such that $|I_{1}\cup I_{2}|\le4(y-x)$. Hence
\begin{equation}
(\tfrac{1}{4}(y-x))^{1/\delta}\le\psi_{\theta,n}^{-1}(y)-\psi_{\theta,n}^{-1}(x)\le(4(y-x))^{\delta}\label{eq:Holder psi}
\end{equation}
for all $n$, finite or infinite. Reversing gives a similar estimate
for $\psi$, namely
\begin{equation}
\tfrac{1}{4}|x-y|^{1/\delta}\le|\psi_{\theta,n}(x)-\psi_{\theta,n}(y)|\le4|x-y|^{\delta}.\label{eq:Holder 34}
\end{equation}
Since $\psi_{\theta,n}^{-1}$ stabilises on all dyadic rationals,
we get a dense set of numbers where $\psi_{\theta,n}$ stabilises,
and on these numbers the limit satisfies (\ref{eq:Holder 34}). This
of course shows that $\psi_{\theta,n}(x)$ converges uniformly, and
that the limit also satisfies (\ref{eq:Holder 34}), for all $x,y\in[0,1]$.
\end{proof}
Denote 
\[
\psi_{\theta,\infty}\coloneqq\lim_{n\to\infty}\psi_{\theta,n}.
\]
We note for future use that (\ref{eq:defAB}) extends to $n=\infty$,
namely
\begin{equation}
\psi_{\theta,\infty}(x)=\begin{cases}
\frac{1}{2}\psi_{\theta^{-},\infty}(\frac{x}{A}) & x\le A\\
\frac{1}{2}+\frac{1}{2}\psi_{\theta^{+},\infty}(\frac{x-A}{1-A}) & \text{otherwise,}
\end{cases}\label{eq:psi infinity recurs}
\end{equation}
where $A$ is again $\theta(\frac{1}{2})$. Let us also remark that
during the proof of lemma \ref{lem:Holder} we defined $\psi_{\theta,\infty}^{-1}=\lim_{n\to\infty}\psi_{\theta,n}^{-1}$,
but this notation is not ambiguous since this limit is also the inverse
of $\psi_{\theta,\infty}$, since both $\psi_{n}\to\psi_{\infty}$
and $\psi_{n}^{-1}\to\psi_{\infty}^{-1}$ are uniform (the fact that
$\psi_{n}^{-1}\to\psi_{\infty}^{-1}$ uniformly is implicit in the
proof of lemma \ref{lem:Holder}).

We now move to conditions on $\theta$ which will guarantee that $\psi_{\theta,\infty}$
is absolutely continuous. This clearly requires $\theta$ to be usually
close to $\frac{1}{2}$, so let us introduce a function $q$ from
the dyadic intervals into $[0,\infty)$ and ask that $|\theta(d)-\frac{1}{2}|\le q(d)$
for all dyadic $d$ (it will actually be convenient later to add a
constant and require $|\theta-\frac{1}{2}|\le cq$, but let us ignore
this for a bit). Thus we need to find some condition on $q$ that
will ensure that $\psi_{\theta,\infty}$ is absolutely continuous
whenever $|\theta-\frac{1}{2}|\leq q$. To formulate the condition,
we need two auxiliary functions. The first, $Q_{n}:[0,1]\to[0,\infty)$
is defined by
\[
Q_{n}(x)=Q_{q,n}(x)=q(d)
\]
whenever $x$ is in some dyadic interval $I$, $|I|=2^{-n}$, and
$d$ is the middle of $I$. With $Q_{n}$ defined we denote
\begin{equation}
z(x)=z_{q}(x)=\sum_{n=0}^{\infty}Q_{n}(x).\label{eq:def z}
\end{equation}
We can now state our condition.
\begin{lem}
\label{lem:z to psi bound}There exists a constant $\nu_{0}$ such
that for any $q$ for which 
\begin{equation}
|\{x:|z_{q}(x)|>\lambda\}|\le Ce^{-\lambda}\qquad\forall\lambda>0\label{eq:z cond}
\end{equation}
and for any $\theta$ such that $|\theta-\frac{1}{2}|\leq\nu_{0}q$,
the homeomorphisms $\psi_{\theta,\infty}$ and $\psi_{\theta,\infty}^{-1}$
are absolutely continuous.

Further, for any $p<\infty$ there exists a constant $\nu_{1}(p)$
such that $|\theta-\frac{1}{2}|\le\nu_{1}(p)q$ ensures that $||\psi_{\theta,\infty}'||_{p},||(\psi_{\theta,\infty}^{-1})'||_{p}\le C(p)$.
\end{lem}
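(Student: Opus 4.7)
The plan is to obtain a pointwise exponential bound $\rho_n(x) \le e^{2\nu_0 z_q(x)}$ for the derivative $\rho_n := (\psi_{\theta,n}^{-1})'$ and a parallel bound $\mu_n(y) \le e^{4\nu_0 z_q(\psi_n(y))}$ for $\mu_n := \psi_{\theta,n}'$, then to extract $L^p$-boundedness from the subexponential tail of $z_q$.

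Iterating (\ref{eq:induct psi-1-1}) yields the product formula $\rho_n(x) = \prod_{k=1}^n 2\theta_k(x)$, where $\theta_k(x) \in \{\theta(d_k(x)),\, 1 - \theta(d_k(x))\}$ and $d_k(x)$ is the midpoint of the rank-$(k-1)$ dyadic interval containing $x$. Writing $2\theta_k = 1 \pm 2 s_k(x)$ with $s_k(x) = \theta(d_k(x)) - \tfrac{1}{2}$ and using $1 + u \le e^{|u|}$ gives
$$\rho_n(x) \le \exp\Big(2\sum_{k=1}^n |s_k(x)|\Big) \le \exp(2\nu_0 z_q(x)),$$
since $|s_k(x)| \le \nu_0 q(d_k(x)) = \nu_0 Q_{k-1}(x)$ by hypothesis. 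An entirely parallel iteration of (\ref{eq:defAB}) expresses $\mu_n(y) = \prod_{k=1}^n (2A_k(y))^{-1}$ with each $A_k(y) \in \{\theta(d_k'),\, 1-\theta(d_k')\}$ and $d_k'$ the midpoint of the rank-$(k-1)$ dyadic interval containing $\psi_n(y)$. Since $|2A_k - 1| \le \tfrac{1}{2}$, the inequality $(1+u)^{-1} \le e^{2|u|}$ applies and yields $\mu_n(y) \le \exp(4\nu_0 z_q(\psi_n(y)))$.

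Next, the tail hypothesis $|\{z_q > \lambda\}| \le Ce^{-\lambda}$ immediately gives $\int_0^1 e^{\alpha z_q} < \infty$ for every $\alpha < 1$. Hence $\int \rho_n^p \le \int e^{2p\nu_0 z_q}$ is uniformly bounded whenever $2p\nu_0 < 1$. For $\mu_n$, we change variables $x = \psi_n(y)$ (valid because $\psi_n$ is AC piecewise linear, with $\mu_n \cdot (\rho_n \circ \psi_n) = 1$):
$$\int_0^1 \mu_n(y)^p\, dy = \int_0^1 \mu_n(\psi_n^{-1}(x))^{p-1}\, dx \le \int_0^1 e^{4(p-1)\nu_0 z_q(x)}\, dx,$$
uniformly bounded whenever $4(p-1)\nu_0 < 1$. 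For part~(i), any $\nu_0 \in (0, 1/4)$ together with some $p \in (1, 1/(2\nu_0))$ suffices; for part~(ii), given $p$, we take $\nu_1(p) = c/p$ with $c$ small enough to satisfy both constraints.

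The last step is the passage to $n = \infty$. The process $(\rho_n)$ is a non-negative dyadic martingale with $\mathbb{E}\rho_n = 1$, so $L^p$-boundedness for some $p > 1$ gives convergence $\rho_n \to \rho_\infty$ in $L^p$ (and almost surely). Combined with the uniform convergence $\psi_n^{-1} \to \psi_{\theta,\infty}^{-1}$ from Lemma~\ref{lem:Holder}, taking limits in $\psi_n^{-1}(b) - \psi_n^{-1}(a) = \int_a^b \rho_n$ identifies $\rho_\infty$ as the a.e.\ derivative of the (therefore AC) homeomorphism $\psi_{\theta,\infty}^{-1}$. The sequence $(\mu_n)$ is not a martingale in any natural filtration — this is where extra care is needed — but uniform $L^p$-boundedness still gives weak precompactness in the reflexive space $L^p$, and any weak limit $\mu$ must satisfy $\int_a^b \mu = \psi_{\theta,\infty}(b) - \psi_{\theta,\infty}(a)$ by testing against $\mathbbm{1}_{[a,b]}$. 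This uniquely pins it down as $\psi_{\theta,\infty}'$, and weak lower semicontinuity of the norm gives $\|\psi_{\theta,\infty}'\|_p \le \liminf_n \|\mu_n\|_p \le C(p)$.
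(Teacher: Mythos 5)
Your argument is correct and matches the paper's proof in all essentials: the pointwise exponential bound on $(\psi_{\theta,n}^{-1})'$ obtained by unrolling the recursion, integration of this bound against the subexponential tail of $z_q$, and a martingale passage to the limit. The only (cosmetic) divergence is in the treatment of $\psi$ itself: you bound $\psi_{\theta,n}'$ pointwise at each finite level and extract a weak $L^p$ limit, whereas the paper applies its change-of-variables lemma~\ref{lem:integrating is confusing} to the already-constructed limit $\psi_{\theta,\infty}^{-1}$ --- and, incidentally, $(\mu_n)$ actually \emph{is} a positive martingale, with respect to the filtration generated by the cells $\psi_{\theta,n}^{-1}(I)$ over rank-$n$ dyadic $I$, though the weak-compactness route you chose works equally well.
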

Notice that (\ref{eq:z cond}) does not prohibit $z$ from being $\infty$
on a set of zero measure.
\begin{proof}
Assume $|\theta-\frac{1}{2}|\le\nu q$ for some $\nu$. Examine a
finite $n$. Since $\psi_{\theta,n}^{-1}$ is piecewise linear we
may consider the derivative. The relation between $z$ and $\psi$
is given by the following inequality. We claim that for every $n$
and every $x\in[0,1]$,
\begin{equation}
\exp\bigg(-C\nu\sum_{i=0}^{n}Q_{i}(x)\bigg)\le\left(\psi_{\theta,n}^{-1}\right)'(x)\le\exp\bigg(C\nu\sum_{i=0}^{n}Q_{i}(x)\bigg).\label{eq:psi-1'<exp sum q}
\end{equation}
where at dyadic $x$ the left derivative is bounded by the left limits
of the terms, and ditto on the right (and these derivatives, of course,
do not need to be equal). 

We prove (\ref{eq:psi-1'<exp sum q}) by induction on $n$. The base
case $n=0$ is trivial as $\psi_{0}^{-1}(x)=x$ so $\left(\psi_{0}^{-1}\right)'=1$
and satisfies the requirement. Assume therefore that (\ref{eq:psi-1'<exp sum q})
is proved for $n-1$.

Recall (\ref{eq:induct psi-1-1}) and the definitions of $\theta^{\pm}$
and $A$ just before it. We wish to differentiate (\ref{eq:induct psi-1-1})
and for this we note that $A$ is just a number, and further satisfies
\begin{equation}
\exp(-C\nu Q_{q,0})\le2A\le\exp(C\nu Q_{q,0})\label{eq:2A bounds}
\end{equation}
where we used $|\theta\left(\frac{1}{2}\right)-\frac{1}{2}|\le\nu q(\frac{1}{2})=\nu Q_{q,0}(x)$
for all $x$.

Next define $q^{-}$ and $q^{+}$ as for $\theta$, namely $q^{-}(x)=q(\frac{1}{2}x)$
and $q^{+}(x)=q(\frac{1}{2}+\frac{1}{2}x)$. A simple check shows
that, for $x\in[0,\frac{1}{2})$,
\begin{equation}
Q_{q,n}(x)=Q_{q^{-},n-1}(2x).\label{eq:LQ induct}
\end{equation}
Hence, for $x\in[0,\frac{1}{2}]$ non-dyadic,
\begin{align*}
\left(\psi_{\theta,n}^{-1}\right)'(x) & \stackrel{\textrm{\clap{(\ref{eq:induct psi-1-1})}}}{=}2A\left(\psi_{\theta^{-},n-1}^{-1}\right)'\left(2x\right)\\
 & \stackrel{\textrm{\clap{(\ref{eq:2A bounds})}}}{\le}\exp(C\nu Q_{q,0}(x))\left(\psi_{\theta^{-},n-1}^{-1}\right)'\left(2x\right)\\
 & \stackrel{\mathclap{(*)}}{\le}\exp\Big(C\nu Q_{q,0}(x)+C\nu\sum_{i=0}^{n-1}Q_{q^{-},i}\left(2x\right)\Big)\\
 & \stackrel{\textrm{\clap{(\ref{eq:LQ induct})}}}{=}\exp\Big(C\nu Q_{q,0}(x)+C\nu\sum_{i=0}^{n-1}Q_{q,i+1}(x)\Big)\\
 & =\exp\Big(C\nu\sum_{i=0}^{n}Q_{q,i}(x)\Big).
\end{align*}
where $(*)$ is our induction hypothesis. For $x$ dyadic the calculation
above holds identically for both left and right derivatives (for $x=\frac{1}{2}$
only the left derivative), when $Q$ is replaced by appropriate left
or right limits. The lower bound for $(\psi^{-1})'$ is identical.
The case of $x\in[\frac{1}{2},1]$ is identical, with $q^{-}$ and
$\theta^{-}$ replaced by $q^{+}$ and $\theta^{+}$ respectively;
and with (\ref{eq:LQ induct}) replaced by $Q_{q,n}=Q_{q^{+},n-1}(2x-1)$.
This finishes the proof of (\ref{eq:psi-1'<exp sum q}).

Denote for brevity $\psi_{n}\coloneqq\psi_{\theta,n}$ and write (\ref{eq:psi-1'<exp sum q})
as
\[
\left(\psi_{n}^{-1}\right)'(x)\le\exp(C\nu z(x)).
\]
Our assumption (\ref{eq:z cond}) on $z$ shows that, if $\nu$ is
chosen sufficiently small, then $\exp(C\nu z(x))$ is integrable.
Hence $\psi_{n}^{-1}$ are uniformly absolutely continuous, and hence
their limit, $\psi_{\infty}^{-1}$, is absolutely continuous. We next
claim that $(\psi_{n}^{-1})'(x)$ converges for almost all $x$. There
are two ways to see this: the first to repeat the argument that showed
(\ref{eq:psi-1'<exp sum q}) to show that $(\psi_{n}^{-1})'/(\psi_{m}^{-1})'$
is bounded by a partial sum of $Q$-s. The second, to note that $(\psi_{n}^{-1})'$
is a positive martingale with respect to the dyadic filtration $\mathcal{F}_{n}$
and apply the martingale convergence theorem \cite[\S 4.2 (2.11)]{D05}.
By the dominated convergence theorem we get 
\begin{equation}
\lim_{n\to\infty}(\psi_{n}^{-1})'(x)=(\psi_{\infty}^{-1})'(x)\qquad\text{for almost every }x\in[0,1],\label{eq:lim psi n inverse}
\end{equation}
and in particular $(\psi_{\infty}^{-1})'\le\exp(C\nu z(x))$ for almost
every $x\in[0,1]$. This shows the `further' clause of the lemma for
$\psi^{-1}$, as for $\nu$ sufficiently small (\ref{eq:z cond})
gives $||\exp(C\nu z(x))||_{p}<C(p)$. The results for $\psi$ are
then concluded from lemma \ref{lem:integrating is confusing} below.
\end{proof}
\begin{lem}
\label{lem:integrating is confusing}If $\phi$ is an absolutely continuous
homeomorphism of $[0,1]$ with $\frac{1}{\phi'}\in L^{p}$ for some
$p>0$, then $\phi^{-1}$ is also absolutely continuous, with $\int|(\phi^{-1})'|^{p+1}=\int|\phi'|^{-p}$.
\end{lem}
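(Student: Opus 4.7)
The plan is to split the lemma into two assertions and treat them in turn. First I would show that $\phi^{-1}$ is absolutely continuous via the Banach--Zarecki criterion: a continuous function of bounded variation is absolutely continuous iff it satisfies Luzin's condition (N), i.e.\ maps null sets to null sets. Since $\phi^{-1}$ is a monotone homeomorphism of $[0,1]$ it is automatically continuous and of bounded variation, so it suffices to verify (N). Equivalently, writing $F=\phi^{-1}(E)$, I need to show $|F|=0$ whenever $|\phi(F)|=0$. For this I would invoke the area formula $|\phi(F)|=\int_F \phi'$, which is a standard consequence of the absolute continuity of the monotone $\phi$. This gives $\int_F \phi'=0$. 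The hypothesis $1/\phi'\in L^p$ forces $\phi'>0$ almost everywhere (otherwise $1/\phi'$ would equal $+\infty$ on a set of positive measure, incompatible with integrability), whence $|F|=0$, as required.

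For the integral identity I would use the classical derivative-of-inverse formula $(\phi^{-1})'(y)=1/\phi'(\phi^{-1}(y))$, valid for almost every $y\in[0,1]$ whenever $\phi$ is an AC homeomorphism with $\phi'>0$ a.e.; this follows from differentiating $\phi(\phi^{-1}(y))=y$ at points where both $\phi$ and $\phi^{-1}$ are differentiable and $\phi'\neq 0$, a set of full measure under our hypotheses (both AC-ness of $\phi^{-1}$, just established, and positivity a.e.\ of $\phi'$ are needed here). Then the change of variable $y=\phi(x)$, $dy=\phi'(x)\,dx$, legitimate because $\phi$ is AC and monotone, gives
\[
\int_0^1 \bigl|(\phi^{-1})'(y)\bigr|^{p+1}\,dy=\int_0^1 \frac{dy}{\phi'(\phi^{-1}(y))^{p+1}}=\int_0^1 \frac{\phi'(x)\,dx}{\phi'(x)^{p+1}}=\int_0^1 \phi'(x)^{-p}\,dx,
\]
as claimed.

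There is no serious obstacle here; the entire content is the careful bookkeeping of almost-everywhere exceptional sets. The hypothesis $1/\phi'\in L^p$ enters twice, and in both places in the same way: it forces $\phi'>0$ a.e., which is what makes both condition (N) for $\phi^{-1}$ and the inverse-derivative formula rigorous. Once this pointwise strict positivity is in hand, the remaining manipulations are entirely classical and the two sides of the identity are automatically finite together.
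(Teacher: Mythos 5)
Your argument is correct, though the proof of absolute continuity of $\phi^{-1}$ takes a genuinely different route from the paper's. You invoke the Banach--Zarecki characterisation (continuous $+$ bounded variation $+$ Luzin's condition (N)) and verify (N) via the area formula $|\phi(F)|=\int_F\phi'$, using integrability of $1/\phi'$ to force $\phi'>0$ a.e. The paper instead argues directly and quantitatively: for a countable union of intervals $A$ with $B=\phi^{-1}(A)$ it writes $|A|=\int_B\phi'\ge\varepsilon\bigl(|B|-|\{\phi'<\varepsilon\}|\bigr)\ge\varepsilon\bigl(|B|-\varepsilon^p\|1/\phi'\|_p^p\bigr)$ by Markov's inequality, and optimising over $\varepsilon$ gives $|\phi^{-1}(A)|\le 2\bigl(|A|\cdot\|1/\phi'\|_p\bigr)^{p/(p+1)}$. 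The paper's version is more elementary (no Banach--Zarecki, no area formula for sets) and produces an explicit H\"older-type modulus of absolute continuity with exponent $p/(p+1)$; yours is conceptually cleaner but leans on heavier classical theorems and yields only the qualitative conclusion, which is all the lemma requires. For the integral identity the two arguments essentially coincide: a change of variables $y=\phi(x)$ combined with $(\phi^{-1})'(y)=1/\phi'(\phi^{-1}(y))$ a.e.; the paper writes this compactly as $\int((\phi^{-1})')^{p+1}=\int((\phi^{-1})')^{p}\circ\phi=\int(\phi')^{-p}$, and you spell out the same calculation.
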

\begin{proof}
Assume without loss of generality that $\phi$ is increasing. We first
show that $\phi^{-1}$ is absolutely continuous. Let therefore $A\subset[0,1]$
be a countable union of intervals. Denote $B=\phi^{-1}A$, so it is
also a countable union of intervals. Then for every $\varepsilon>0$
\[
|A|=\int_{B}\phi'\ge\varepsilon(|B|-|\{x:\phi'(x)<\varepsilon\}|)\ge\varepsilon(|B|-\varepsilon^{p}||\tfrac{1}{\phi'}||_{p}^{p})
\]
where the last inequality follows from Markov's inequality. Choosing
$\varepsilon=(\frac{1}{2}|B|)^{1/p}/||\frac{1}{\phi'}||_{p}$ gives
\[
|\phi^{-1}A|=|B|\le2\left(|A|\cdot||\tfrac{1}{\phi'}||_{p}\right)^{p/(p+1)}
\]
so $\phi^{-1}$ is absolutely continuous. To show the derivative equality
write
\[
\int_{0}^{1}(\phi^{-1})'^{p+1}=\int_{0}^{1}(\phi^{-1})'^{p}\circ\phi=\int_{0}^{1}\frac{1}{\phi'^{p}}
\]
where the first equality is a change of variables.
\end{proof}

\subsection*{Tailoring the homeomorphism family to the function.}

Recall from the discussion on page \pageref{pg:discuss qf} that we
need to tailor the function $q$, which describes a family of homeomorphisms
in this section, and would be used to construct a measure on it in
the next section, to our function $f$ from the statement of theorem
\ref{thm:main}. Here we do that, but first we need some notation.
\begin{defn*}
Recall the Haar functions $\mathbbm{1}_{[0,1]}$, $\mathbbm{1}_{[0,1/2]}-\mathbbm{1}_{[1/2,1]}$
etc. It will be convenient to index them using their support, so for
a dyadic interval $J$, let $h_{J}$ be the function which is $|J|^{-1/2}$
on the left half and $-|J|^{-1/2}$ on the right half (the function
$\mathbbm{1}_{[0,1]}$ will not be associated to any interval, this
will not be a problem).

For a function $f\in L^{2}$ we define $q=q_{f}$, a function on the
dyadic rationals, as follows. Let $I$ be some dyadic interval and
let $d$ be its middle. Then we define
\begin{equation}
q(d)\coloneqq\frac{1}{|I|^{3/2}}\sum_{J\subseteq I}\langle f,h_{J}\rangle^{2}|J|^{1/2}.\label{eq:def q Olv}
\end{equation}
Recall the definition of $z_{q}$ in (\ref{eq:def z}). Applying it
to the $q$ above we get 
\[
z_{q}(x)=\sum_{x\in I\supseteq J}\frac{|J|^{1/2}}{|I|^{3/2}}\langle f,h_{J}\rangle^{2}
\]
(here and below this notation means that the sum is over both $I$
and $J$. Note that $x$ does not need to belong to $J$, only to
$I$).
\end{defn*}
\begin{lem}
\label{lem:z Olv}Let $f$ be a bounded function with $||f||_{\infty}\le1$.
Then $z=z_{q_{f}}$ satisfies $|\{x:z(x)>\lambda\}|\le2e^{-c\lambda}$
for all $\lambda>0$, where $c$ is some universal constant.
\end{lem}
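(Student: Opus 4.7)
The plan is to recognise $z=z_{q_{f}}$ as a (non-negative) dyadic BMO function on $[0,1]$ and then invoke the dyadic John--Nirenberg inequality. Throughout write $a_{J}\coloneqq\langle f,h_{J}\rangle^{2}$, so
\[
z(x)=\sum_{I\ni x}\frac{1}{|I|^{3/2}}\sum_{J\subseteq I}|J|^{1/2}a_{J}.
\]

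First I would show that $\int_{0}^{1}z\le C$. Integrating and swapping the order of summation,
\[
\int_{0}^{1}z=\sum_{I}\frac{1}{|I|^{1/2}}\sum_{J\subseteq I}|J|^{1/2}a_{J}=\sum_{J}|J|^{1/2}a_{J}\sum_{I\supseteq J}|I|^{-1/2},
\]
and the inner geometric series is bounded by $C|J|^{-1/2}$. Hence $\int z\le C\sum_{J}a_{J}\le C\Vert f\Vert_{2}^{2}\le C$ by orthonormality of the Haar system (together with the trivial $\Vert f\Vert_{2}\le\Vert f\Vert_{\infty}\le1$).

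Next, for every dyadic $I_{0}\subseteq[0,1]$ I would split $z|_{I_{0}}$ into the contributions from $I\subseteq I_{0}$ and from $I\supsetneq I_{0}$. The latter are constant on $I_{0}$, call their sum $C_{I_{0}}$. The remainder
\[
z^{I_{0}}(x)\coloneqq\sum_{\substack{I\subseteq I_{0}\\ I\ni x}}\frac{1}{|I|^{3/2}}\sum_{J\subseteq I}|J|^{1/2}a_{J}
\]
is non-negative, so $\int_{I_{0}}|z-z_{I_{0}}|=\int_{I_{0}}|z^{I_{0}}-(z^{I_{0}})_{I_{0}}|\le 2\int_{I_{0}}z^{I_{0}}$. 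Repeating the swap above on $I_{0}$,
\[
\int_{I_{0}}z^{I_{0}}=\sum_{J\subseteq I_{0}}|J|^{1/2}a_{J}\sum_{J\subseteq I\subseteq I_{0}}|I|^{-1/2}\le C\sum_{J\subseteq I_{0}}a_{J}=C\int_{I_{0}}(f-f_{I_{0}})^{2}\le C|I_{0}|,
\]
the last inequality using $\Vert f\Vert_{\infty}\le1$. This gives $\Vert z\Vert_{\mathrm{BMO}_{d}}\le C$.

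Finally I would apply the dyadic John--Nirenberg inequality to $z$ on $[0,1]$: there are absolute constants $C',c>0$ with
\[
|\{x\in[0,1]:|z(x)-z_{[0,1]}|>\lambda\}|\le C'e^{-c\lambda}\qquad\forall\lambda>0.
\]
Since $0\le z_{[0,1]}\le C$ by step one, for $\lambda>2C$ we have $\{z>\lambda\}\subseteq\{|z-z_{[0,1]}|>\lambda/2\}$, yielding $|\{z>\lambda\}|\le C'e^{-c\lambda/2}$; for $\lambda\le 2C$ the trivial bound $|\{z>\lambda\}|\le 1$ gives the same shape after adjusting constants, proving the lemma.

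The main technical point is identifying the right decomposition on $I_{0}$ and verifying the geometric series collapse that turns the triply indexed sum into $\sum_{J\subseteq I_{0}}a_{J}$; everything else is bookkeeping and an appeal to dyadic John--Nirenberg.
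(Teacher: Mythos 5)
Your proof is correct and follows essentially the same route as the paper: establish $\int_0^1 z\le C$ by Fubini and orthonormality, split off the constant contribution of the ancestors of $I_0$ to show $z$ is dyadic BMO, and conclude by dyadic John--Nirenberg. The only cosmetic difference is that you bound $\int_{I_0} z^{I_0}$ directly via $\sum_{J\subseteq I_0}a_J=\int_{I_0}(f-f_{I_0})^2\le|I_0|$, whereas the paper reaches the same estimate by rescaling with $g_U=f\circ L_U$ and reusing the global $L^1$ bound.
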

\begin{proof}
Let us first calculate the $L^{1}$ norm of $z=z_{q_{f}}$. By Fubini
\begin{align}
\int_{0}^{1}z(x)\,dx & =\sum_{J\subseteq I}\int_{0}^{1}\frac{|J|^{1/2}}{|I|^{3/2}}\langle f,h_{J}\rangle^{2}\mathbbm{1}\{x\in I\}=\sum_{J\subseteq I}\frac{|J|^{1/2}}{|I|^{1/2}}\langle f,h_{J}\rangle^{2}\nonumber \\
 & \le\frac{\sqrt{2}}{\sqrt{2}-1}\sum_{J}\langle f,h_{J}\rangle^{2}\le\frac{\sqrt{2}}{\sqrt{2}-1},\label{eq:z in L1}
\end{align}
where the last inequality follows since the $h_{J}$ are orthonormal
and $||f||_{2}\le||f||_{\infty}\le1$.

The same calculation shows that $z_{f}$ is a dyadic BMO function.
Indeed, let $U$ be some dyadic interval, let $L_{U}$ be the affine
increasing map taking $[0,1]$ onto $U$. Define 
\[
g_{U}(x)=f(L_{U}(x)).
\]
Then write, for $x\in U$,
\[
z(x)=\sum_{U\subseteq I\supseteq J}\frac{|J|^{1/2}}{|I|^{3/2}}\langle f,h_{J}\rangle^{2}+\sum_{\substack{x\in I\subset U\\
J\subseteq I
}
}\frac{|J|^{1/2}}{|I|^{3/2}}\langle f,h_{J}\rangle^{2}
\]
and note that the first term does not depend on $x$. Hence we may
call it $c_{U}$ and get
\begin{align*}
\int_{U}|z-c_{U}| & =\sum_{J\subseteq I\subset U}\frac{|J|^{1/2}}{|I|^{1/2}}\langle f,h_{J}\rangle^{2}\\
 & \stackrel{(*)}{=}|U|\sum_{J\subseteq I\subseteq[0,1]}\frac{|J|^{1/2}}{|I|^{1/2}}\langle g_{U},h_{J}\rangle^{2}\stackrel{(\dagger)}{\le}|U|\frac{\sqrt{2}}{\sqrt{2}-1},
\end{align*}
where the equality marked by $(*)$ comes from mapping $J$ to $L_{U}^{-1}(J)$,
$I$ to $L_{U}^{-1}(I)$, and noting that the linear change of variable
in the integration together with the fact that $||h_{L_{U}^{-1}(J)}||_{\infty}=\sqrt{|U|}||h_{J}||_{\infty}$
give together a factor of $U$; and where the inequality marked by
$(\dagger)$ comes from applying (\ref{eq:z in L1}) to $g_{U}$ (which
is, of course, also bounded by 1).

Thus $z$ is a dyadic BMO function. The lemma is then concluded by
the John-Nirenberg inequality for dyadic BMO functions \cite[theorem 1d]{SV11}.
\end{proof}
\begin{lem}
\label{lem:abs cont}There exists a universal constant $\eta_{0}$
such that for all $f$ with $||f||_{\infty}\le1$ and all $\theta$
with $|\theta-\frac{1}{2}|\le\eta_{0}q_{f}$, both $\psi_{\theta,\infty}$
and $\psi_{\theta,\infty}^{-1}$ are absolutely continuous.

Further, for any $p<\infty$ there is an $\eta_{1}(p)$ such that
if $|\theta-\frac{1}{2}|\le\eta_{1}(p)q_{f}$ then 
\[
||\psi_{\theta,\infty}'||_{p},||(\psi_{\theta,\infty}^{-1})'||_{p}<C(p).
\]
\end{lem}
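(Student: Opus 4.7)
The plan is to simply combine the two preceding lemmas, lemma \ref{lem:z to psi bound} and lemma \ref{lem:z Olv}, after taking care of one small discrepancy in the constants. Lemma \ref{lem:z to psi bound} gives an absolute continuity conclusion provided $z_q$ satisfies the tail bound $|\{x:z_q(x)>\lambda\}|\le Ce^{-\lambda}$, while lemma \ref{lem:z Olv} provides such a bound for $z_{q_f}$ but with $e^{-c\lambda}$ in place of $e^{-\lambda}$. Since neither $c$ nor the multiplicative constant is specified to be $1$, a direct substitution is off by a scaling.

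To reconcile this, I would use the elementary observation that $z_q$ depends linearly on $q$: if we replace $q$ by $\alpha q$ for any scalar $\alpha>0$, then $Q_{\alpha q,n}=\alpha Q_{q,n}$ and therefore $z_{\alpha q}=\alpha z_q$. Thus, taking $\alpha = c$ (with $c$ the constant from lemma \ref{lem:z Olv}), the function $q' = c\,q_f$ satisfies
\[
|\{x: z_{q'}(x)>\lambda\}| = |\{x: z_{q_f}(x)>\lambda/c\}| \le 2e^{-\lambda},
\]
which is precisely the hypothesis (\ref{eq:z cond}) of lemma \ref{lem:z to psi bound}.

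Now invoke lemma \ref{lem:z to psi bound} with this $q'$. If $\theta$ satisfies $|\theta-\tfrac{1}{2}|\le \nu_0 q' = \nu_0 c\, q_f$, then both $\psi_{\theta,\infty}$ and $\psi_{\theta,\infty}^{-1}$ are absolutely continuous. Setting $\eta_0 \coloneqq \nu_0 c$ gives the first statement of the lemma. For the second statement, apply the `further' clause of lemma \ref{lem:z to psi bound} to the same rescaled $q'$, which shows that $|\theta-\tfrac12|\le \nu_1(p)\, c\, q_f$ yields $\|\psi_{\theta,\infty}'\|_p,\|(\psi_{\theta,\infty}^{-1})'\|_p\le C(p)$; setting $\eta_1(p)\coloneqq \nu_1(p)\, c$ finishes the proof.

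There is no real obstacle here — all the work has been done in the two preceding lemmas, and the only task is the trivial rescaling to match the exponential constant in the John–Nirenberg-type tail bound to the normalisation required by lemma \ref{lem:z to psi bound}. The proof should be no more than a few lines.
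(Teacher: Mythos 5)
Your proof is correct and takes essentially the same approach as the paper, which simply states that the lemma ``follows immediately from lemmas \ref{lem:z to psi bound} and \ref{lem:z Olv}.'' You have merely made explicit the harmless rescaling $q' = c\,q_f$ (using the linearity $z_{\alpha q} = \alpha z_q$) needed to align the exponential rate in lemma \ref{lem:z Olv} with the normalisation required by (\ref{eq:z cond}), a detail the paper glosses over.
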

\begin{proof}
This follows immediately from lemmas \ref{lem:z to psi bound} and
\ref{lem:z Olv}.
\end{proof}

Before moving forward let us rearrange the formula for $q=q_{f}(\frac{1}{2})$
in a way that will be useful in \S\ref{sec:Random-homeomorphisms}.
We first write
\begin{equation}
q=\sum_{J\subseteq[0,1]}\langle f,h_{J}\rangle|J|^{1/2}\ge\sum_{k=0}^{\infty}2^{-k/2}\sum_{\substack{J\subseteq[0,1]\\
|J|\le2^{-k}
}
}\langle f,h_{J}\rangle^{2}.\label{eq:decompose q Olv}
\end{equation}
(we remark that also $q\le\sqrt{2}/(\sqrt{2}-1)\times$the same sum).
We next recall that the partial sums of the Haar expansion have a
simple form. Let $X_{k}$ be the sum of the first $2^{k}$ terms in
the Haar expansion of $f$. Then for any dyadic $I$ with $|I|=2^{-k}$
and any $x\in I$
\begin{equation}
X_{k}(x)=2^{k}\int_{I}f(y)\,dy.\label{eq:Haar martingale}
\end{equation}
The sum $\sum_{J}\langle f,h_{J}\rangle h_{J}$ is not the partial
Haar expansion of $f$ but rather that of $g\coloneqq f-\int f$,
since the very first Haar function does not correspond to any interval
$J$. Hence
\[
\sum_{|J|\le2^{-k}}\langle f,h_{J}\rangle h_{J}(x)=2^{k}\int_{I}g(y)\,dy.
\]
Applying Parseval we get from (\ref{eq:decompose q Olv})
\begin{equation}
q\ge\sum_{k=0}^{\infty}2^{-k/2}\sum_{i=1}^{2^{k}}2^{k}\bigg(\int_{(i-1)2^{-k}}^{i2^{-k}}g\bigg)^{2}.\label{eq:q Olv to 1/11 lem}
\end{equation}
This is the form of $q$ we will use below.

\begin{rmrks*}The decomposition above for $q$ gives a decomposition
for the corresponding $z$ (recall its definition in (\ref{eq:def z}))
which has a probabilistic intuition behind it. Indeed, write 
\[
z\approx\sum_{k=0}^{\infty}2^{-k/2}z_{k}
\]
where $z_{k}$ is the function given by applying the procedure to
get $z$ from $q$ (which is linear) for one term. Then each $z_{k}$
is the \emph{increasing process} of a martingale, for example $z_{1}$
is the increasing process of the standard Haar martingale $X_{n}$
defined above in (\ref{eq:Haar martingale}). Recall that for any
martingale $X_{n}$, its increasing process is defined as $A_{n}=\sum_{i=1}^{n-1}\mathbb{E}((X_{i+1}-X_{i})^{2}\,|\,X_{1},\dotsc,X_{i})$,
see e.g.\ \cite[\S 4.4]{D05}.

Let us sketch a probabilistic proof of lemma \ref{lem:z Olv}. We
claim that if $X_{n}$ is a bounded martingale, then its increasing
process $A_{n}$ satisfies $\mathbb{P}(A_{\infty}>\lambda)\le2e^{-c\lambda}$.
To see this we apply the Skorokhod embedding theorem to embed the
martingale into Brownian motion, and then $A_{\infty}$ becomes the
time. The result then follows from the fact that the probability of
Brownian motion to stay for time $t$ inside the interval $[-1,1]$
decays exponentially in $t$. Thus each $z_{k}$ has an exponentially
decaying tail, and therefore so does their weighted sum $z$. We skip
any further details.

Our last remark would be useful mostly to readers who have already
examined the proof of lemma \ref{lem:Holder replacement} below. Indeed,
the specific form of $q$ we use is mostly dictated by its use in
that lemma. Its use there leaves two places for flexibility. Lemma
\ref{lem:Holder replacement} could have worked with the sequence
$2^{-k/2}$ replaced with any sequence decaying faster that $1/k^{2}$;
and the term $\sum2^{k}\big(\int g\big)^{2}$, which is just the $L^{2}$
norm of a partial Haar expansion of $g$, could have been replaced
with others norms, e.g.\ with the $L^{1}$ norm. But lemma \ref{lem:z Olv}
does not hold for the $L^{1}$ norm, only for $L^{p}$ norms with
$p\ge2$. Thus our choice of $L^{2}$ norm comes from the need to
have both lemma \ref{lem:z Olv} and \ref{lem:Holder replacement}
hold, each constraining $q_{f}$ from one side.\end{rmrks*}

\section{Random homeomorphisms\label{sec:Random-homeomorphisms}}
\begin{defn*}
\label{def:adimissible eta}We say that a number $\eta>0$ is admissible
if $\eta<\eta_{1}(2)$ where $\eta_{1}$ is from lemma \ref{lem:abs cont}
(in other words if for all $f$ with $||f||_{\infty}\le1$ and all
$|\theta-\frac{1}{2}|\le\eta q_{f}$ we have $||(\psi_{q,\tau,\infty}^{-1})'||_{2}\le C$),
and in addition if the condition $|\theta-\frac{1}{2}|\le\eta q_{f}$
implies that (\ref{eq:only finite for now.}) and (\ref{eq:Holder psi})
 hold with $\delta=\frac{4}{5}$. In other words
\begin{gather}
\left(\tfrac{3}{8}\right)^{m}\le\psi_{\theta,\infty}^{-1}(k2^{-m})-\psi_{\theta,\infty}^{-1}((k-1)2^{-m})\le\left(\tfrac{5}{8}\right)^{m}\label{eq:38 58-1}\\
(\tfrac{1}{4}(y-x))^{5/4}\le\psi_{\theta,\infty}^{-1}(y)-\psi_{\theta,\infty}^{-1}(x)\le(4(y-x))^{4/5}\label{eq:psi-1 45 54}
\end{gather}
for all values of $m$.
\end{defn*}
The rest of the proof holds for any admissible $\eta$, so let us
fix one such $\eta$ (possibly depending on $p$, if we are proving
the `further' clause of theorem \ref{thm:main}) and remove it from
the notation. Further, we allow arbitrary constants like $c$ and
$C$ to depend on it. For a function $\tau$ from the dyadic rationals
into $[-1,1]$ we denote
\[
\psi_{f,\tau,n}\coloneqq\psi_{\frac{1}{2}+\eta q_{f}\tau,n}\qquad\psi_{f,\tau,\infty}\coloneqq\psi_{\frac{1}{2}+\eta q_{f}\tau,\infty}.
\]
For convenience, let us note at this point the locality formulas for
$\psi_{f,\tau,n}$. Define $f^{\pm}$ and $\tau^{\pm}$ as in (\ref{eq:theta pm})
and recall that $L_{I}$ is the affine increasing map taking $[0,1]$
onto $I$. Then
\begin{align}
\psi_{f,\tau,\infty}(x) & =\begin{cases}
\frac{1}{2}\psi_{f^{-},\tau^{-},\infty}\left(\frac{x}{A}\right) & x\le A\\
\frac{1}{2}+\frac{1}{2}\psi_{f^{+},\tau^{+},\infty}\left(\frac{x-A}{1-A}\right) & x>A
\end{cases}\label{eq:psi random pm}\\
\left.\psi_{f,\tau,\infty}\right|_{\psi_{f,\tau,\infty}^{-1}(I)} & =L_{I}\circ\psi_{f\circ L_{I},\tau\circ L_{I},\infty}\circ L_{\psi_{f,\tau,\infty}^{-1}(I)}^{-1},\label{eq:psi random LI}
\end{align}
where, as usual, $A=\theta(\frac{1}{2})=\frac{1}{2}+\eta q_{f}(\frac{1}{2})\tau(\frac{1}{2})$.
The formula is a direct consequence of (\ref{eq:psi infinity recurs}),
(\ref{eq:psi local}) and the facts that $(q_{f})^{-}=q_{f-}$ and
$q_{f}\circ L_{I}=q_{f\circ L_{I}}$, both of which are easy to verify.

Most importantly, from now on we take $\tau$ random, i.i.d., uniformly
distributed in $[-1,1]$. All $\mathbb{E}$ and $\mathbb{P}$ signs
in this section will refer to this measure on $\tau$.
\begin{lem}
\label{lem:exp psi inverse prime 1}For every $f$ and $n$ (including
$n=\infty$),
\[
\mathbb{E}\left(\psi_{f,\tau,n}^{-1}\right)'(y)=1
\]
for every $y\in[0,1]$ if $n$ is finite (for dyadic $y$ we mean
one-sided derivatives) and for almost every $y$ if $n=\infty$.
\end{lem}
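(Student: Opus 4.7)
The plan is to prove the finite-$n$ case by induction on $n$ using a simple martingale argument, and then pass to the limit $n=\infty$ by dominated convergence, exploiting the uniform $L^{2}$ bound provided by the admissibility of $\eta$ together with the a.e.\ pointwise convergence already established in the proof of lemma~\ref{lem:z to psi bound}.

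For the inductive step, fix a finite $n\ge 1$ and condition on the $\sigma$-algebra $\mathcal{F}_{n-1}$ generated by $\{\tau(d):\rk(d)<n\}$. Since $\psi_{f,\tau,n-1}^{-1}$ depends only on these $\tau(d)$, the quantities $\nu_{d}\psi_{f,\tau,n-1}^{-1}$ are $\mathcal{F}_{n-1}$-measurable for every $d$ of rank $n$. Differentiating (\ref{eq:psi by triangles}) in $x$ (piecewise, which is legal since each summand is piecewise linear) gives
\[
(\psi_{f,\tau,n}^{-1})'(y)=(\psi_{f,\tau,n-1}^{-1})'(y)+\sum_{\rk(d)=n}T_{d}'(y)\bigl(\theta(d)-\tfrac{1}{2}\bigr)\nu_{d}\psi_{f,\tau,n-1}^{-1},
\]
and since $\theta(d)-\frac{1}{2}=\eta q_{f}(d)\tau(d)$ with $\tau(d)$ independent of $\mathcal{F}_{n-1}$ and $\mathbb{E}\tau(d)=0$, taking conditional expectation gives $\mathbb{E}((\psi_{f,\tau,n}^{-1})'(y)\mid\mathcal{F}_{n-1})=(\psi_{f,\tau,n-1}^{-1})'(y)$. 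Taking a further expectation and using the induction hypothesis yields $\mathbb{E}(\psi_{f,\tau,n}^{-1})'(y)=1$, the base $n=0$ being trivial since $\psi_{f,\tau,0}^{-1}(y)=y$. At dyadic $y$ the argument applies verbatim to either one-sided derivative, since each of the finitely many linear pieces is treated separately.

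For $n=\infty$, fix $y\in[0,1]$ outside the (deterministic) dyadic set. By (\ref{eq:lim psi n inverse}), for $\mathbb{P}$-almost every $\tau$ and almost every $y$, one has $(\psi_{f,\tau,n}^{-1})'(y)\to(\psi_{f,\tau,\infty}^{-1})'(y)$ as $n\to\infty$. The admissibility of $\eta$ (via lemma~\ref{lem:abs cont}) provides a uniform bound $\|(\psi_{f,\tau,n}^{-1})'\|_{2}\le C$, and in fact the pointwise estimate $(\psi_{f,\tau,n}^{-1})'(y)\le\exp(C\eta z_{q_{f}}(y))$ from (\ref{eq:psi-1'<exp sum q}) gives an integrable-in-$\tau$ (indeed, deterministic) dominating function for each fixed $y$. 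Dominated convergence therefore lets one interchange limit and expectation, giving $\mathbb{E}(\psi_{f,\tau,\infty}^{-1})'(y)=\lim_{n}\mathbb{E}(\psi_{f,\tau,n}^{-1})'(y)=1$ for a.e.\ $y$.

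The only mildly delicate point is to justify differentiating $y\mapsto\psi_{f,\tau,n}^{-1}(y)$ inside the expectation; this causes no trouble here precisely because each $\psi_{f,\tau,n}^{-1}$ is piecewise linear with jump locations at the fixed dyadic grid of rank $\le n$, so pointwise (or one-sided) differentiation commutes termwise with the finite sum in (\ref{eq:psi by triangles}). Thus the whole argument is really just the observation that $\{(\psi_{f,\tau,n}^{-1})'(y)\}_{n}$ is a positive martingale with initial value $1$, combined with the uniform integrability supplied by admissibility. No genuine obstacle arises.
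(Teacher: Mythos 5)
Your proof is correct and follows the same route as the paper: differentiate (\ref{eq:psi by triangles}) termwise, use independence of $\tau(d)$ (for $\rk(d)=n$) from the earlier randomness together with $\mathbb{E}\tau(d)=0$ for the finite-$n$ case, then pass to $n=\infty$ via the a.e.\ convergence (\ref{eq:lim psi n inverse}) and the deterministic dominating bound from (\ref{eq:psi-1'<exp sum q}). You merely spell out the conditioning on $\mathcal{F}_{n-1}$ and the martingale structure more explicitly than the paper does.
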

\begin{proof}
For $n$ finite this follows from the definition of $\psi$ (\ref{eq:psi by triangles})
because $\mathbb{E}(\tau(d))=0$ for each $d$ (so $\mathbb{E}(\theta(d))=\frac{1}{2})$,
and further, because if $\rk(d)=n$ then $\tau(d)$ and $\psi_{f,\tau,n-1}^{-1}$
are independent. This last claim follows since $\psi_{n-1}^{-1}$
depends only on $\tau(d')$ for $d'$ with $\rk(d')<n$, in particular
$d'\ne d$, and hence the corresponding values of $\tau$ are independent.

The case of $n=\infty$ follows by taking a limit, which is allowed
since $(\psi_{n}^{-1})'\to(\psi_{\infty}^{-1})'$ for almost every
$x$ (\ref{eq:lim psi n inverse}), for every $\tau$. By Fubini,
for almost every $x$ we have $(\psi_{n}^{-1})'\to(\psi_{\infty}^{-1})'$
almost surely (in $\tau$). Exchanging the limit and the expectation
is allowed since $\left(\psi_{n}^{-1}\right)'(x)$ is bounded by a
bound depending only on $f$, by (\ref{eq:psi-1'<exp sum q}), and
finite for almost every $x$, by lemma \ref{lem:z Olv}, so the bounded
convergence theorem applies.
\end{proof}

\begin{lem}
\label{lem:coupling min density}Let $X_{1}$ and $X_{2}$ be two
random variables on $[-1,1]$ with densities $\rho_{1}$ and $\rho_{2}$
respectively. Let 
\[
p=\int_{-1}^{1}\min\{\rho_{1},\rho_{2}\}.
\]
Then there exists a variable $Q$ (`a coupling of $X_{1}$ and $X_{2}$')
taking values in $[-1,1]^{2}$ such that
\begin{enumerate}
\item $Q_{i}$ has the same distribution as $X_{i}$.
\item $\mathbb{P}(Q_{1}=Q_{2})\ge p$.
\end{enumerate}
\end{lem}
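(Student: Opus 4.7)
The plan is to construct an explicit mixture: with probability $p$ we force $Q_1=Q_2$ by drawing from the common mass $\min\{\rho_1,\rho_2\}$, and with probability $1-p$ we draw the two coordinates independently from the disjoint remainders. The verification that the marginals come out right is then a one-line check.

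In detail, first I would set $\mu(t)\coloneqq\min\{\rho_1(t),\rho_2(t)\}$ so that $p=\int_{-1}^{1}\mu$. Assume $0<p<1$; the boundary cases $p=0$ and $p=1$ are trivial (take $Q_1,Q_2$ independent, respectively take $Q_1=Q_2$ distributed with common density $\rho_1=\rho_2$). Define the three probability densities
\[
\sigma(t)\coloneqq\frac{\mu(t)}{p},\qquad \tilde\rho_i(t)\coloneqq\frac{\rho_i(t)-\mu(t)}{1-p}\quad(i=1,2),
\]
each of which integrates to $1$ and is non-negative. Let $B$ be a Bernoulli variable with $\mathbb{P}(B=1)=p$, let $Y$ have density $\sigma$, and let $Z_1,Z_2$ be independent with densities $\tilde\rho_1,\tilde\rho_2$, with $B,Y,Z_1,Z_2$ mutually independent. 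Define
\[
Q\coloneqq\begin{cases}(Y,Y) & B=1,\\ (Z_1,Z_2) & B=0.\end{cases}
\]

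Now I would verify the two required properties. For the marginals, the density of $Q_i$ is
\[
p\,\sigma(t)+(1-p)\,\tilde\rho_i(t)=\mu(t)+(\rho_i(t)-\mu(t))=\rho_i(t),
\]
so $Q_i$ is distributed like $X_i$. For the coincidence probability, the event $\{B=1\}$ already forces $Q_1=Q_2=Y$, hence $\mathbb{P}(Q_1=Q_2)\ge\mathbb{P}(B=1)=p$, which is the required bound.

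There is essentially no obstacle here: the only thing to be slightly careful about is the degenerate cases $p\in\{0,1\}$ (handled above) and the fact that $\sigma,\tilde\rho_i$ are genuine probability densities, which follows from $0\le\mu\le\rho_i$ pointwise and $\int\mu=p$. Measurability of $\mu$ as the pointwise minimum of two measurable functions is standard, so the construction is well-defined.
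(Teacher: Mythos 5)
Your construction is exactly the one in the paper: draw from the normalised overlap density $\mu/p$ on a coin of probability $p$, and otherwise draw the two coordinates independently from the normalised remainders $(\rho_i-\mu)/(1-p)$; the marginals then sum to $\rho_i$ by inspection. The only cosmetic difference is the names ($Y$/$Z$ vs.\ the paper's $Z$/$Y_i$), and you make the degenerate cases $p\in\{0,1\}$ a little more explicit, so this is a correct proof along the same lines as the paper's.
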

(in fact, $\mathbb{P}(Q_{1}=Q_{2})=p$, but we will have no use for
this fact).
\begin{proof}
If $p=0$ then we can take $Q$ to be two independent copies of $X_{1}$
and $X_{2}$ so assume this is not the case. Let $Z$ be a random
variable with density $\frac{1}{p}\min\{\rho_{1},\rho_{2}\}$. If
$p\ne1$ then let $Y_{i}$ be a random variable with density $\frac{1}{1-p}(\rho_{i}-\min\{\rho_{1},\rho_{2}\})$.
Let $Z$, $Y_{1}$ and $Y_{2}$ be all independent. Now throw an independent
coin which succeeds with probability $p$. If the coin succeeds, let
$Q=(Z,Z)$, if not, let $Q=(Y_{1},Y_{2})$. Clause 2 is now obvious,
and clause 1 is not difficult either, because the density of $Q_{i}$
is $p$ times the density of $Z$ plus $1-p$ times the density of
$Y_{i}$ i.e.
\[
p\cdot\frac{1}{p}\min\{\rho_{1},\rho_{2}\}+(1-p)\cdot\frac{1}{1-p}(\rho_{i}-\min\{\rho_{1},\rho_{2}\})=\rho_{i},
\]
as needed.
\end{proof}
\begin{lem}
\label{lem:ttp coupling}For every $0<x_{1}<x_{2}<1$ and every $\lambda\in(0,\frac{1}{4})$
there exists a random variable $T=(T_{1},T_{2})$ taking values in
$[-1,1]^{2}$ such that
\begin{enumerate}
\item $T_{1}$ and $T_{2}$ are both uniform in $[-1,1]$.
\item If we denote 
\[
y_{i}\coloneqq\begin{cases}
\frac{x_{i}}{A_{i}} & x_{i}\le A_{i}\\
\frac{x_{i}-A_{i}}{1-A_{i}} & x_{i}>A_{i}
\end{cases}\qquad A_{i}\coloneqq\frac{1}{2}+\lambda T_{i}
\]
then 
\[
\mathbb{P}(y_{1}=y_{2},\mathbbm{1}_{x_{1}\le A_{1}}=\mathbbm{1}_{x_{2}\le A_{2}})\ge1-C\frac{x_{2}-x_{1}}{\lambda\min\{x_{2},1-x_{1}\}}.
\]
\end{enumerate}
\end{lem}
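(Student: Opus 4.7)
The plan is to couple the pairs $(y_i, s_i)$ rather than the $T_i$ themselves, where $s_i \coloneqq \mathbbm{1}_{x_i \le A_i}$, and then lift back to a coupling of the $T_i$ by the inverse maps. Write $L \coloneqq \tfrac12 - \lambda$ and $U \coloneqq \tfrac12 + \lambda$, so that $A_i = \tfrac12 + \lambda T_i$ is uniform on $[L,U]$. On each branch the map $A_i \mapsto (y_i, s_i)$ is a M\"obius bijection onto its image, with explicit inverse $A = x_i/y$ on $\{s=+1\}$ and $A = (x_i-y)/(1-y)$ on $\{s=-1\}$. Consequently, any coupling of the laws $\mu_i$ of $(y_i, s_i)$ pushes back to a coupling of the $T_i$ whose marginals are automatically uniform on $[-1,1]$, and the event $\{Y_1 = Y_2\}$ becomes precisely the event $\{y_1 = y_2,\ s_1 = s_2\}$ that we need.

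Having reduced to coupling $\mu_1$ and $\mu_2$, I would apply the max-coupling construction underlying lemma~\ref{lem:coupling min density} (whose proof works verbatim on any measurable space, in our case $[0,1]\times\{\pm 1\}$ with Lebesgue $\times$ counting as reference), which produces a coupling $(Y_1, Y_2)$ with $\mathbb{P}(Y_1 = Y_2) \ge 1 - d_{TV}(\mu_1, \mu_2)$. It then suffices to prove
\[
d_{TV}(\mu_1,\mu_2) \le \frac{C(x_2 - x_1)}{\lambda \min\{x_2,\,1-x_1\}}.
\]
A direct Jacobian calculation gives the joint densities $\rho_i^+(y) = x_i/(2\lambda y^2)$ and $\rho_i^-(y) = (1-x_i)/(2\lambda(1-y)^2)$ on the natural subintervals of $[0,1]$ (determined by $A_i \in [\max(L,x_i), U]$ and $A_i \in [L, \min(U, x_i)]$ respectively). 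In the generic case $L \le x_1 < x_2 \le U$ the supports of $\rho_1^+$ and $\rho_2^+$ differ only in a small left-hand region, and splitting $\int |\rho_1^+ - \rho_2^+|$ at the common left endpoint $y = x_2/U$ gives the bound $(x_2-x_1)/(\lambda x_2)$ after a short computation; the involution $(y,x) \mapsto (1-y, 1-x)$ then yields the symmetric bound $(x_2-x_1)/(\lambda(1-x_1))$ for the $-$-part, and summing gives the required estimate on $d_{TV}$.

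The main obstacle is the bookkeeping for the edge cases where one or both of the $x_i$ fall outside $[L,U]$. When $x_1 \le L$ the measure $\mu_1$ has no $-$-part (so $s_1 = +1$ surely), and symmetrically when $x_2 \ge U$; in those cases the density computation changes but the discarded mass, proportional to $(L - x_1)_+$ or $(x_2 - U)_+$, is bounded by $x_2 - x_1$ and so absorbs into the right-hand side. There is also a wholly degenerate regime in which the supports of $\mu_1$ and $\mu_2$ are disjoint (so $d_{TV} = 1$), but a direct check using $x_1 U < x_2 L$ (respectively $x_1 < L < U < x_2$) forces $x_2 - x_1 \ge c\lambda \min\{x_2, 1-x_1\}$, which makes the claimed bound trivial once $C$ is chosen large enough. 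None of this changes the structure of the argument, only the number of cases to verify; once they are dispatched the lifting described in the first paragraph completes the proof.
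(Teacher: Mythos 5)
Your proof is correct and follows essentially the same route as the paper. The paper also reduces the problem to coupling a derived random variable via the max-coupling lemma \ref{lem:coupling min density}, computes the same density $\tfrac{x_i}{2\lambda y^2}$ (resp.\ $\tfrac{1-x_i}{2\lambda(1-y)^2}$) on the two branches via the same inverse maps, and bounds the total variation by the identical integral estimate; the only cosmetic difference is that the paper folds the pair $(y_i,s_i)$ into a single variable $z_i\in[0,1]$ (with $z_i=y_i/2$ on the $+$ branch and $z_i=1-\tfrac12(1-y_i)$ on the $-$ branch, which is why its densities carry an extra factor of $\tfrac12$), whereas you work directly on $[0,1]\times\{\pm 1\}$.
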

\begin{proof}
Define auxiliary variables $z_{i}$ by 
\[
z_{i}=\begin{cases}
\frac{x_{i}}{2A} & x_{i}\le A\\
\frac{1}{2}+\frac{x_{i}-A}{2(1-A)} & x_{i}>A
\end{cases}
\]
where $A$ is a random variable uniform in $[\frac{1}{2}-\lambda,\frac{1}{2}+\lambda${]}.
Let $\rho_{i}$ be the density of $z_{i}$. A simple calculation shows
that
\begin{gather*}
\rho_{i}(t)=\mathbbm{1}_{I_{i}}(t)\frac{x_{i}}{4\lambda t^{2}}+\mathbbm{1}_{J_{i}}(t)\frac{1-x_{i}}{4\lambda(1-t)^{2}}\\
I_{i}\coloneqq\left[\frac{x_{i}}{1+2\lambda},\frac{x_{i}}{1-2\lambda}\right]\cap\left[0,\frac{1}{2}\right]\quad J_{i}\coloneqq\left[1-\frac{1-x_{i}}{1-2\lambda},1-\frac{1-x_{i}}{1+2\lambda}\right]\cap\left[\frac{1}{2},1\right]
\end{gather*}
and hence
\[
\int\min\{\rho_{1},\rho_{2}\}\ge1-C\frac{x_{2}-x_{1}}{\lambda\min\{x_{2},1-x_{1}\}}\eqqcolon p.
\]
Now couple $z_{1}$ and $z_{2}$ using lemma \ref{lem:coupling min density}
(call the coupling $Q$) and the coupling succeeds (i.e. $Q_{1}=Q_{2})$
with probability at least $p$. Now define
\[
T_{i}=\begin{cases}
\frac{1}{\lambda}\left(\frac{x_{i}}{2Q_{i}}-\frac{1}{2}\right) & Q_{i}\le\frac{1}{2}\\
\frac{1}{\lambda}\left(\frac{1}{2}-\frac{1-x_{i}}{2-2Q_{i}}\right) & Q_{i}>\frac{1}{2}
\end{cases}
\]
and the properties of $T$ follow from the way we constructed it.
\end{proof}
\begin{lem}
\label{lem:Holder replacement}For every continuous $||f||_{\infty}\le1$,
every interval $[r,s]\subseteq[0,1]$ and every $n$ (not necessarily
integer),
\[
\bigg|\int_{r}^{s}\mathbb{E}(f(\psi_{f,\tau,\infty}(x)))\cdot e(nx)\,dx\bigg|\le Cn^{-1/11}.
\]
\end{lem}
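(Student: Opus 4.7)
The plan is to show that $F(x)\coloneqq\mathbb{E}\,f(\psi_{f,\tau,\infty}(x))$ inherits enough regularity from the randomness of $\tau$ that a standard oscillation argument produces the claimed Fourier bound.

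\emph{Coupling step.} Fix $0<x_{1}<x_{2}<1$. Apply lemma \ref{lem:ttp coupling} with $\lambda=\eta\,q_{f}(\tfrac{1}{2})$ to produce a coupling $(T_{1},T_{2})$ of two copies of the top-level randomness $\tau(\tfrac{1}{2})$, and extend it to a coupling of two full environments by setting $\tau^{(i)}(\tfrac{1}{2})=T_{i}$ and $\tau^{(1)}(d)=\tau^{(2)}(d)$ at every other dyadic $d$. Each $\tau^{(i)}$ then has the required i.i.d.\ uniform law, and on the success event of lemma \ref{lem:ttp coupling} (equal rescaled images $y_{1}=y_{2}$ on the same side of the pivot) the recursion (\ref{eq:psi random pm}) forces $\psi_{f,\tau^{(1)},\infty}(x_{1})=\psi_{f,\tau^{(2)},\infty}(x_{2})$, since the deeper randomness agrees. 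Consequently
\[
|F(x_{1})-F(x_{2})|\;\le\;C\,\frac{x_{2}-x_{1}}{\eta\,q_{f}(\tfrac{1}{2})\,\min\{x_{2},\,1-x_{1}\}}.\qquad(\ast)
\]

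\emph{Flat-$f$ step.} The estimate $(\ast)$ is useless when $q_{f}(\tfrac{1}{2})$ is tiny, so we split cases according to whether $q_{f}(\tfrac{1}{2})\ge n^{-a}$ for a parameter $a$ to be optimised. In the remaining regime the lower bound (\ref{eq:q Olv to 1/11 lem}) forces every Haar partial sum $X_{k}(g)$ of $g\coloneqq f-\int f$ to be small in $L^{2}$, so $f$ is $L^{2}$-close to its average; replacing $f$ by a Haar truncation at a scale chosen from $n$ then makes $F$ itself essentially constant, and the Fourier integral is dominated by the trivial bound. The truncation error is absorbed via the uniform modulus of continuity of the continuous function $f$.

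\emph{Fourier step.} Combining $(\ast)$ with the half-period translation identity
\[
2\int_{r}^{s}F(x)e(nx)\,dx\;=\;\int\bigl(F(x)-F(x+\tfrac{1}{2n})\bigr)e(nx)\,dx\;+\;O(n^{-1})
\]
bounds the left-hand side by the $L^{1}$ modulus of continuity of $F$ at scale $1/n$. The integrable endpoint factor $\min\{x_{2},1-x_{1}\}^{-1}$ in $(\ast)$ costs at most a logarithm, and is handled by pairing the trivial bound $|F|\le 1$ on a thin boundary strip with the H\"older estimate (\ref{eq:psi-1 45 54}) to ensure the strip is vanishingly small. Optimising the threshold $n^{-a}$ against the Fourier scale $1/n$ yields the exponent $\tfrac{1}{11}$.

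\emph{Main obstacle.} The delicate point is reconciling the two regimes of $q_{f}(\tfrac{1}{2})$: the estimate $(\ast)$ breaks down precisely where $f$ is flat near $\tfrac{1}{2}$, and the alternative truncation argument gives only $L^{2}$-control of $f-\mathrm{const}$, not pointwise control. Pinning down how this trade-off, together with the boundary factor in $(\ast)$ and the arbitrariness of $[r,s]$, conspire to yield exactly the numerical exponent $\tfrac{1}{11}$ is where the main bookkeeping burden lies.
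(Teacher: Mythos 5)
Your coupling step is exactly the paper's argument for the case $q\coloneqq q_{f}(\tfrac12)\gtrsim n^{-9/10}$: lemma \ref{lem:ttp coupling} applied at the top level, extended by identifying the deeper randomness, gives precisely the estimate~$(\ast)$, which is the paper's (\ref{eq:Holder away}), and then the half-period subtraction finishes that regime. So far so good.

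The flat-$f$ step is where the proposal has a genuine gap, and it is the heart of the lemma. First, a structural objection: the constant $C$ in the statement must be independent of $f$ (it may depend only on $\eta$), because the lemma is later invoked with $f$ replaced by its dyadic localisations $f\circ L_{I}$, whose moduli of continuity are not uniformly controlled. Any argument that ``absorbs the truncation error via the uniform modulus of continuity of~$f$'' therefore cannot close the estimate. Second, and more substantively, the bound coming from (\ref{eq:q Olv to 1/11 lem}) is an $L^{2}$ bound on the Haar martingale $X_{k}(g)$, and $L^{2}$ smallness of the Haar truncation does \emph{not} make $F=\mathbb{E}(f\circ\psi)$ uniformly close to a constant; one cannot ``dominate the Fourier integral by the trivial bound''. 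The mechanism by which $L^{2}$ control of the block averages $\int_{I_{i}}g$ actually cashes out is the core of the paper's small-$q$ argument, and it requires ingredients your proposal never touches: a change of variables $x\mapsto\psi^{-1}(y)$ turning the integral into $\mathbb{E}\int g(y)e(n\psi^{-1}(y))(\psi^{-1})'(y)\,dy$; conditioning on the endpoints of a dyadic block at scale $2^{-k}\approx n^{-7/5}$ and using lemma \ref{lem:exp psi inverse prime 1} to see that $\mathbb{E}\big((\psi^{-1})'\,\big|\,\mathcal{B}_{i}\big)$ is \emph{constant} on the block, producing the product $b_{i}\,2^{k}\!\int_{I_{i}}g$; applying Cauchy--Schwarz so that one factor is $\big(\sum 2^{k}(\int_{I_{i}}g)^{2}\big)^{1/2}$ (small by the $q$ bound) and the other is $\big(\sum 2^{k}b_{i}^{2}\big)^{1/2}\le\Vert(\psi^{-1})'\Vert_{2}$ (bounded by the admissibility of $\eta$); and estimating the residual oscillation of $e(n\psi^{-1}(\cdot))$ within a block via the deterministic H\"older bound (\ref{eq:psi-1 45 54}). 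Without the change of variables, the martingale-conditioning identity, and the $\Vert(\psi^{-1})'\Vert_{2}$ input, the $L^{2}$-to-Fourier conversion has no route, and the proposal does not supply a substitute. You correctly flag this as the delicate point, but flagging it is not resolving it: as written, the flat-$f$ regime is unproved.
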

\begin{proof}
We divide into two different cases according to whether $q\coloneqq q_{f}(\frac{1}{2})>n^{-9/10}$
or not ($q_{f}$ from (\ref{eq:def q Olv})). Denote for brevity
$\psi=\psi_{f,\tau,\infty}$.

We start with the case of $q>n^{-9/10}$. In this case we claim that
for every $x_{1}<x_{2}$ 
\begin{equation}
\left|\mathbb{E}(f(\psi(x_{1}))-\mathbb{E}(f(\psi(x_{2})))\right|\le C\frac{x_{2}-x_{1}}{q\min\{x_{2},1-x_{1}\}}.\label{eq:Holder away}
\end{equation}
To see (\ref{eq:Holder away}) couple $\psi(x_{1})$ and $\psi(x_{2})$
as follows. We define $\tau_{1}$ and $\tau_{2}$ two random (dependent)
variables, each of which has the same distribution as $\tau$ i.e.\ a
uniform function from the dyadic rationals into $[-1,1]$. The definition
of the $\tau_{i}$ is as follows. For all dyadic $p\ne\frac{1}{2}$
we take $\tau_{1}(p)=\tau_{2}(p)$ (and of course independent for
different $p$ and uniformly distributed). For $p=\frac{1}{2}$ we
use the variable given by lemma \ref{lem:ttp coupling}, namely, we
use lemma \ref{lem:ttp coupling} with $\lambda_{\textrm{lemma \ref{lem:ttp coupling}}}=\eta q$
and then let $\tau_{i}(\frac{1}{2})=T_{i}$. We now note that, by
(\ref{eq:psi random pm})
\[
\psi_{f,\tau_{i},\infty}(x_{i})=\begin{cases}
\frac{1}{2}\psi_{f^{-},\tau_{i}^{-},\infty}(\frac{x_{i}}{A_{i}}) & x_{i}\le A_{i}\\
\frac{1}{2}+\frac{1}{2}\psi_{f^{+},\tau_{i}^{+},\infty}(\frac{x_{i}-A_{i}}{1-A_{i}}) & \text{otherwise.}
\end{cases}
\]
Note that $\tau^{\pm}$ do not depend on $i$ (here is where we used
that $\tau_{1}(p)=\tau_{2}(p)$ for all $p\ne\frac{1}{2}$) and of
course neither do $f^{\pm}$. Hence lemma \ref{lem:ttp coupling}
gives
\begin{multline*}
\mathbb{P}(\psi_{f,\tau_{1},\infty}(x_{1})=\psi_{f,\tau_{2},\infty}(x_{2}))\ge\mathbb{P}(y_{1}=y_{2},\mathbbm{1}_{x_{1}\le A_{1}}=\mathbbm{1}_{x_{2}\le A_{2}})\\
\ge1-C\frac{x_{2}-x_{1}}{q\min\{x_{2},1-x_{1}\}},
\end{multline*}
where the $y_{i}$ are from lemma \ref{lem:ttp coupling} (the first
inequality is in fact an equality, but we will not need this fact).
Denote the event on the left hand side by $G$. Then
\begin{align*}
\lefteqn{{\left|\mathbb{E}(f(\psi(x_{1})))-\mathbb{E}(f(\psi(x_{2})))\right|=\left|\mathbb{E}(f(\psi_{f,\tau_{1},\infty}(x_{1})))-\mathbb{E}(f(\psi_{f,\tau_{2},\infty}(x_{2})))\right|}}\qquad\\
 & \stackrel{(*)}{=}\left|\mathbb{E}(\mathbbm{1}_{\neg G}\cdot(f(\psi_{f,\tau_{1},\infty}(x_{1})-f(\psi_{f,\tau_{2},\infty}(x_{2})))\right|\qquad\qquad\qquad\qquad\\
 & \le2\mathbb{P}(\neg G)\le C\frac{x_{2}-x_{1}}{q\min\{x_{2},1-x_{1}\}}
\end{align*}
where the equality marked by $(*)$ follows because $\mathbb{E}(\mathbbm{1}_{G}\cdot\text{the same})=0$,
since under $G$ we have $\psi_{f,\tau_{1},\infty}(x_{1})=\psi_{f,\tau_{2},\infty}(x_{2})$.
This shows (\ref{eq:Holder away}).

To conclude from (\ref{eq:Holder away}) an estimate for the integral
we write 
\[
\int_{r}^{s}=\sum_{k=0}^{\lfloor(s-r)n\rfloor-1}\int_{r+k/n}^{r+(k+1)/n}+\int_{r+\lfloor(s-r)n\rfloor/n}^{s}.
\]
The last integral is over an interval less than $1/n$, so it can
contribute no more than $1/n$. For the main term we write for each
$k$,
\begin{multline*}
I_{k}\coloneqq\int_{r+k/n}^{r+(k+1)/n}\mathbb{E}(f(\psi(x)))\cdot e(nx)\,dx\\
=\int_{r+k/n}^{r+(k+1/2)/n}(\mathbb{E}(f(\psi(x)))-\mathbb{E}(f(\psi(x+\tfrac{1}{2n}))))\cdot e(nx).
\end{multline*}
Hence
\begin{align*}
|I_{k}| & \le\int_{r+k/n}^{r+(k+1/2)/n}|\mathbb{E}(f(\psi(x)))-\mathbb{E}(f(\psi(x+\tfrac{1}{2n})))|\\
 & \stackrel{\textrm{\clap{(\ref{eq:Holder away})}}}{\le}C\int_{r+k/n}^{r+(k+1/2)/n}\frac{1/2n}{q\min\{x+1/2n,1-x\}}\,dx\\
 & \le\frac{C}{n^{2}q\min\{r+(k+1/2)/n,1-(r+(k+1/2)/n)\}}.
\end{align*}
Summing over all $k$ gives
\[
\sum|I_{k}|\le\frac{C}{n^{2}q}\sum_{i=0}^{n/2}\frac{1}{1/2n+i/n}\le\frac{C\log n}{nq}
\]
and because of our assumption that $q>n^{-9/10}$ (which we have not
used so far!) we get a bound of $Cn^{-1/10}\log n$. This finishes
the case $q>n^{-9/10}$.

For the case of $q\le n^{-9/10}$ our first order of things is to
rearrange the question slightly. Recall that we wish to estimate $\int\mathbb{E}(f\circ\psi)\cdot e(nx)$.
We first let $g=f-\int f$ and write
\[
\int_{r}^{s}\mathbb{E}(f\circ\psi)\cdot e(nx)\,dx=\int_{r}^{s}\mathbb{E}(g\circ\psi)\cdot e(nx)\,dx+\bigg(\int f\bigg)\cdot\int_{r}^{s}e(nx)\,dx
\]
and the second term is smaller, in absolute value, than $2/n$. For
the first term, apply Fubini and a change of variables and get
\begin{align}
\int_{r}^{s}\mathbb{E}(g(\psi(x)))\cdot e(nx)\,dx & =\mathbb{E}\int_{r}^{s}g(\psi(x))\cdot e(nx)\,dx\nonumber \\
 & =\mathbb{E}\int_{\psi(r)}^{\psi(s)}g(y)e(n\psi^{-1}(y))(\psi^{-1})'(y)\,dy.\label{eq:first psi^-1'}
\end{align}
Recall next the estimate (\ref{eq:q Olv to 1/11 lem}) for $q=q_{f}(\frac{1}{2})$.
Using it with the condition $q\le n^{-9/10}$ gives 
\[
\sum_{i=1}^{2^{k}}2^{k}\bigg(\int_{(i-1)2^{-k}}^{i2^{-k}}g\bigg)^{2}\stackrel{\textrm{(\ref{eq:q Olv to 1/11 lem})}}{\le}2^{k/2}q\le2^{k/2}n^{-9/10}\qquad\forall k.
\]
Let $k$ be the smallest integer such that $2^{k}>n^{7/5}.$ Then
\begin{equation}
\sum_{i=1}^{2^{k}}2^{k}\bigg(\int_{(i-1)2^{-k}}^{i2^{-k}}g\bigg)^{2}\le Cn^{-1/5}.\label{eq:def zeta repeat}
\end{equation}
For each $i$ write
\begin{align}
\lefteqn{{\int_{i2^{-k}}^{(i+1)2^{-k}}g(y)e(n\psi^{-1}(y))(\psi^{-1})'(y)\,dy}}\qquad\nonumber \\
 & =\int_{i2^{-k}}^{(i+1)2^{-k}}g(y)\big(e(n\psi^{-1}(y))-e(n\psi^{-1}(i2^{-k}))\big)(\psi^{-1})'(y)\,dy\nonumber \\
 & \qquad+e(n\psi^{-1}(i2^{-k}))\int_{i2^{-k}}^{(i+1)2^{-k}}g(y)(\psi^{-1})'(y)\,dy\nonumber \\
 & \eqqcolon\text{I}_{i}+\text{II}_{i}\label{eq:int g e psi-1 I II}
\end{align}
We start with the estimate of $\text{II}_{i}$. We wish to condition
on $\psi^{-1}(i2^{-k})$ and $\psi^{-1}((i+1)2^{-k})$ so denote the
$\sigma$-field of these two variables by $\mathcal{B}_{i}$. Then
\begin{align}
\lefteqn{\mathbb{E}\bigg(\int_{i2^{-k}}^{(i+1)2^{-k}}g(y)(\psi^{-1})'(y)\,dy\,\bigg|\,\mathcal{B}_{i}\bigg)}\qquad\nonumber \\
 & =\int_{i2^{-k}}^{(i+1)2^{-k}}g(y)\mathbb{E}\left((\psi^{-1})'(y)\,|\,\mathcal{B}_{i}\right)\,dy\label{eq:stam Fubini}
\end{align}
by Fubini. We have reached a simple, but crucial point in the argument.
We note that $\psi^{-1}$ conditioned on $\mathcal{B}_{i}$ is a different
version of $\psi^{-1}$. Precisely, let $L$ be the linear increasing
map taking $[0,1]$ onto $[i2^{-k},(i+1)2^{-k}]$. Let $h(x)=f(L(x))$.
Let $\sigma(d)=\tau(L(d))$. Then by (\ref{eq:psi random LI}), 
for every $x\in[i2^{-k},(i+1)2^{-k}]$,
\begin{align*}
\psi_{f,\tau,\infty}^{-1}(x) & =\psi_{f,\tau,\infty}^{-1}(i2^{-k})+b_{i}\psi_{h,\sigma,\infty}^{-1}(2^{k}x-i).\\
b_{i} & \!\coloneqq\psi_{f,\tau,\infty}^{-1}((i+1)2^{-k})-\psi_{f,\tau,\infty}^{-1}(i2^{-k})
\end{align*}
This means that
\[
(\psi_{f,\tau,\infty}^{-1})'(x)=2^{k}b_{i}(\psi_{h,\sigma,\infty}^{-1})'\left(2^{k}x-i\right).
\]
We now use lemma \ref{lem:exp psi inverse prime 1} which states that
$\mathbb{E}((\psi_{h,\sigma,\infty}^{-1})'(y))=1$ for almost all
$y$. This shows that
\[
\mathbb{E}\left((\psi_{f,\tau,\infty}^{-1})'(x)\,|\,\mathcal{B}_{i}\right)=2^{k}b_{i}
\]
for almost all $x$. With (\ref{eq:stam Fubini}) we get
\[
\mathbb{E}\bigg(\int_{i2^{-k}}^{(i+1)2^{-k}}g(y)(\psi^{-1})'(y)\,dy\,\bigg|\,\mathcal{B}_{i}\bigg)=2^{k}b_{i}\int_{i2^{-k}}^{(i+1)2^{-k}}g(y)\,dy
\]
or 
\begin{equation}
\left|\mathbb{E}(\textrm{II}_{i}\,|\,\mathcal{B}_{i})\right|=2^{k}b_{i}\bigg|\int_{i2^{-k}}^{(i+1)2^{-k}}g(y)\,dy\bigg|.\label{eq:g psi 2}
\end{equation}
Let $E$ be the set of indices $i$ such that $[i2^{-k},(i+1)2^{-k}]\subseteq[\psi(r),\psi(s)]$.
Sum the right hand side over all $i\in E$ and use Cauchy-Schwarz
to get
\begin{align}
\lefteqn{{\sum_{i\in E}2^{k}b_{i}\bigg|\int_{i2^{-k}}^{(i+1)2^{-k}}g(y)\,dy\bigg|\le}}\qquad\nonumber \\
 & \bigg(\sum_{i\in E}2^{k}b_{i}^{2}\bigg)^{1/2}\bigg(\sum_{i\in E}2^{k}\bigg(\int_{i2^{-k}}^{(i+1)2^{-k}}g(y)\,dy\bigg)^{2}\bigg)^{1/2}\label{eq:psi int g CS}
\end{align}
and the second term is bounded by (\ref{eq:def zeta repeat}) by $Cn^{-1/10}$.
As for the first term we may write
\begin{align}
\sum_{i\in E}2^{k}b_{i}^{2} & =\sum_{i\in E}2^{k}\bigg(\int_{i2^{-k}}^{(i+1)2^{-k}}(\psi_{f,\tau,\infty}^{-1})'\bigg)^{2}\le\sum_{i\in E}\int_{i2^{-k}}^{(i+1)2^{-k}}\left((\psi_{f,\tau,\infty}^{-1})'\right)^{2}\nonumber \\
 & \le\int_{0}^{1}\left((\psi_{f,\tau,\infty}^{-1})'\right)^{2}.\label{eq:sum psi int psi}
\end{align}
Hence
\begin{align*}
\lefteqn{{\bigg|\mathbb{E}\sum_{i\in E}\text{II}_{i}\bigg|\le\mathbb{E}\sum_{i\in E}|\mathbb{E}(\textrm{II}_{i}\,|\,\mathcal{B}_{i})|}}\qquad\\
\textrm{by (\ref{eq:g psi 2})}\qquad & =\mathbb{E}\bigg(\sum_{i\in E}2^{k}b_{i}\bigg|\int_{i2^{-k}}^{(i+1)2^{-k}}g(y)\,dy\bigg|\bigg)\\
\textrm{by (\ref{eq:psi int g CS})-(\ref{eq:sum psi int psi})}\qquad & \le\mathbb{E}Cn^{-1/10}||(\psi_{f,\tau,\infty}^{-1})'||_{2}\le Cn^{-1/10}.
\end{align*}
(we need to condition on $\mathcal{B}_{i}$ in the first inequality
because $E$ is itself random). In the last inequality we used that
$\eta$ is admissible, recall the definition of admissibility in the
beginning of \S \ref{sec:Random-homeomorphisms}. This terminates
the estimate of the second, more interesting part of (\ref{eq:int g e psi-1 I II}).

To estimate $\textrm{I}_{i}$ we note that from $2^{k}\ge n^{7/5}$
and (\ref{eq:psi-1 45 54}) we have $\psi^{-1}(y)-\psi^{-1}(i2^{-k})<Cn^{-28/25}$
for every $y\in[i2^{-k},(i+1)2^{-k}]$ and hence
\[
|e(n\psi^{-1}(y))-e(n\psi^{-1}(i2^{-k}))|\le Cn^{-3/25}.
\]
we get
\[
\textrm{I}_{i}\le Cn^{-3/25}\int_{i2^{-k}}^{(i+1)2^{-k}}(\psi^{-1})'(y)\,dy=Cn^{-3/25}(\psi^{-1}((i+1)2^{-k})-\psi^{-1}(i2^{-k}))
\]
and hence 
\[
\sum_{i\in E}\textrm{I}_{i}\le Cn^{-3/25}.
\]
With the estimate on $\textrm{II}_{i}$ above we get
\begin{multline*}
\bigg|\mathbb{E}\sum_{i\in E}\int_{i2^{-k}}^{(i+1)2^{-k}}g(y)e(n\psi^{-1}(y))(\psi^{-1})'(y)\,dy\bigg|\\
=\bigg|\mathbb{E}\sum_{i\in E}\textrm{I}_{i}+\mathbb{E}\sum_{i\in E}\textrm{II}_{i}\bigg|\le Cn^{-1/10}.
\end{multline*}
This is almost what we need, but what we need precisely is the integral
from $\psi(r)$ to $\psi(s)$. The difference between it and $\sum_{i\in E}\int_{i2^{-k}}^{(i+1)2^{-k}}$
is two short intervals, $[\psi(r),i_{1}2^{-k}]$ and $[i_{2}2^{-k},\psi(s)]$
for some $i_{1}$ and $i_{2}$. The integral on these is much simpler
to estimate as they are short. For example, we may estimate
\begin{multline*}
\mathbb{E}\bigg|\int_{\psi(r)}^{i_{1}2^{-k}}g(y)e(n\psi^{-1}(y))(\psi^{-1})'(y)\,dy\bigg|\le2\mathbb{E}\int_{\psi(r)}^{i_{1}2^{-k}}(\psi^{-1})'\\
\le2(\mathbb{E}(i_{1}2^{-k}-\psi(r)))^{1/2}\bigg(\mathbb{E}\int\left((\psi^{-1})'\right)^{2}\bigg)^{1/2}\le C\cdot2^{-k/2}\le Cn^{-7/10}
\end{multline*}
and similarly for the other interval. We get
\[
\bigg|\mathbb{E}\bigg(\int_{\psi(r)}^{\psi(s)}g(y)e(n\psi^{-1}(y))(\psi^{-1})'(y)\,dy\bigg)\bigg|\le Cn^{-1/10}.
\]
Recalling (\ref{eq:first psi^-1'}) from the beginning of the proof
of the small $q$ case we get
\[
\bigg|\int_{r}^{s}\mathbb{E}(g(\psi(x)))\cdot e(nx)\,dx\bigg|\le Cn^{-1/10}
\]
and as explained just before (\ref{eq:first psi^-1'}), this gives
the same result for $f$. The lemma is thus proved.
\end{proof}
\begin{lem}
\label{lem:j}Let $J_{1},J_{2}\subset[-1,1]$ be intervals and let
$\varepsilon\coloneqq\max\{|y_{1}-y_{2}|:y_{i}\in J_{i}\}$. Let
$[r,s]\subseteq[0,1]$ be an interval and let $n\in\mathbb{R}$ and
define
\[
F_{i}\coloneqq\int_{r}^{s}\mathbb{E}(f(\psi_{f,\tau,\infty}(x))\,|\,\tau(\tfrac{1}{2})\in J_{i})\cdot e(nx)\,dx.
\]
Then
\[
|F_{1}-F_{2}|\le C\varepsilon^{1/22},
\]
uniformly in $n$.
\end{lem}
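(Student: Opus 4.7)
The plan is to reduce $F_1-F_2$ to comparing a deterministic one-variable function $H(A)$ at two different inputs, where $A=\tfrac{1}{2}+\eta q_f(\tfrac{1}{2})\tau(\tfrac{1}{2})$ records the first branching point of $\psi_{f,\tau,\infty}$. Conditioning on $\tau(\tfrac{1}{2})=a$, using the recursion (\ref{eq:psi random pm}), and changing variables $u=x/A$ on $[r,s]\cap[0,A]$ and $v=(x-A)/(1-A)$ on $[r,s]\cap[A,1]$, the conditional expectation becomes
\[
H(A):=A\,G^{-}\!\bigl(nA,\tfrac{r}{A}\wedge 1,\tfrac{s}{A}\wedge 1\bigr)+(1-A)e(nA)\,G^{+}\!\bigl(n(1-A),\tfrac{r-A}{1-A}\vee 0,\tfrac{s-A}{1-A}\bigr),
\]
where $f^-(y):=f(y/2)$, $f^+(y):=f((1+y)/2)$, and $G^{\pm}(m,\alpha,\beta):=\int_{\alpha}^{\beta}\mathbb{E}[f^{\pm}(\psi_{f^{\pm},\sigma,\infty}(u))]\,e(mu)\,du$. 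By independence of $\tau(\tfrac{1}{2})$ from $\{\tau(d):d\ne\tfrac{1}{2}\}$ one has $F_i=\mathbb{E}[H(A_i)]$ with $A_i$ uniform on $\tfrac{1}{2}+\eta q_f(\tfrac{1}{2})J_i$.

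Two complementary bounds on $H$ are then available. Since $f^{\pm}$ are continuous with $\|f^{\pm}\|_{\infty}\le 1$, lemma~\ref{lem:Holder replacement} applied to $f^{\pm}$ gives $|G^{\pm}(m,\alpha,\beta)|\le C|m|^{-1/11}$, hence $|H(A)|\le Cn^{-1/11}$. Differentiating the formula for $H$ in $A$ on each of the three regimes $A<r$, $r\le A\le s$, $A>s$ and using admissibility (which forces $A\in[1/4,3/4]$), together with the elementary bounds $|\partial_{m}G^{\pm}|\le 2\pi$ and $|\partial_{\alpha}G^{\pm}|,|\partial_{\beta}G^{\pm}|\le 1$, yields $|\partial_A H|\le C(1+|n|)$ wherever defined; continuity of $H$ at the transitions $A=r,s$ then promotes this to the global Lipschitz bound
\[
|H(A_1)-H(A_2)|\le C(1+|n|)|A_1-A_2|.
\]

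A quantile coupling completes the argument: let $U$ be uniform on $[0,1]$, set $\tau^{(i)}(\tfrac{1}{2})$ to be the $U$-quantile of the uniform distribution on $J_i$, and let $\tau^{(i)}(d)=\tau(d)$ (i.i.d.\ uniform on $[-1,1]$) for all other dyadic $d$. Then $|\tau^{(1)}(\tfrac{1}{2})-\tau^{(2)}(\tfrac{1}{2})|\le 3\varepsilon$ surely, so $|A_1-A_2|\le C\varepsilon$. Combining the two bounds on $H$ gives
\[
|F_1-F_2|\le\mathbb{E}|H(A_1)-H(A_2)|\le C\min\bigl\{n^{-1/11},(1+|n|)\varepsilon\bigr\},
\]
and optimising at $|n|=\varepsilon^{-11/12}$ yields $|F_1-F_2|\le C\varepsilon^{1/12}$, which is stronger than the claimed $C\varepsilon^{1/22}$.

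The main obstacle is the first step: carrying out the change of variables consistently across the three position regimes of $A$ relative to $[r,s]$, verifying continuity of $H$ at $A=r$ and $A=s$ so that the piecewise Lipschitz bound extends globally, and keeping all constants independent of $f$, $n$, $r$, $s$, and the intervals $J_i$. The two bounds on $H$ follow almost immediately from lemma~\ref{lem:Holder replacement} together with routine calculus, and the final interpolation between decay and Lipschitz estimates is the standard balancing argument.
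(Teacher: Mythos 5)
Your argument is correct and reaches a strictly stronger conclusion (exponent $1/12$ rather than the paper's $1/22$), while sharing the same basic skeleton as the paper's proof: condition on $\tau(\tfrac12)$, use lemma~\ref{lem:Holder replacement} when $n$ is large and the smooth dependence on the conditioning value when $n$ is small. The difference is in execution. The paper first splits $F_i=F_i^-+F_i^0+F_i^+$ along $[0,\alpha]\cup[\alpha,\beta]\cup[\beta,1]$ (with $[\alpha,\beta]$ of length $C\varepsilon$), throws away $F_i^0$, and then for $F^\pm$ does a pointwise comparison of $F^\pm(y_1)$ against $F^\pm(y_2)$ via the decomposition $\mathrm{I}+\mathrm{II}+\mathrm{III}$; the threshold it places at $n=\varepsilon^{-1/2}$ is chosen for convenience rather than optimality, which is where the weaker $\varepsilon^{1/22}$ comes from. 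You instead package the entire conditional expectation into a single function $H(A)$, observe it is continuous and piecewise $C^1$ with $|H'|\le C(1+|n|)$ (using only $|\partial_m G^\pm|\le 2\pi$ on $[0,1]$ and $|\partial_\alpha G^\pm|,|\partial_\beta G^\pm|\le 1$, plus $A\in[\tfrac14,\tfrac34]$), and compare $F_1$ with $F_2$ via a quantile coupling with $|A_1-A_2|\le C\varepsilon$ surely. The change-of-variables and continuity checks across $A=r,s$ that you flag as the remaining work are genuinely routine: $G^-$ vanishes as $A\downarrow r$ from both sides, $G^+$ vanishes as $A\uparrow s$ (with the implicit $\tfrac{s-A}{1-A}\vee 0$ in the $\beta$-slot, a harmless omission in your writeup), so the piecewise Lipschitz bound indeed globalises. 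Balancing $n^{-1/11}$ against $(1+|n|)\varepsilon$ at $|n|=\varepsilon^{-11/12}$ then gives $C\varepsilon^{1/12}$, comfortably stronger than what the rest of the paper uses. For $n<0$ you should read $|n|^{-1/11}$ from lemma~\ref{lem:Holder replacement} via conjugation (the inner expectation is real), and $n=0$ is trivial; neither point affects the estimate. Net: same strategy as the paper, cleaner bookkeeping, sharper exponent.
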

\begin{proof}
We keep the notation $q=q_{f}(\frac{1}{2})$. Denote  
\[
\alpha\coloneqq\min\{\tfrac{1}{2}+\eta qx:x\in J_{1}\cup J_{2}\}\qquad\beta\coloneqq\max\{\tfrac{1}{2}+\eta qx:x\in J_{1}\cup J_{2}\}
\]
so that $\beta-\alpha\le C\varepsilon$ and then
\begin{align*}
F_{i}^{-} & \coloneqq\int_{[r,s]\cap[0,\alpha]}\mathbb{E}(f(\psi_{f,\tau,\infty}(x))\,|\,\tau(\tfrac{1}{2})\in J_{i})\cdot e(nx)\,dx\\
F_{i}^{+} & \coloneqq\int_{[r,s]\cap[\beta,1]}\mathbb{E}(f(\psi_{f,\tau,\infty}(x))\,|\,\tau(\tfrac{1}{2})\in J_{i})\cdot e(nx)\,dx\\
F_{i}^{0} & \coloneqq F_{i}-(F_{i}^{-}+F_{i}^{+})
\end{align*}
and note that $|F_{i}^{0}|\le C\varepsilon$ as it can be written
as an integral over an interval of length at most $C\varepsilon$.
Hence we need to estimate $|F_{1}^{\pm}-F_{2}^{\pm}|$. The proofs
of the $+$ and $-$ cases are identical, so for brevity, we will
do the calculations for $F^{-}$.

Let us first dispense with an easy case. Assume that $n\ge\varepsilon^{-1/2}$.
In this case, fix some $y\in J_{1}\cup J_{2}$ and condition on $\tau(\frac{1}{2})=y$.
Denote
\[
F^{-}(y)\coloneqq\int_{[r,s]\cap[0,\alpha]}\mathbb{E}(f(\psi_{f,\tau,\infty}(x))\,|\,\tau(\tfrac{1}{2})=y)\cdot e(nx)\,dx
\]
under the conditioning, $\psi_{f,\tau,\infty}(x)=\frac{1}{2}\psi_{f^{-},\tau^{-},\infty}(x/A)$,
$A\coloneqq\frac{1}{2}+\eta qy$, so
\[
F^{-}(y)=\int_{[r,s]\cap[0,\alpha]}\mathbb{E}(f^{-}(\psi_{f^{-},\tau^{-},\infty}(x/A)))\cdot e(nx)\,dx
\]
We change variables and get 
\begin{gather}
F^{-}(y)=\int_{I(y)}\mathbb{E}(f^{-}(\psi_{f^{-},\tau^{-},\infty}(x)))\cdot e(nxA)\cdot A\,dx,\label{eq:F- I(y)}\\
I(y)\coloneqq\left[\frac{r}{A},\frac{s}{A}\right]\cap\left[0,\frac{\alpha}{A}\right].\nonumber 
\end{gather}
We use lemma \ref{lem:Holder replacement} with $f_{\textrm{lemma \ref{lem:Holder replacement}}}=f^{-}$,
$n_{\textrm{lemma \ref{lem:Holder replacement}}}=nA$ and $[r,s]_{\textrm{lemma \ref{lem:Holder replacement}}}=I(y)$
and get 
\[
|F^{-}(y)|\le A\cdot C(nA)^{-1/11}\le C\varepsilon^{1/22}.
\]
Integrating over $y$ in either $J_{i}$ gives that both $F_{i}$
satisfy $|F_{i}|\le C\varepsilon^{1/22}$ and we are done.

Assume therefore that $n<\varepsilon^{-1/2}$. Let $y\in J_{1}\cup J_{2}$
and consider $\psi_{f,\tau,\infty}$ conditioned on $\tau(\frac{1}{2})=y$.
We keep the notations $A$, $F^{-}(y)$ and $I(y)$ above, and write
$\psi=\psi_{f^{-},\tau^{-},\infty}$ for brevity. We apply (\ref{eq:F- I(y)})
to two different $y_{i}$ and subtract, getting
\begin{gather*}
F^{-}(y_{1})-F^{-}(y_{2})=\textrm{I}+\textrm{II}+\textrm{III}\\
\begin{aligned}\textrm{I} & \coloneqq\int_{I(y_{1})\cap I(y_{2})}\mathbb{E}(f^{-}(\psi(x))\cdot(e(nxA_{1})A_{1}-e(nxA_{2})A_{2})\,dx\\
\textrm{II} & \coloneqq\int_{I(y_{1})\setminus I(y_{2})}\mathbb{E}(f^{-}(\psi(x))\cdot e(nxA_{1})A_{1}\,dx\\
\textrm{III} & \coloneqq-\int_{I(y_{2})\setminus I(y_{1})}\mathbb{E}(f^{-}(\psi(x))\cdot e(nxA_{2})A_{2}\,dx.
\end{aligned}
\end{gather*}
and again II and III are integrals over intervals of length at most
$C\varepsilon$, so may be ignored. As for I, we write
\[
|e(nxA_{1}x)-e(nxA_{2})|\le2\pi nx|A_{1}-A_{2}|x\le Cn\varepsilon
\]
so 
\[
\left|e(nxA_{1})A_{1}-e(nxA_{2})A_{2}\right|\le Cn\varepsilon\cdot A_{1}+C\varepsilon\le Cn\varepsilon.
\]
which we plug into the integral in I to get $\textrm{I}\le Cn\varepsilon$.
We get
\[
|F^{-}(y_{1})-F^{-}(y_{2})|\le Cn\varepsilon+C\varepsilon\le C\varepsilon^{1/2}
\]
by our assumption that $n<\varepsilon^{-1/2}$. Integrating over $y_{1}$
and $y_{2}$ gives
\[
|F_{1}^{-}-F_{2}^{-}|\le C\varepsilon^{1/2}.
\]
Since we covered both small and large $n$, we get
\[
|F_{1}^{-}-F_{2}^{-}|\le C\varepsilon^{1/22}.
\]
The estimate for $|F_{1}^{+}-F_{2}^{+}|$ is identical, and the lemma
is proved.
\end{proof}

\section{\label{sec:Reducing-randomness}Reducing randomness}

We have spent many pages on construction of a random homeomorphism
$\psi$ such that the \emph{expectation} of $f\circ\psi$ has some
good properties. In this last section we are going to remove the randomness
step by step, always controlling the expectation, until we are finally
left with a single $\psi$ such that $f\circ\psi$ has the same good
properties.

Let $I$ be a map giving for each dyadic rational $d=k2^{-n}\in(0,1)$
an interval $I(d)\subseteq[-1,1]$ (possibly degenerate). For each
$I$ we define $\phi_{I}=\phi_{I,f,\eta}$ to be our homeomorphism
$\psi_{f,\tau,\infty}$ with $\tau(d)$ taken uniformly in $I(d)$
for all $d$. We call such $I$ an RH-restrictor (RH standing for
Random Homeomorphism). Sometime we will denote $\tau_{I}$ for $\tau$
which has this distribution.

From now on, when we say `condition $\psi$ on $\psi^{-1}(\frac{1}{2})=y$'
or `condition $\psi$ on $\psi(y)=\frac{1}{2}$' we will actually
mean $\psi_{f,\tau,\infty}$ with $\tau(\frac{1}{2})=(y-\frac{1}{2})/\eta q_{f}(\frac{1}{2})$
and the other $\tau(d)$ uniform in $[-1,1]$ (this is, of course,
a version of the usual conditional probability, but it is defined
for all $y$ and not just for almost all $y$, which is convenient).
Another version of the same thinking is the following
\begin{lem}
\label{lem:I dont like induction, to be honest about it}Let $\mathcal{U}=(U_{i})$
be a finite partition of $[0,1]$ into dyadic intervals (i.e.\ $\bigcup\mathcal{U}=[0,1]$
and different $U_{i}\in\mathcal{U}$ are disjoint except possibly
their endpoints). Let $I$ be an RH-restrictor which is degenerate
for any $d$ in the boundary of any $U\in\mathcal{U}$. Then $\phi_{I}^{-1}(U)$
is deterministic for any $U\in\mathcal{U}$ (i.e.\ it is dependent
on $f$ but not random).
\end{lem}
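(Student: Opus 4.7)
The plan is a short induction on the rank $\rk(d)$ of endpoints $d$ of intervals in $\mathcal{U}$. Since $\phi_I$ is an increasing homeomorphism of $[0,1]$, we have $\phi_I^{-1}(U) = [\phi_I^{-1}(a),\phi_I^{-1}(b)]$ whenever $U = [a,b]$, so it suffices to show that $\phi_I^{-1}(d)$ is deterministic for every $d$ in the set
\[
\mathcal{B} \coloneqq \{0,1\} \cup \bigcup_{U \in \mathcal{U}} \partial U.
\]

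The key combinatorial input is the closure property: for every $d = k 2^{-n} \in \mathcal{B} \cap (0,1)$ with $k$ odd, both neighbours $d - 2^{-n}$ and $d + 2^{-n}$ also belong to $\mathcal{B}$. To see this I would examine the dyadic interval $[d, d + 2^{-n}]$. Because $\mathcal{U}$ partitions $[0,1]$ into dyadic intervals and no element of $\mathcal{U}$ can straddle the interior boundary point $d$, this interval is either (a) contained in a single leaf $U \in \mathcal{U}$ whose left endpoint is $d$, or (b) tiled by several leaves of $\mathcal{U}$ lying to the right of $d$. In case (a), the leaf $U = [j 2^{-m}, (j+1)2^{-m}]$ must satisfy $j 2^{-m} = k 2^{-n}$ with $k$ odd, forcing $m \ge n$, while $U \supseteq [d, d+2^{-n}]$ forces $m \le n$; so $m = n$ and $d + 2^{-n}$ is the right endpoint of $U$. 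In case (b), $d + 2^{-n}$ is the right endpoint of the last leaf in the tiling. Either way $d + 2^{-n} \in \mathcal{B}$, and a symmetric argument handles $d - 2^{-n}$.

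With this in hand I would induct on $\rk(d)$ for $d \in \mathcal{B}$. The base is $\phi_I^{-1}(0) = 0$ and $\phi_I^{-1}(1) = 1$. For the inductive step, let $d = k 2^{-n} \in \mathcal{B}\cap(0,1)$ with $k$ odd. The recursion (\ref{eq:psi by interpolation}), combined with the fact noted after that equation that $\psi_{\theta,n}^{-1}(d)$ stabilises to $\psi_{\theta,\infty}^{-1}(d)$ once $n\ge\rk(d)$, gives
\[
\phi_I^{-1}(d) = \phi_I^{-1}(d - 2^{-n}) + \theta(d)\bigl(\phi_I^{-1}(d + 2^{-n}) - \phi_I^{-1}(d - 2^{-n})\bigr).
\]
By the closure property, $d \pm 2^{-n} \in \mathcal{B}$ and both have rank strictly smaller than $n$, so the inductive hypothesis makes $\phi_I^{-1}(d \pm 2^{-n})$ deterministic. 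Since $d$ lies in the boundary of some $U \in \mathcal{U}$, the hypothesis of the lemma forces $I(d)$ to be a singleton, so $\theta(d)$ is deterministic as well, and the recursion closes.

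The only genuine content is the combinatorial closure property of $\mathcal{B}$ under the dyadic-tree neighbour operation; this is where I would spend the care, since it really uses that $\mathcal{U}$ is a partition into \emph{dyadic} intervals rather than arbitrary subintervals. Everything else is mechanical: once the closure is established, the induction on rank and the appeal to the interpolation recursion are completely formal.
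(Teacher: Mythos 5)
Your proof is correct, but it routes the induction differently from the paper. The paper inducts on $|\mathcal{U}|$: it takes a minimal-size leaf $U$, observes that its sibling $V\setminus U$ (where $V$ is the dyadic parent) must also be a leaf, coarsens the partition by replacing $\{U,V\setminus U\}$ with $\{V\}$, and then uses (\ref{eq:psi by interpolation}) at the midpoint of $V$ to recover the one boundary point that was lost. Your proof instead isolates the set $\mathcal{B}$ of all boundary points, establishes the closure fact that for $d\in\mathcal{B}$ of rank $n$ the two rank-$<n$ neighbours $d\pm2^{-n}$ are also in $\mathcal{B}$, and then inducts upward on rank using the same interpolation recursion. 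The combinatorial content is the same in both cases --- a finite partition of $[0,1]$ into dyadic intervals is the leaf set of a finite binary subtree rooted at $[0,1]$ --- but the paper packages it as ``siblings of minimal leaves are leaves'' (which is quicker to justify) while you package it as ``$\mathcal{B}$ is closed under the neighbour operation'' (which you correctly flag as the place requiring care, and which you argue correctly via the straddling-dichotomy for dyadic intervals). Your route is more explicit about the mechanism by which determinism propagates through $\phi_I^{-1}$, at the modest cost of a slightly longer closure argument; the paper's route is slicker because the inductive hypothesis already applies verbatim to the coarsened partition. Both are valid.
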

\begin{proof}
We go by induction on the number of intervals in the partition. Assume
the claim has been proved for all partitions with less than $n$ intervals,
and let $\mathcal{U}$ be with $|\mathcal{U}|=n$. Let $U\in\mathcal{U}$
be an interval with smallest size. Let $V$ be $U$'s father i.e.\ $U\subset V$
and $|V|=2|U|.$ Then $V\setminus U$ must also be in $\mathcal{U}$.
Replacing $U$ and $V\setminus U$ with $V$ gives a new partition
$\mathcal{U}'$ with $|\mathcal{U}'|<n$ hence the induction assumption
gives that $\phi_{I}(W)$ is deterministic for any $W\in\mathcal{U}'$
and in particular for all $W\in\mathcal{U}$ other than $U$ and $V\setminus U$.
Next, let $d_{0}$, $d_{1}$ and $d_{2}$ be the beginning, centre
and end of $V$, respectively. Then $\phi_{I}^{-1}(d_{0})$ and $\phi_{I}^{-1}(d_{2})$
are deterministic by the inductive assumption and $I(d_{1})$ is degenerate,
say $\{x\}$ for some $x\in[-1,1]$ (the last claim is because $d_{1}$
is a point in the boundary of $U$). Hence, by lemma \ref{lem:local psi inverse},
\[
\phi_{I}^{-1}(d_{1})=\phi_{I}^{-1}(d_{0})+(\phi_{I}^{-1}(d_{2})-\phi_{I}^{-1}(d_{0}))\left(\frac{1}{2}+\eta q_{f}(d_{1})\tau(d_{1})\right)
\]
which is deterministic, proving the claim.
\end{proof}
\begin{defn*}
For a continuous function $f$ with $||f||_{\infty}\le1$ and a $\delta>0$,
we say that an RH-restrictor $I$ is of type $(f,\delta)$ if there
exists a finite partition $\mathcal{U}=(U_{i})$ of $[0,1]$ into
dyadic intervals and an integer $m\ge-1$ with the following properties.
\begin{enumerate}
\item \label{enu:point below n}$I(d)$ is a single point for every $d$
in the boundary of any $U_{i}$.
\item \label{enu:approx eps}|$\phi_{I,f}^{-1}(U_{i})|\in[\frac{1}{4}\delta,\delta]$
for all $i$.
\item \label{enu:split}If $|\phi_{I,f}^{-1}(U_{i})|>\frac{1}{2}\delta$
then $I(d)$ is a dyadic interval of length $2^{-m}$ for $d$ in
the centre of $U_{i}$.
\item \label{enu:other d}$I(d)=[-1,1]$ for all other $d$.
\end{enumerate}
\end{defn*}
\begin{figure}
\input{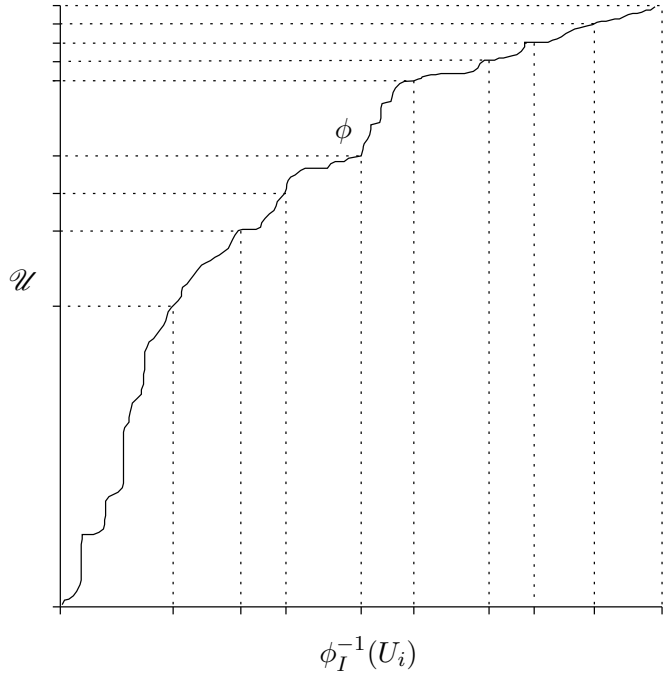}

\caption{An RH restrictor: on the left the dyadic partition, on the bottom
the $\delta$-uniform partition.\label{fig:An-RH-descriptor:}}
\end{figure}
See figure \ref{fig:An-RH-descriptor:}. Of course, not for every
RH-restrictor $I$ one may find some $f$ and $\delta$ such that
$I$ is of type $(f,\delta)$ --- such $I$ are quite special and
if we do not wish to specify $f$ and $\delta$ we will simply say
that `$I$ has a type'. Let us remark on the appearance of $\phi_{I}^{-1}(U_{i})$
in properties \ref{enu:approx eps} and \ref{enu:split} (and indirectly
in \ref{enu:other d} too, as it applies only when \ref{enu:split}
does not). Of course, $\phi_{I}$ is random. But property \ref{enu:point below n}
implies, using lemma \ref{lem:I dont like induction, to be honest about it},
that in fact, $\phi_{I}^{-1}(U_{i})$ is deterministic, and all randomness
is inside the $U_{i}$. Hence properties \ref{enu:approx eps} and
\ref{enu:split} are also a function of $f$ and $I$, and are not
random.

We call the $\mathcal{U}$ and the $m$ the corresponding partition
and the corresponding value (to $I$), respectively.

The following lemma is the main lemma of this paper. It implements
the `reduction of randomness' strategy that we outlined in the beginning
of this section. The reduction in randomness is implemented by moving
from an RH-restrictor for a given $m$, to an RH-restrictor with $m+1$.
Recall the standard definition of the modulus of continuity of a function,
\[
\omega_{f}(\delta)=\sup\{|f(x)-f(y)|:x,y\text{ s.t.}\,\dist(x,y)<\delta\}
\]
where here and below dist is considered cyclically in $[0,1]$ e.g.\ $\dist(0.9,0)=0.1$.
\begin{lem}
\label{lem:main}Let $f$ be a continuous function satisfying $||f||_{\infty}\le\frac{1}{2}$,
let $\eta$ be admissible (recall the definition from page \pageref{def:adimissible eta}),
let $\delta>0$ and let $I$ be an RH-restrictor of type $(f,\delta)$.
Denote by $\mathcal{U}=(U_{i})$ the corresponding dyadic partition
of $[0,1]$, arranged in increasing order, and by $m$ the corresponding
value.

Further, for every $\xi\in[0,1]$ denote by $B_{\xi}$ the union of
the $\phi_{I}^{-1}(U_{i})$ that contains $\xi$ and its two immediate
neighbours. We understand `neighbours' cyclically, e.g.\ if $\xi\in\phi_{I}^{-1}(U_{1})$
then $B_{\xi}$ is the union of $\phi_{I}^{-1}(U_{1})$, $\phi_{I}^{-1}(U_{2})$
and the last $\phi_{I}^{-1}(U_{i})$.

Then there exists an RH-restrictor $J$ of type $(f,\delta)$ with
the same corresponding partition $\mathcal{U}$, with the corresponding
value being $m+1$ and with the following properties
\begin{enumerate}
\item $J(d)\subseteq I(d)$ for all $d$,
\item For every $u\in\mathbb{N}$, every $r\in\{2^{u-1},\dotsc,2^{u}-1\}$
and every $\xi\in2^{-u-2}\mathbb{Z}\cap[0,1)$,
\begin{multline*}
\qquad\qquad\bigg|\int_{E_{\xi}}\big(\mathbb{E}(f(\phi_{I}(x)))-\mathbb{E}(f(\phi_{J}(x)))\big)\cdot D_{r}(x-\xi)\,dx\bigg|\\
\le C\min\{\delta2^{u},1/\delta2^{u}\}^{c}e^{-cm}\omega_{f}(\delta^{c})^{c}.
\end{multline*}
where the set over which integration is performed $E_{\xi}=E_{\xi}(\mathcal{U},I,f,\delta)$
is defined by
\begin{equation}
\qquad\qquad E_{\xi}\coloneqq\begin{cases}
[0,1]\setminus B_{\xi} & 2^{u}>\frac{1}{\delta}\\{}
[0,1] & 2^{u}\le\frac{1}{\delta}.
\end{cases}\label{eq:def Ek}
\end{equation}
\end{enumerate}
\end{lem}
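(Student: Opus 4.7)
The plan is to parallel the proof of lemma \ref{lem:main Holder} in the sketch, but adapted to the irregular partition $\mathcal{U}$ and to the modulus-of-continuity factor. The only freedom in choosing $J$ lies in the ``active'' intervals $U_i\in\mathcal{U}$ with $|\phi_I^{-1}(U_i)|>\delta/2$: for each such $i$, writing $d_i$ for the centre of $U_i$, we must pick $J(d_i)$ to be either the left or the right half of the dyadic interval $I(d_i)$ of length $2^{-m}$. Encode this choice by signs $\epsilon_i\in\{\pm1\}$ and denote the resulting $J$ by $J^\epsilon$. By lemma \ref{lem:I dont like induction, to be honest about it} the sets $\phi_I^{-1}(U_i)$ are deterministic and the randomness inside each one is independent of the others, so we may write
\[
\mathbb{E}(f\circ\phi_{J^\epsilon})=\mathbb{E}(f\circ\phi_I)+\sum_i\epsilon_i\Delta_i,
\]
where $\Delta_i$ is supported on $\phi_I^{-1}(U_i)$ and equals $\tfrac{1}{2}(F_i^+-F_i^-)$ with $F_i^{\pm}$ the expectation of $f\circ\phi_{J^\epsilon}$ conditioned on $\epsilon_i=\pm1$. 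Our goal is then to find $\epsilon$ such that $\bigl|\int_{E_\xi}\sum_i\epsilon_i\Delta_iD_r(x-\xi)\,dx\bigr|$ is small, and we will feed this to lemma \ref{lem:renormalisation}.

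The next step is to establish two local estimates for $w_i(r,\xi)\coloneqq \int\Delta_i\,e(r(x-\xi))\,dx$ (and for the partial sums of the corresponding complex exponentials used in the binary decomposition of $D_r$), analogous to lemmas \ref{lem:Fpm Holder} and \ref{lem:int Delta small Holder}. The first is obtained by applying lemma \ref{lem:j} locally on $\phi_I^{-1}(U_i)$ via the locality formula \eqref{eq:psi random LI}: the two halves of $I(d_i)$ are separated by a distance $\sim 2^{-m}$, hence lemma \ref{lem:j} yields a factor $2^{-cm}$; moreover, replacing $f$ by $f-f(\phi_I(\xi_i))$ for $\xi_i$ in $U_i$ produces the factor $\omega_f(\delta^c)^c$ (one uses the H\"older estimate \eqref{eq:psi-1 45 54} to see that $|U_i|\le\delta$ forces the relevant variation of $f$ to be $\le\omega_f(\delta^c)$). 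This gives a bound of the form $C\,\omega_f(\delta^c)^c 2^{-cm}\min\{|\xi-c_i|^{-1},1/\delta\}$ times the length of the exponential window, where $c_i$ is the centre of $\phi_I^{-1}(U_i)$. The second estimate, valid when $r$ is much larger than $1/\delta$, comes from cancellation of the exponential: conditioning on the endpoints of $\phi_I^{-1}(U_i)$ reduces $F_i^{\pm}$ to a rescaled copy of $\mathbb{E}(f\circ\psi_{f,\tau,\infty})$ on $[0,1]$, and lemma \ref{lem:Holder replacement} gives power decay $(r\delta)^{-c}$.

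With these two estimates we rearrange the quantities $w_i$ (together with the partial-sum variants $w_i^{\pm}$ from the dyadic block decomposition of $D_r$) into a list of vectors $v_{i,j}$ indexed by $(r,\xi,s,t)$, taking $l(j)$ to be the index of the interval closest to $\xi$ and $b(j)$ a power of $2$ that quantifies both $|u-\log_2(1/\delta)|$ and the block scale $s$. Standard index counting yields $|\{j:l(j)=l,b(j)=b\}|\le Mb^\gamma$ for some $\gamma$ (the computation is the same book-keeping as in the H\"older sketch, and absolutely absorbs the factors $\omega_f(\delta^c)^c e^{-cm}$ which multiply \emph{all} the bounds uniformly). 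Applying lemma \ref{lem:renormalisation} with $\alpha=1$, $\beta=1/50$ produces signs $\epsilon_i$ such that the resulting sums are bounded by $Cb(j)^{-1/50}\omega_f(\delta^c)^c 2^{-cm}$, which converts, after reassembling $D_r$ and summing the geometric series in $|u-\log_2(1/\delta)|$, into the bound asserted in the lemma.

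The main obstacle, as in the sketch, is the handling of $\xi$ near the peak of $D_r$: lemma \ref{lem:j} degenerates when $\xi$ is inside or immediately adjacent to the support of some $\Delta_i$, and the cancellation argument behind lemma \ref{lem:Holder replacement} similarly breaks down at the scale of a single $U_i$. This forces us to excise the set $B_\xi$, which consists of the $\phi_I^{-1}(U_i)$ containing $\xi$ together with its two cyclic neighbours; the removal is harmless when $2^u\le1/\delta$ (there is nothing to excise at that scale), and is precisely the role played by $E_\xi$ in the statement. The remaining technical point is the verification that the count $|\{j:l(j)=l,b(j)=b\}|$ is polynomial in $b$, which follows from the irregular but nevertheless H\"older-controlled partition $\mathcal{U}$ via \eqref{eq:psi-1 45 54}; we therefore apply lemma \ref{lem:renormalisation} with an explicit (albeit large) $\gamma$, and the absolute constants it produces only depend on $\eta$.
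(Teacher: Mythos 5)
Your proposal is essentially the paper's own proof: linearise the choice of $J$ into signs $\epsilon_i$ via (\ref{eq:linearises}), derive local estimates for $\int\Delta_i D_r$ from lemma~\ref{lem:j} and lemma~\ref{lem:Holder replacement} through the locality formula (\ref{eq:psi random LI}) and an integration by parts, feed the resulting vectors into the renormalisation lemma, and excise $B_\xi$ exactly when $2^u>1/\delta$ because the local estimates degenerate there. All the structural ideas the paper uses are present.

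The one real deviation is the centring of $f$. You subtract $f(\phi_I(\xi_i))$ for a fixed reference point $\xi_i$ in each block, which gives the \emph{uniform} bound $\|F_i^\pm\|_\infty\le\omega_f(C\delta^{4/5})$ and justifies your remark that $\omega_f(\delta^c)^c e^{-cm}$ ``multiplies all the bounds uniformly'' and can be pulled out before the renormalisation step. The paper subtracts $h(\xi)=\mathbb{E}(f(\phi_I(\xi)))$ instead, which only yields $\|F^\pm\|_\infty\le H$ with $H=\omega_f(C_1(\dist(\xi,V_i)+\delta)^{4/5})$ --- a quantity that grows with $\dist(\xi,V_i)$ and so is \emph{not} uniform --- forcing the more delicate interlacing of $H$ with $\min\{1/(|l(j)-i\bmod n|+1),1/\widetilde{b}(j)\}$ carried out around (\ref{eq:def lambda_l}) and the definition $b(j)=\max\{\widetilde{b}(j),\lfloor\omega_f(\delta^{4/5})^{-1/5}\rfloor\}$. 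Your centring genuinely simplifies the present lemma, but the paper's choice is not gratuitous: lemma~\ref{lem:rs large} is reused in the proof of theorem~\ref{thm:main} for the \emph{sum} $F^++F^-$, and there the subtracted constant must be $h(\xi)$ for the identity $\tfrac12(F^++F^-)=\mathbb{E}(f(\phi_{I_{k+1}}(x)))-\mathbb{E}(f(\phi_{I_{k+1}}(\xi)))$ used in (\ref{eq:change EkEk+1}) to hold, so your centring would require a separate argument at that point.

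Two small imprecisions, neither of which damages the outline: the $1/\dist(\xi,V_i)$ decay does not come from lemma~\ref{lem:j} itself (which is uniform in $n$) but from the integration by parts that converts $\sum_{l=r}^{s-1}e(l(x-\xi))$ into boundary terms divided by $1-e(x-\xi)$; and the bound you state in the second paragraph should read $\delta\min\{1/\dist(\xi,V_i),\,s-r\}$ as in lemma~\ref{lem:j large}, with the prefactor $\delta$ coming from $|V_i|\le\delta$ rather than a $1/\delta$ inside the minimum.
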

Here and below the constants $C$ and $c$ are allowed to depend on
$\eta$. $D_{r}$ is the usual Dirichlet kernel.

\begin{figure}[b]
\input{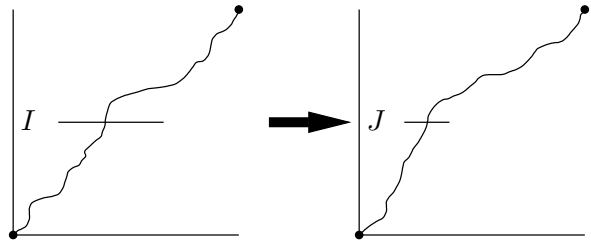}

\caption{Lemma \ref{lem:main} demonstrated for the simplest partition, $\mathcal{U}=\{[0,1]\}$.
The $I$ and $J$ in the picture are $I(\frac{1}{2})$ and $J(\frac{1}{2})$.}
\end{figure}

\begin{proof}
For any $\xi\in[0,1]$ denote $h(\xi)\coloneqq\mathbb{E}(f(\phi_{I}(\xi)))$.
Denote $V_{i}=\phi_{I}^{-1}(U_{i})$ for brevity. Examine one $U_{i}$
in our partition, and recall that $V_{i}$ is not random. The requirements
from $J$ leave relatively little freedom for $J$ inside $U_{i}$.
If $|V_{i}|\le\frac{1}{2}\delta$ then there are no options, $J(d)$
must be $[-1,1]$ for every $d\in U_{i}^{\circ}$. If $|V_{i}|>\frac{1}{2}\delta$
then there are two possibilities. For $d$ in the centre of $U_{i}$
$J(d)$ can be either the left or right half of $I(d)$. For other
$d\in U_{i}^{\circ}$ $J(d)$ must be $[-1,1]$. Thus choosing $J$
is equivalent to choosing a sequence of signs $\varepsilon_{i}\in\{\pm1\}$,
one for each $i$ for which $|V_{i}|>\frac{1}{2}\delta$ (say that
$\varepsilon_{i}=1$ means that we take the right half of $I(d)$
and $\varepsilon_{i}=-1$ the left, and denote for brevity $Y=\{i:|V_{i}|>\frac{1}{2}\delta\}$).
Denote the $J$ that corresponds to a vector $\varepsilon\in\{\pm1\}^{Y}$
by $J^{\varepsilon}$.

Fix one $i\in Y$, one $\xi\in2^{-u-2}\mathbb{Z}\cap[0,1)$ and let
$\varepsilon^{+}$ and $\varepsilon^{-}$ be two vectors with $\varepsilon_{i}^{+}=1$
and $\varepsilon_{i}^{-}=-1$ and define
\begin{gather*}
\Delta_{i}(x)\coloneqq\tfrac{1}{2}(F^{+}(x)-F^{-}(x))\\
F^{\pm}(x)\coloneqq\begin{cases}
\mathbb{E}(f(\phi_{J^{\varepsilon^{\pm}}}(x)))-h(\xi) & x\in V_{i}\\
0 & \text{otherwise.}
\end{cases}
\end{gather*}
(this is a good definition since the values of $\varepsilon$ different
from $\varepsilon_{i}$ have no effect on $\phi|_{V_{i}}$. Hence
they effect neither $F^{\pm}$ nor $\Delta$). In other words, if
we take $J(d)$ to be the left half of $I(d)$, $d$ the centre of
$U_{i}$, then $\mathbb{E}(f\circ\phi_{J})=\mathbb{E}(f\circ\phi_{I})-\Delta$
on $V_{i}$ and otherwise it is $\mathbb{E}(f\circ\phi_{I})+\Delta$.
Summing over $i\in Y$ gives
\begin{equation}
\mathbb{E}(f\circ\phi_{J^{\varepsilon}})=\mathbb{E}(f\circ\phi_{I})+\sum_{i\in Y}\varepsilon_{i}\Delta_{i}.\label{eq:linearises}
\end{equation}
This relation is important because it `linearises' the problem of
choosing a homeomorphism. Our strategy will be to find estimates for
 $\int\Delta_{i}D_{r}(\cdot-\xi)$ for various $i,$ $r$ and $\xi$,
and then apply lemma \ref{lem:renormalisation}. These estimates will
occupy the next three lemmas.  

The reason for subtracting $h(\xi)$ in the definition of $F^{\pm}$
(which, of course, has no affect on $\Delta$) is to get the $\omega(\delta^{c})^{c}$
factor in the statement of lemma \ref{lem:main}. Precisely, we claim
that 
\begin{equation}
||F^{\pm}||_{\infty}\le\omega_{f}(C_{1}(\dist(\xi,V_{i})+\delta)^{4/5})).\label{eq:H justification}
\end{equation}
To see (\ref{eq:H justification}), first note that if $x\in V_{i}$
then $|x-\xi|\le\dist(\xi,V_{i})+\delta$. Let $x'$ be a point in
the boundary of some $\phi_{I}^{-1}(U)$, $U\in\mathcal{U}$, closest
to $x$ among such points, and let $\xi'$ be the point closest to
$\xi$, again from among the points in the boundary of $\phi_{I}^{-1}(U)$,
$U\in\mathcal{U}$. Then the $\delta$-uniformity of $\phi_{I}^{-1}(\mathcal{U})$
says that $|x-x'|\le\frac{1}{2}\delta$ and $|\xi-\xi'|\le\frac{1}{2}\delta$.
We now use the deterministic H\"older condition (\ref{eq:psi-1 45 54})
and get 
\begin{gather*}
|\phi_{J^{\varepsilon^{\pm}}}(x)-\phi_{J^{\varepsilon^{\pm}}}(x')|\le C\delta^{4/5}\qquad|\phi_{I}(\xi)-\phi_{I}(\xi')|\le C\delta^{4/5}\\
|\phi_{J^{\varepsilon^{\pm}}}(x')-\phi_{I}(\xi')|\le C(\dist(\xi,V_{i})+2\delta)^{4/5}
\end{gather*}
where the third inequality follows because $x'$ and $\xi'$ are both
on points where $\phi_{J^{\varepsilon^{\pm}}}=\phi_{I}$. This shows
(still deterministically) that $\phi_{J^{\varepsilon^{\pm}}}(x)-\phi_{I}(\xi)\le C(\dist(\xi,V_{i})+\delta)^{4/5}$
and hence
\begin{equation}
|f(\phi_{J^{\varepsilon^{\pm}}}(x))-f(\phi_{I}(\xi))|\le\omega_{f}(C_{1}(\dist(\xi,V_{i})+\delta)^{4/5})).\label{eq:H before integration}
\end{equation}
Taking expectations shows (\ref{eq:H justification}). It is also
occasionally useful to integrate only the right term, which gives
\begin{equation}
|f(\phi_{J^{\varepsilon^{\pm}}}(x))-h(\xi)|\le\omega_{f}(C_{1}(\dist(\xi,V_{i})+\delta)^{4/5}))\qquad\forall x\in V_{i}.\label{eq:H before integration 2}
\end{equation}
This will be used below.

In light of (\ref{eq:H justification}) let us define 
\begin{equation}
H\coloneqq\omega_{f}(C_{1}(\dist(\xi,V_{i})+\delta)^{4/5}))\label{eq:def H}
\end{equation}
so that $||F^{\pm}||_{\infty}\le H$. Of course $H$ depends on $\xi$,
$i$ and other parameters, but we suppress this in the notation. Both
$h(\xi)$ and $H$ will not play a very important role until the very
end of the proof, at lemma \ref{lem:spectral dyadic}.

Unlike lemmas \ref{lem:j large} and \ref{lem:local estimate} below,
which are only used as sublemmas of lemma \ref{lem:main}, the next
lemma actually has one application after lemma \ref{lem:main} is
finished. For that application, $i$ is not necessarily in $Y$. Hence
keep in mind, when reading this lemma, that $i$ is arbitrary.
\begin{lem}
\label{lem:rs large}With the definitions above, for every $r<s$
and every $\xi$ we have\begin{align}
\bigg|\int_{V_i}F^{\pm}(x)\sum_{l=r}^{s-1}e(l(x-\xi))\,dx\bigg|&\le \min\Big\{\frac{C}{\dist(\xi,V_i)},(s-r)\Big\}\delta H.\nonumber
\intertext{More importantly, }
\bigg|\int_{V_i}F^{\pm}(x)\sum_{l=r}^{s-1}e(l(x-\xi))\,dx\bigg|&\le\left((\delta|r|)^{-\frac{1}{22}}+(\delta|s|)^{-\frac{1}{22}}\right)\frac{C\delta\sqrt{H}}{\dist(\xi,V_i)}.\label{eq:8vakhetzi}
\end{align}
\end{lem}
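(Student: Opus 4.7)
The first bound is immediate: combine $\|F^{\pm}\|_\infty \le H$ (from \eqref{eq:H justification}), the fact that $F^{\pm}$ is supported on $V_i$ of length $\le \delta$, and the elementary estimates $|\sum_{l=r}^{s-1} e(l(x-\xi))|\le s-r$ and $|\sum_{l=r}^{s-1} e(l(x-\xi))|\le 1/|\sin\pi(x-\xi)|\le C/\dist(\xi,V_i)$ valid for $x\in V_i$.

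For \eqref{eq:8vakhetzi} the idea is to interpolate this first bound against the following $H$-free refinement:
\begin{equation*}
\bigg|\int_{V_i} F^{\pm}(x)\sum_{l=r}^{s-1} e(l(x-\xi))\,dx\bigg|\le \frac{C\delta}{\dist(\xi,V_i)}\bigl((\delta|r|)^{-1/11}+(\delta|s|)^{-1/11}\bigr). \tag{$\ast$}
\end{equation*}
Once $(\ast)$ is available, $\min\{A,B\}\le\sqrt{AB}$ combined with $\sqrt{a+b}\le\sqrt a+\sqrt b$ converts the two bounds exactly into \eqref{eq:8vakhetzi}, producing the factor $\sqrt{H}\,((\delta|r|)^{-1/22}+(\delta|s|)^{-1/22})$.

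To prove $(\ast)$ I use the closed form
\[
\sum_{l=r}^{s-1}e(l(x-\xi))=\frac{e(s(x-\xi))-e(r(x-\xi))}{e(x-\xi)-1}
\]
and write $K(x)\coloneqq 1/(e(x-\xi)-1)$. On $V_i$ one has $|K|\le C/\dist(\xi,V_i)$, and a direct computation yields the sharper integrated bound $\int_{V_i}|K'|\le C|V_i|/(\dist(\dist+|V_i|))$; crucially, $|K|$ and $\int_{V_i}|K'|$ together are bounded by $C/\dist(\xi,V_i)$ for every $\xi\notin V_i$, so no $(1+\delta/\dist)$ loss is incurred even when $\dist<|V_i|$. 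Rescaling $V_i$ to $[0,1]$ by $y=L_{V_i}^{-1}(x)$, each of the two pieces coming from $e(s(x-\xi))$ and $e(r(x-\xi))$ reduces to estimating
\[
|V_i|\,\bigg|\int_0^1 \tilde F^{\pm}(y)\, K(L_{V_i}(y))\, e(l|V_i|\,y)\,dy\bigg|, \qquad l\in\{r,s\},
\]
where $\tilde F^{\pm}(y)=F^{\pm}(L_{V_i}(y))=\mathbb{E}(\tilde f(\tilde\psi^{\pm}(y)))-h(\xi)$ with $\tilde f=f\circ L_{U_i}$ and $\tilde\psi^{\pm}$ the corresponding random homeomorphism, having $\tilde\tau(\tfrac12)$ uniform on one half of the rescaled $I(\tfrac12)$. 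Integration by parts with antiderivative $w(y)=\int_0^y\tilde F^{\pm}(t)e(l|V_i|t)\,dt$ and the refined bound on $\|K'\|_{L^1}$ give
\[
\bigg|\int_0^1\tilde F^{\pm}\,K\,e(l|V_i|y)\,dy\bigg|\le \|w\|_\infty\bigl(|K(1)|+\|K'\|_{L^1}\bigr)\le \frac{C\|w\|_\infty}{\dist(\xi,V_i)}.
\]
Multiplying by $|V_i|\le\delta$ and summing the $l=r,s$ contributions gives $(\ast)$ once $\|w\|_\infty\le C(l|V_i|)^{-1/11}$ is shown.

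The main obstacle is this $w$-bound, because lemma \ref{lem:Holder replacement} assumes $\tau$ uniform on $[-1,1]$, while in our setting $\tilde\tau(\tfrac12)$ is confined to a strict sub-interval of $[-1,1]$. I bypass this exactly as in the large-$n$ case of the proof of lemma \ref{lem:j}: condition on $\tilde\tau(\tfrac12)=y_0$, use the recursive formula \eqref{eq:psi random pm} to express $\tilde\psi^{\pm}|_{[0,A']}$ and $\tilde\psi^{\pm}|_{[A',1]}$ (with $A'=\tfrac12+\eta q_{\tilde f}(\tfrac12)y_0$) as affinely rescaled copies of $\psi_{\tilde f^{-},\tilde\tau^{-},\infty}$ and $\psi_{\tilde f^{+},\tilde\tau^{+},\infty}$ with \emph{unrestricted} $\tilde\tau^{\pm}$, apply lemma \ref{lem:Holder replacement} on each half with the scaled frequencies $l|V_i|A'$ and $l|V_i|(1-A')$ (both comparable to $l|V_i|$ since $A'\in[\tfrac14,\tfrac34]$), and then average over $y_0$ in the restriction interval. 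The constant $-h(\xi)$ contributes only $\|f\|_\infty\cdot\min\{1,1/(l|V_i|)\}$, absorbed in the same order $C(l|V_i|)^{-1/11}$. This completes the proof sketch.
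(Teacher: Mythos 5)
Your proof plan is correct, and it follows essentially the same path as the paper's proof: closed form for the Dirichlet-type sum, integration by parts against $K(x)=1/(e(x-\xi)-1)$, rescaling $V_i$ to $[0,1]$, conditioning on $\tau$ at the centre of $U_i$ so that lemma \ref{lem:Holder replacement} applies to unrestricted rescaled copies of $\psi$, and then a $\min\{a,b\}\le\sqrt{ab}$ interpolation to inject the factor $\sqrt{H}$. Where you differ is mostly in the order of operations, and these are genuine but minor rearrangements. The paper conditions on $\phi_{J^{\varepsilon}}^{-1}(d)$ first and then integrates by parts separately over the two (random) halves $V_i^{\pm}$, applying the $\min\le\sqrt{}$ step to the inner antiderivative bound ($\min\{(r\delta)^{-1/11},H\}\le (r\delta)^{-1/22}\sqrt H$) before feeding it into the $K'$-integral. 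You instead integrate by parts over all of $V_i$ at once, prove the clean $H$-free estimate $(\ast)$ where the antiderivative bound comes solely from lemma \ref{lem:Holder replacement} (the conditioning and splitting at $A'$ happening inside the $\|w\|_\infty$ estimate), and only at the very end interpolate $(\ast)$ against the first clause via $\min\le\sqrt{}$ and $\sqrt{a+b}\le\sqrt a+\sqrt b$. This has the small advantage of automatically covering the regime $|r|\le 1/\delta$ or $|s|\le 1/\delta$, which the paper handles by a separate remark that the first clause is already better there. Your more careful observation that $\int_{V_i}|K'|\le C|V_i|/(\dist(\dist+|V_i|))\le C/\dist$ is nice, though not strictly necessary — the paper reaches the same conclusion by using the exact primitive $\int_0^1|x-\widetilde\xi|^{-2}dx \le 1/|\widetilde\xi|+1/|1-\widetilde\xi|$ and the lower bound $|\widetilde\xi|,|1-\widetilde\xi|\ge\dist/\delta$, so no $\delta/\dist$ loss occurs there either. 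Finally, your explicit accounting of the $-h(\xi)$ term (estimating its contribution to $w$ by $\|f\|_\infty\min\{1,1/(l|V_i|)\}$) replaces the paper's WLOG normalisation $h(\xi)=0$; both are fine since subtracting a constant from $f$ does not change $q_f$ or $\psi_{f,\tau,\infty}$.
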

Recall that dist is considered cyclically.
\begin{proof}
The first clause of the lemma is simple. Indeed, on the one hand
\[
\Big|\sum_{l=r}^{s-1}e(l(x-\xi))\Big|\le s-r
\]
and on the other hand
\[
\Big|\sum_{l=r}^{s-1}e(l(x-\xi))\Big|=\Big|\frac{e(s(x-\xi))-e(r(x-\xi))}{1-e(x-\xi)}\Big|\le\frac{C}{\dist(\xi,x)}
\]
so
\[
\Big|\int_{V_{i}}F^{\pm}(x)\sum_{l=r}^{s-1}e(l(x-\xi))\Big|\le\min\Big\{\frac{C}{\dist(\xi,V_{i})},(s-r)\Big\}\delta H
\]
(recall that $|V_{i}|\le\delta$ and that $||F^{\pm}||_{\infty}\le H$),
establishing the lemma in this case. For the second clause we may
assume that $|r|$ and $|s|$ are both bigger than $1/\delta$, as
in the other case the first clause gives a better estimate. We will
also assume that $\xi\not\in V_{i}$ as otherwise (\ref{eq:8vakhetzi})
is vacuous. Finally, we may assume that $h(\xi)=0$, as subtracting
a constant from $f$ changes neither $\psi_{f,\tau,\infty}$ nor $\phi_{I}$,
and the only loss in generality is that now we may only assume $||f||_{\infty}\le1$
(rather than $||f||_{\infty}\le\frac{1}{2}$, which is one of the
implicit assumptions of the lemma).

We will work in each half of $U_{i}$ separately. Denote therefore
by $U_{i}^{\pm}$ these two halves, and denote $V_{i}^{\pm}\coloneqq\phi_{J^{\varepsilon}}^{-1}(U_{i}^{\pm})$
(no relation to the $\pm$ in $F^{\pm}$, which plays no role here).
For concreteness let us work on $U_{i}^{-}$, the case of $U_{i}^{+}$
is literally identical.  By the locality of $\psi$ (\ref{eq:psi random LI})
$\phi_{J^{\varepsilon}}$ restricted to $V_{i}^{-}$ is a linearly
mapped copy of $\psi$, precisely
\[
\phi_{J^{\varepsilon}}(x)=L_{U_{i}^{-}}(\psi_{f\circ L_{U_{i}^{-}},\tau_{J^{\varepsilon}}\circ L_{U_{i}^{-}},\infty}(L_{V_{i}^{-}}^{-1}(x)))\qquad\forall x\in V_{i}^{-}
\]
Denote $\widetilde{f}\coloneqq f\circ L_{U_{i}^{-}}$. Condition on
$\phi_{J^{\varepsilon}}^{-1}(d)$, where $d$ is the centre of $U_{i}$.
Under the conditioning $\tau_{J^{\varepsilon}}\circ L_{U_{i}^{-}}$
is our standard $\tau$ i.e.\ uniform on $[-1,1]$ for all $d$.
Hence
\begin{align}
X & \!\coloneqq\int_{V_{i}^{-}}\mathbb{E}(f(\phi_{J^{\varepsilon}}(x))\,|\,\phi_{J^{\varepsilon}}(d))\frac{e(r(x-\xi))}{1-e(x-\xi)}\,dx\label{eq:defX}\\
 & \stackrel{\mathclap{\textrm{(\ref{eq:psi random LI})}}}{=}\int_{V_{i}^{-}}\mathbb{E}(\widetilde{f}(\psi_{\widetilde{f},\tau,\infty}(L_{V_{i}^{-}}^{-1}(x))))\frac{e(r(x-\xi))}{1-e(x-\xi)}dx\nonumber \\
 & =\int_{0}^{1}\mathbb{E}(\widetilde{f}(\psi_{\widetilde{f},\tau,\infty}(x)))\frac{e(r(L_{V_{i}^{-}}(x)-\xi))}{1-e(L_{V_{i}^{-}}(x)-\xi)}|V_{i}^{-}|\,dx\nonumber 
\end{align}
where the last equality is a (linear) change of variables. Let $\xi^{*}$
be one of $\xi$, $\xi-1$ or $\xi+1$, whichever is closest to $V_{i}^{-}$,
and note that $\dist(\xi,V_{i})=\inf\{|x-\xi^{*}|:x\in V_{i}\}$.
Let $\widetilde{\xi}=L_{V_{i}^{-}}^{-1}(\xi^{*})$ (note that we are
extending $L_{V_{i}^{-}}^{-1}$ to the whole of $\mathbb{R}$ here
--- originally we defined it only on $V_{i}^{-}$, but it is just
an affine function and we extend it to an affine function on $\mathbb{R}$)
and $\widetilde{r}=r|V_{i}^{-}|$ and with these notations 
\[
X=e(-\widetilde{r}\widetilde{\xi})|V_{i}^{-}|\int_{0}^{1}\mathbb{E}(\widetilde{f}(\psi_{\widetilde{f},\tau,\infty}(x)))\frac{e(\widetilde{r}x)}{1-e(|V_{i}^{-}|(x-\widetilde{\xi}))}\,dx.
\]
Integrate by parts and get
\begin{align*}
X & =e(-\widetilde{r}\widetilde{\xi})|V_{i}^{-}|\bigg(-\int_{0}^{1}\int_{0}^{x}\mathbb{E}(\widetilde{f}(\psi_{\widetilde{f},\tau,\infty}(y)))e(\widetilde{r}y)\,dy\\
 & \quad\cdot\frac{e(|V_{i}^{-}|(x-\widetilde{\xi}))\cdot2\pi\sqrt{-1}|V_{i}^{-}|}{(1-e(|V_{i}^{-}|(x-\widetilde{\xi})))^{2}}\,dx\\
 & +\int_{0}^{1}\mathbb{E}(\widetilde{f}(\psi_{\widetilde{f},\tau,\infty}(y)))e(\widetilde{r}y)\,dy\frac{1}{1-e(|V_{i}^{-}|(1-\widetilde{\xi}))}\bigg).
\end{align*}
We now estimate the integrals over $y$ by lemma \ref{lem:Holder replacement}.
We get
\[
\bigg|\int_{0}^{x}\mathbb{E}(\widetilde{f}(\psi_{\widetilde{f},\tau,\infty}(y)))e(\widetilde{r}y)\,dy\bigg|\le C\widetilde{r}^{-1/11}=C(r|V_{i}^{-}|)^{-1/11}\le C(r\delta)^{-1/11}.
\]
On the other hand we have the trivial bound
\[
\bigg|\int_{0}^{x}\mathbb{E}(\widetilde{f}(\psi_{\widetilde{f},\tau,\infty}(y)))e(\widetilde{r}y)\,dy\bigg|\le||\widetilde{f}||_{\infty}\stackrel{\textrm{(\ref{eq:H before integration 2})}}{\le}H.
\]
Since $\min\{a,b\}\le\sqrt{ab}$ for any two positive numbers $a$
and $b$ we can combine these two estimates to get 
\[
\bigg|\int_{0}^{x}\mathbb{E}(\widetilde{f}(\psi_{\widetilde{f},\tau,\infty}(y)))e(\widetilde{r}y)\,dy\bigg|\le C(r\delta)^{-1/22}\sqrt{H}.
\]
We estimate $|V_{i}^{-}|\le\delta$, $|1-e(\theta)|\ge c\theta$ and
get overall 
\begin{align*}
|X| & \le C\delta\bigg(\int_{0}^{1}(r\delta)^{-1/22}\sqrt{H}\frac{\delta}{\delta^{2}|x-\widetilde{\xi}|^{2}}\,dx+(r\delta)^{-1/22}\sqrt{H}\frac{1}{\delta|1-\widetilde{\xi}|}\bigg)\\
 & \le C\delta(r\delta)^{-1/22}\sqrt{H}\left(\frac{1}{\delta|1-\widetilde{\xi}|}+\frac{1}{\delta|\widetilde{\xi}|}+\frac{1}{\delta|1-\widetilde{\xi}|}\right)
\end{align*}
(note that we used here our assumption $\xi\not\in V_{i}$ which gives
$\widetilde{\xi}\not\in[0,1]$). For every $x\in V_{i}^{-}$, $|x-\xi^{*}|\ge\dist(\xi,V_{i})$
and hence
\[
|x-\widetilde{\xi}|\ge\frac{\dist(\xi,V_{i})}{|V_{i}^{-}|}\ge\frac{\dist(\xi,V_{i})}{\delta}\qquad\forall x\in[0,1].
\]
Hence
\[
|X|\le\frac{C\delta(r\delta)^{-1/22}\sqrt{H}}{\dist(\xi,V_{i})}.
\]
Recalling the definition of $X$ (\ref{eq:defX}) gives
\[
\bigg|\int_{V_{i}^{-}}\mathbb{E}(f(\phi_{J^{\varepsilon}}(x))\,|\,\phi_{J^{\varepsilon}}(d))\cdot\frac{e(r(x-\xi))}{1-e(x-\xi)}\bigg|\le\frac{C\delta(r\delta)^{-1/22}\sqrt{H}}{\dist(\xi,V_{i})}.
\]
This terminates our estimate for $V_{i}^{-}$. The estimate for $V_{i}^{+}$,
as already mentioned, is literally identical, and summing them we
get
\[
\bigg|\int_{V_{i}}\mathbb{E}(f(\phi_{J^{\varepsilon}}(x))\,|\,\phi_{J^{\varepsilon}}(d))\cdot\frac{e(r(x-\xi))}{1-e(x-\xi)}\bigg|\le\frac{C\delta(r\delta)^{-1/22}\sqrt{H}}{\dist(\xi,V_{i})}.
\]
Integrating over $\phi_{J^{\varepsilon}}(d)$ gives
\[
\bigg|\int_{V_{i}}\mathbb{E}(f(\phi_{J^{\varepsilon}}(x)))\cdot\frac{e(r(x-\xi))}{1-e(x-\xi)}\bigg|\le\frac{C\delta(r\delta)^{-1/22}\sqrt{H}}{\dist(\xi,V_{i})}.
\]
Adding these estimates for $r$ and $s$ gives the result.
\end{proof}
\begin{lem}
\label{lem:j large}With the definitions above lemma \ref{lem:rs large},
for every $r<s$ and every $\xi$ we have
\[
\Big|\int\Delta_{i}(x)\sum_{l=r}^{s-1}e(l(x-\xi))\Big|\le C\min\Big\{\frac{\delta}{\dist(\xi,V_{i})},\delta(s-r)\Big\}2^{-m/22}.
\]
\end{lem}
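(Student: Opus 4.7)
The plan is to mirror the proof of lemma \ref{lem:rs large} almost verbatim, with lemma \ref{lem:j} replacing lemma \ref{lem:Holder replacement}. The key observation I will use is that, unlike in lemma \ref{lem:rs large}, the \emph{difference} structure of $\Delta_i$ already matches the difference structure of lemma \ref{lem:j}, so no conditioning on the (random) value of $\phi$ at the centre $d$ of $U_i$ is needed---which would in any case be illegitimate here, since the distribution of $\phi(d)$ depends on the sign $\varepsilon_i$.

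First I would use locality (\ref{eq:psi random LI}) together with the fact that the only dyadic $d' \in U_i^{\circ}$ with non-trivial $I(d')$ is the centre $d$, to write, for $x \in V_i$,
\[
\mathbb{E}\bigl(f(\phi_{J^{\varepsilon^\pm}}(x))\bigr) = \mathbb{E}\bigl(\widetilde f(\psi_{\widetilde f, \tau, \infty}(L_{V_i}^{-1}(x))) \,\big|\, \tau(\tfrac12) \in J^{\varepsilon^\pm}(d)\bigr),
\]
with $\widetilde f = f \circ L_{U_i}$, where $J^{\varepsilon^+}(d)$ and $J^{\varepsilon^-}(d)$ are the two halves of $I(d)$---adjacent dyadic intervals of length $2^{-m-1}$ inside $[-1,1]$ at maximum distance $2^{-m}$. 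Applying lemma \ref{lem:j} with $\varepsilon_{\textrm{lemma \ref{lem:j}}}=2^{-m}$ will then give, for every $[r',s'] \subseteq [0,1]$ and every $n \in \mathbb{R}$,
\[
\Bigl|\int_{r'}^{s'} \bigl(\widetilde F_1(y) - \widetilde F_2(y)\bigr) e(ny)\,dy\Bigr| \le C \cdot 2^{-m/22},
\]
where $\widetilde F_i \coloneqq \mathbb{E}(\widetilde f \circ \psi_{\widetilde f,\tau,\infty} \,|\, \tau(\tfrac12) \in J^{\varepsilon^{\pm}}(d))$. After the change of variables $y = L_{V_i}^{-1}(x)$, the integral I want becomes $\tfrac12|V_i|$ times $\int_0^1 (\widetilde F_1 - \widetilde F_2)(y)\,\widetilde Q(y)\,dy$, where $\widetilde Q(y) = \sum_{l=r}^{s-1} c_l\,e(l|V_i|y)$ is the rescaled exponential sum with $|c_l|=1$.

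The bound $C\delta(s-r)2^{-m/22}$ will then follow immediately by applying the displayed estimate to each of the $s-r$ frequencies in $\widetilde Q$ and summing. For the bound $C\delta/\dist(\xi,V_i)\cdot 2^{-m/22}$ I assume $\xi \notin V_i$, rewrite
\[
\widetilde Q(y) = \frac{e(s\alpha)\,e(s|V_i|y) - e(r\alpha)\,e(r|V_i|y)}{e(\alpha)\,e(|V_i|y) - 1}, \qquad \alpha = a - \xi,
\]
with $a$ the left endpoint of $V_i$, and perform on each of the two numerator summands the \emph{exact} integration-by-parts of lemma \ref{lem:rs large}: differentiate the smooth factor $K(y) = 1/(e(\alpha)e(|V_i|y) - 1)$ (bounded by $C/\dist(\xi,V_i)$, with $\int|K'|\le C|V_i|/\dist(\xi,V_i)^2$) and apply the uniform $C\cdot 2^{-m/22}$ bound to the antiderivatives of the oscillatory factors against $\widetilde F_1 - \widetilde F_2$. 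The bookkeeping is identical to that on page \pageref{lem:rs large}, with $2^{-m/22}$ here playing the role previously played by $(r\delta)^{-1/22}\sqrt{H}$ there; the $1/\dist$ (rather than $1/\dist^2$) dependence comes out of the same cancellation between the boundary term and the $\int|K'|$ term as in lemma \ref{lem:rs large}. The main obstacle is simply verifying that lemma \ref{lem:j} genuinely applies to the rescaled process, which follows at once from the fact that a type-$(f,\delta)$ RH-restrictor is trivial on $U_i^\circ$ away from its centre; beyond that, no new ideas are needed.
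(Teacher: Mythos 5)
Your proposal is correct and follows essentially the same route as the paper's own proof: reduce to $[0,1]$ via locality (\ref{eq:psi random LI}), recognise the resulting difference of conditional expectations as exactly the input to lemma \ref{lem:j} with $\varepsilon=2^{-m}$, then obtain the $\delta(s-r)$ bound by summing over frequencies and the $\delta/\dist(\xi,V_i)$ bound by the same integration by parts used in lemma \ref{lem:rs large}. Your remark that conditioning on $\phi(d)$ is both unnecessary and unavailable here (since its law depends on $\varepsilon_i$) is a useful clarification the paper leaves implicit, though the phrase ``cancellation between the boundary term and the $\int|K'|$ term'' is a small misdescription --- each term separately contributes $O(1/\dist)$ because $\widetilde\xi$ lies outside $[0,1]$, no cancellation between them is used.
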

\begin{proof}
Essentially, the lemma follows from lemma \ref{lem:j} using the
same integration by parts that was used to derive the previous lemma
from lemma \ref{lem:Holder replacement}. But let us do it in detail
nonetheless. We start by showing 
\[
\Big|\int\Delta_{i}(x)\sum_{l=r}^{s-1}e(l(x-\xi))\Big|\le\frac{C\delta2^{-m/22}}{\dist(\xi,V_{i})},
\]
which is the more interesting case. Assume $\xi\not\in V_{i}$ as
otherwise the claim is vacuous. Recall that $\Delta_{i}=F^{+}-F^{-}$
and that $F^{\pm}=\mathbb{E}f\circ\phi_{J^{\varepsilon^{\pm}}}-h(\xi)$.
As in the previous lemma we may assume that $h(\xi)=0$ and $||f||_{\infty}\le1$.
We use the locality of $\psi$ (\ref{eq:psi random LI}) to get
\[
\phi_{J^{\varepsilon^{\pm}}}(x)=L_{U_{i}}(\psi_{f\circ L_{U_{i}},\tau_{J^{\varepsilon^{\pm}}}\circ L_{U_{i}},\infty}(L_{V_{i}}^{-1}(x)))\qquad\forall x\in V_{i}.
\]
Denote $\widetilde{f}\coloneqq f\circ L_{U_{i}}$. Note that $\widetilde{\tau}^{\pm}\coloneqq\tau_{J^{\varepsilon^{\pm}}}\circ L_{U_{i}}$
has the following distribution: if $d\ne\frac{1}{2}$ then $\widetilde{\tau}^{\pm}(d)$
is uniform on $[-1,1]$, while $\widetilde{\tau}^{\pm}(\frac{1}{2})$
is uniform on $J^{\varepsilon^{\pm}}(d)$ where $d$ is as usual the
centre of $U_{i}$. Denote
\begin{align*}
X^{\pm} & \coloneqq\int_{V_{i}}F^{\pm}(x)\frac{e(r(x-\xi))}{1-e(x-\xi)}=\int_{V_{i}}\mathbb{E}(\widetilde{f}(\psi_{\widetilde{f},\widetilde{\tau}^{\pm},\infty}(L_{V_{i}}^{-1}(x))))\frac{e(r(x-\xi))}{1-e(x-\xi)}\,dx\\
 & =\int_{0}^{1}\mathbb{E}(\widetilde{f}(\psi_{\widetilde{f},\widetilde{\tau}^{\pm},\infty}(x)))\frac{e(r(L_{V_{i}}(x)-\xi))}{1-e(L_{V_{i}}(x)-\xi)}|V_{i}|\,dx.
\end{align*}
Define $\widetilde{\xi}$ as in the previous lemma with respect to
$V_{i}$ i.e.\ $\widetilde{\xi}=L_{V_{i}}^{-1}(\xi^{*})$ etc., and
$\widetilde{r}=r|V_{i}|$ (also as in the previous lemma) and get
\begin{equation}
X^{\pm}=e(-\widetilde{r}\widetilde{\xi})|V_{i}|\int_{0}^{1}\mathbb{E}(\widetilde{f}(\psi_{\widetilde{f},\widetilde{\tau}^{\pm},\infty}(x)))\frac{e(\widetilde{r}x)}{1-e(|V_{i}|(x-\widetilde{\xi}))}\,dx.\label{eq:locality lemma j}
\end{equation}
Integrate by parts and get
\begin{align*}
X^{\pm} & =e(-\widetilde{r}\widetilde{\xi})|V_{i}|\bigg(-\int_{0}^{1}\int_{0}^{x}\mathbb{E}(\widetilde{f}(\psi_{\widetilde{f},\widetilde{\tau}^{\pm},\infty}(y)))e(\widetilde{r}y)\,dy\\
 & \qquad\qquad\cdot\frac{e(|V_{i}|(x-\xi))\cdot2\pi\sqrt{-1}|V_{i}|}{(1-e(|V_{i}|(x-\widetilde{\xi})))^{2}}\,dx\\
 & \qquad+\int_{0}^{1}\mathbb{E}(\widetilde{f}(\psi_{\widetilde{f},\widetilde{\tau}^{\pm},\infty}(y)))e(\widetilde{r}y)\,dy\frac{1}{1-e(|V_{i}|(1-\widetilde{\xi}))}\bigg).
\end{align*}
This gives us the longest formula in this paper,
\begin{align*}
\lefteqn{\int\Delta_{i}(x)\frac{e(r(x-\xi))}{1-e(x-\xi)}\,dx=\tfrac{1}{2}(X^{+}-X^{-})}\;\\
 & =\frac{e(-\widetilde{r}\widetilde{\xi})|V_{i}|}{2}\bigg(-\int_{0}^{1}\int_{0}^{x}\Big(\mathbb{E}(\widetilde{f}(\psi_{\widetilde{f},\widetilde{\tau}^{+},\infty}(y)))-\mathbb{E}(\widetilde{f}(\psi_{\widetilde{f},\widetilde{\tau}^{-},\infty}(y)))\Big)e(\widetilde{r}y)\,dy\\
 & \qquad\qquad\cdot\frac{e(|V_{i}|(x-\xi))\cdot2\pi\sqrt{-1}|V_{i}|}{(1-e(|V_{i}|(x-\widetilde{\xi})))^{2}}\,dx\\
 & \qquad+\int_{0}^{1}\Big(\mathbb{E}(\widetilde{f}(\psi_{\widetilde{f},\widetilde{\tau}^{+},\infty}(y)))-\mathbb{E}(\widetilde{f}(\psi_{\widetilde{f},\widetilde{\tau}^{-},\infty}(y)))\Big)e(\widetilde{r}y)\,dy\\
 & \qquad\qquad\cdot\frac{1}{1-e(|V_{i}|(1-\widetilde{\xi}))}\bigg).
\end{align*}
We now note that each integral over $y$ above is exactly of the form
given by lemma \ref{lem:j}, with $\varepsilon=2^{-m}$. Hence they
are bounded by $C2^{-m/22}$. Bounding $|V_{i}|\le\delta$ and $|1-e(\theta)|\ge c\theta$
gives
\begin{align*}
\bigg|\int\Delta_{i}(x)\frac{e(r(x-\xi))}{1-e(x-\xi)}\bigg| & \le C\delta\bigg(\int_{0}^{1}2^{-m/22}\frac{\delta}{\delta^{2}|x-\widetilde{\xi}|^{2}}\,dx+2^{-m/22}\frac{1}{\delta|1-\widetilde{\xi}|}\bigg)\\
 & \le C\delta2^{-m/22}\left(\frac{1}{\delta|1-\widetilde{\xi}|}+\frac{1}{\delta|\widetilde{\xi}|}\right).
\end{align*}
Again $|x-\widetilde{\xi}|\ge\dist(\xi,V_{i})/\delta$ for all $x\in[0,1]$,
so
\[
\bigg|\int\Delta_{i}(x)\frac{e(r(x-\xi))}{1-e(x-\xi)}\bigg|\le\frac{C\delta2^{-m/22}}{\dist(\xi,V_{i})}.
\]
Summing the $r$ and $s$ terms gives 
\[
\left|\int\Delta_{i}(x)\sum_{l=r}^{s}e(l(x-\xi))\,dx\right|\le\frac{C\delta2^{-m/22}}{\dist(\xi,V_{i})}.
\]
This is the main estimate of the lemma.

We still need to show the simpler estimate $C\delta(s-r)2^{-m/22}$.
We keep the notations $\widetilde{f}$, $\widetilde{\tau}^{\pm}$
and $\widetilde{\xi}$. For every $l\in\{r,\dots,s-1\}$ we denote
$\widetilde{l}\coloneqq l|V_{i}|$ and then the same locality argument
that gave (\ref{eq:locality lemma j}) gives
\begin{multline*}
\int\Delta_{i}(x)e(l(x-\xi))\,dx\\
=e(-\widetilde{l}\widetilde{\xi})|V_{i}|\int_{0}^{1}\Big(\mathbb{E}(\widetilde{f}(\psi_{\widetilde{f},\widetilde{\tau}^{+},\infty}(x)))-\mathbb{E}(\widetilde{f}(\psi_{\widetilde{f},\widetilde{\tau}^{-},\infty}(x)))\Big)e(\widetilde{l}x)\,dx.
\end{multline*}
We apply lemma \ref{lem:j} directly (without integration by parts)
and get
\[
\left|\int\Delta_{i}(x)e(l(x-\xi))\,dx\right|\le C\delta2^{-m/22}.
\]
We sum over $l$ from $r$ to $s-1$. This gives the second bound,
and proves the lemma. 
\end{proof}
Aggregating the last two lemmas we get
\begin{lem}
\label{lem:local estimate}With the definitions above lemma \ref{lem:rs large},
for every $r<s$ and every $\xi$ we have
\begin{multline*}
\Big|\int\Delta_{i}(x)\sum_{l=r}^{s-1}e(l(x-\xi))\Big|\\
\le C_{1}\delta\min\Big\{\frac{(\min\{\lceil|s|\delta\rceil,\lceil|r|\delta\rceil\})^{-1/44}}{\dist(\xi,V_{i})},(s-r)\Big\}2^{-m/44}H^{1/4}
\end{multline*}
\end{lem}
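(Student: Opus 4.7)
The idea is straightforward: Lemma~\ref{lem:local estimate} asserts essentially that the estimates of the previous two lemmas can be interpolated via the geometric mean. The key observation is that $\Delta_i=\tfrac12(F^+-F^-)$ so any bound on $\int F^{\pm}\sum e(l(\cdot-\xi))$ immediately yields a bound on $\int\Delta_i\sum e(l(\cdot-\xi))$ (twice); thus lemma~\ref{lem:rs large} gives us a second family of bounds for $\Delta_i$, complementing the ones from lemma~\ref{lem:j large}. Because for any two positive numbers $\min\{a,b\}\le\sqrt{ab}$, we can take the geometric mean of the estimates to mix the factors $\sqrt{H}$ and $2^{-m/22}$ into $H^{1/4}\cdot2^{-m/44}$, and the factors $(\delta|r|)^{-1/22}$ into $(\delta|r|)^{-1/44}$.

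In detail, I would first split into two cases according to whether $\min\{|r|,|s|\}\ge1/\delta$ or not. In the first case (``large frequencies'') the bound~\eqref{eq:8vakhetzi} of lemma~\ref{lem:rs large} gives
\[
\bigg|\int\Delta_i\sum_{l=r}^{s-1}e(l(x-\xi))\,dx\bigg|\le\frac{C\delta\sqrt{H}}{\dist(\xi,V_i)}\bigl((\delta|r|)^{-1/22}+(\delta|s|)^{-1/22}\bigr)\le\frac{C\delta\sqrt{H}}{\dist(\xi,V_i)}(\min\{|r|,|s|\}\delta)^{-1/22}.
\]
Lemma~\ref{lem:j large} independently gives $C\delta\cdot2^{-m/22}/\dist(\xi,V_i)$. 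The geometric mean yields exactly
\[
\frac{C\delta H^{1/4}\cdot 2^{-m/44}}{\dist(\xi,V_i)}(\min\{|r|,|s|\}\delta)^{-1/44},
\]
and since $\lceil x\rceil\le 2x$ for $x\ge1$, the ceiling in the statement is absorbed by a constant. In the second case ($\min\{|r|,|s|\}<1/\delta$, so $\min\{\lceil|r|\delta\rceil,\lceil|s|\delta\rceil\}=1$) I would use instead the first clause of lemma~\ref{lem:rs large}, which gives $C\delta H/\dist(\xi,V_i)$, and again take the geometric mean with lemma~\ref{lem:j large} to get $C\delta H^{1/2}\cdot 2^{-m/44}/\dist(\xi,V_i)$. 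Since the standing hypothesis $\|f\|_\infty\le\tfrac12$ forces $H\le 1$ (recall $H=\omega_f(\cdots)\le2\|f\|_\infty\le1$), we may weaken $H^{1/2}$ to $H^{1/4}$, matching the claim.

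For the alternative bound involving $(s-r)$, I would combine the trivial bound $|\int F^{\pm}\sum e|\le\delta(s-r)H$ (from $|V_i|\le\delta$ and $\|F^{\pm}\|_\infty\le H$) with the $(s-r)$ clause of lemma~\ref{lem:j large}, which gives $C\delta(s-r)\cdot 2^{-m/22}$. The geometric mean is $\delta(s-r)H^{1/2}\cdot 2^{-m/44}$, which is again bounded by $C\delta(s-r)H^{1/4}\cdot2^{-m/44}$ using $H\le1$. Taking the minimum of the two resulting bounds gives the form stated in the lemma.

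The main obstacle, if any, is keeping the bookkeeping straight across the case split: making sure the ceiling on $\min\{|r|\delta,|s|\delta\}$ cleanly handles the boundary between the regimes, and verifying that the weakening from $H^{1/2}$ to $H^{1/4}$ (needed only for the uniform formulation) really is harmless. Both are routine given the inequality $\min\{a,b\}\le\sqrt{ab}$ and the bound $H\le1$.
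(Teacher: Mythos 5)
Your proof is correct and follows essentially the same route as the paper's: for each branch of the minimum you pair the $F^{\pm}$-based bound from Lemma~\ref{lem:rs large} (applied to $\Delta_i=\tfrac12(F^+-F^-)$) with the corresponding bound from Lemma~\ref{lem:j large} and use $\min\{a,b\}\le\sqrt{ab}$, splitting on whether $\min\{|r|,|s|\}\gtrless1/\delta$ to decide which clause of Lemma~\ref{lem:rs large} to invoke. The two things you flag --- absorbing the ceiling via $\lceil x\rceil\le 2x$ for $x\ge1$, and weakening $H^{1/2}$ to $H^{1/4}$ via $H\le\omega_f(\cdot)\le2\|f\|_\infty\le1$ --- are exactly the routine adjustments required, and your handling of them is sound.
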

\begin{proof}
Denote $D=\sum_{l=r}^{s-1}e(l(x-\xi)$. Assume first that both $|r|$
and $|s|$ are larger than $\nicefrac{1}{\delta}$. We use lemma
\ref{lem:rs large} for both $F^{+}$ and $F^{-}$ and summing the
result gives
\[
\bigg|\int\Delta_{i}D\bigg|\le C\delta\frac{(\min\{\lceil|s|\delta\rceil,\lceil|r|\delta\rceil\})^{-1/22}}{\dist(\xi,V_{i})}H^{1/2}
\]
while lemma \ref{lem:j large} gives
\[
\bigg|\int\Delta_{i}D\bigg|\le C\frac{2^{-m/22}}{\dist(\xi,V_{i})}.
\]
For any positive numbers $a$ and $b$, $\min\{a,b\}\le\sqrt{ab}$
so
\[
\min\{2^{-m/22},\lceil|r|\delta\rceil^{-1/22}H^{1/2}\}\le2^{-m/44}\lceil|r|\delta\rceil^{-1/44}H^{1/4}
\]
and similarly for $s$. Applying this to whichever of $r$ and $s$
has a smaller absolute value gives
\[
\left|\int\Delta_{i}D\right|\le C\delta\frac{(\min\{\lceil|s|\delta\rceil,\lceil|r|\delta\rceil\})^{-1/44}}{\dist(\xi,V_{i})}2^{-m/44}H^{1/4}
\]
as needed. The case that either $|r|\le\nicefrac{1}{\delta}$ or
$|s|\le\nicefrac{1}{\delta}$ is similar but simpler. Lemma \ref{lem:rs large}
gives the estimate $|\int\Delta D|\le\delta(s-r)H$, lemma \ref{lem:j large}
gives the estimate $|\int\Delta D|\le C\delta(s-r)2^{-m/22}$, and
we combine them as above. The lemma is thus proved.
\end{proof}
The last step in proving lemma \ref{lem:main} is to choose the $\varepsilon_{i}$.
Recall that for each $i\in Y=\{i:|V_{i}|>\frac{1}{2}\delta\}$ we
need to chose an $\varepsilon_{i}\in\{\pm1\}$. It will be convenient
to add dummy variables, so we will choose $\varepsilon_{i}$ for every
$i$, and ignore those outside $Y$. Denote by $N$ the number of
$U_{i}$ in our partition, and $\Delta_{i}\equiv0$ for every $i\not\in Y$.
Denote 
\[
\Delta(x)=\sum_{i=1}^{N}\varepsilon_{i}\Delta_{i}(x).
\]
The $\varepsilon_{i}$ will be chosen using lemma \ref{lem:renormalisation simplified}.
Here are the details.
\begin{lem}
\label{lem:spectral dyadic}There exists an $\varepsilon=(\varepsilon_{1},\varepsilon_{2},\dotsc,\varepsilon_{N})$
such that for every $u\in\mathbb{N}$, every $r\in\{2^{u-1},\dotsc,2^{u}-1\}$
and for every $\xi\in2^{-u-2}\mathbb{Z}\cap[0,1)$,
\[
\bigg|\int_{E_{\xi}}\Delta(x)D_{r}(x-\xi)\,dx\bigg|\le C\min\{\delta2^{u},1/\delta2^{u}\}^{c}2^{-cm}\omega_{f}(\delta^{c})^{c}.
\]
\end{lem}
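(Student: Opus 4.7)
The plan is to reduce this to lemma \ref{lem:renormalisation simplified}, following the same renormalisation scheme sketched for lemma \ref{lem:main Holder} but substituting lemma \ref{lem:local estimate} for lemmas \ref{lem:Fpm Holder}--\ref{lem:int Delta small Holder}. First, for \emph{every} triple $(u',r',\xi')$ with $u'\in\mathbb{N}$, $r'\in\{2^{u'-1},\dotsc,2^{u'}-1\}$ and $\xi'\in2^{-u'-2}\mathbb{Z}\cap[0,1)$, we write $r'$ in its binary expansion $r'=2^{u'-1}+2^{s_1}+\dotsb+2^{s_k}$ with $s_1>s_2>\dotsb$, set $t_i=2^{u'-1}+2^{s_1}+\dotsb+2^{s_i}$, and split
\[
D_{r'}(x-\xi')=D_{2^{u'-1}}(x-\xi')+\sum_i\Big(\sum_{z=t_{i-1}+1}^{t_i}e(z(x-\xi'))+\sum_{z=-t_i}^{-t_{i-1}-1}e(z(x-\xi'))\Big).
\]
Each summand has the exponential-sum form to which lemma \ref{lem:local estimate} applies.

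For every such dyadic piece $P$ (say a sum $\sum_{l=r}^{s-1}e(l(\cdot-\xi'))$, one of many pieces across all $(u',r',\xi')$) and every $i\in\{1,\dotsc,N\}$, define
\[
v_{i,j}\coloneqq\frac{1}{\sigma}\int_{E_{\xi'}}\Delta_i(x)\,P(x)\,dx,
\]
where $\sigma\coloneqq C\delta\,2^{-m/44}\,\omega_f(\delta^{c_1})^{c_2}$ is a normalising constant to be fixed (with $c_1,c_2$ small) and $j$ runs over an enumeration of all pieces. Using the $\delta$-uniformity $|V_i|\in[\delta/4,\delta]$, one has $\dist(\xi',V_i)\ge c\delta(|i-l(j)|+1)$, where $l(j)$ is the index of the $V_i$ closest to $\xi'$; and the exclusion of $B_{\xi'}$ from $E_{\xi'}$ in the regime $2^{u'}>1/\delta$ keeps $\dist(\xi',V_i)\gtrsim\delta$ uniformly, which in turn bounds $H^{1/4}\le\omega_f(C\dist(\xi',V_i)^{4/5})^{1/4}$ by a multiple of $\omega_f(\delta^{c_1})^{c_2}$ in the range where the $H$-factor is actually used. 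Together with lemma \ref{lem:local estimate} this yields
\[
|v_{i,j}|\le\min\Big\{\tfrac{1}{|i-l(j)|+1},\tfrac{1}{b(j)}\Big\},
\]
where $b(j)$ is a suitable positive integer that packages the geometric decay contributed by (i) $|u'-u^{*}|$ for $u^{*}\coloneqq\log_2(1/\delta)$, recorded via the factor $(\min\{\lceil|r|\delta\rceil,\lceil|s|\delta\rceil\})^{-1/44}$; and (ii) the ``depth'' $s$ of the dyadic piece via the naive bound $\delta(s-r)$ paired with $\min\{x,y\}\le\sqrt{xy}$, exactly as in the derivation of the Hölder case.

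The next step is the combinatorial counting: for each fixed $l$ and $b$, the number of $j$'s with $l(j)=l,\,b(j)=b$ is $\le Mb^\gamma$. A direct but tedious enumeration of how $(u',\xi',s,t,\pm)$ contribute to each $(l,b)$ shows that $\gamma$ can be taken to be an absolute constant (the number $91$ mentioned on \pageref{pg:91} is the value that arises in the actual main-theorem proof; for the H\"older-type accounting here one can use the same value). With this in hand, lemma \ref{lem:renormalisation simplified} supplies signs $(\varepsilon_i)_{i=1}^N\in\{\pm1\}^N$ with $|\sum_i\varepsilon_iv_{i,j}|\le Cb(j)^{-1/50}$ for every $j$. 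Reassembling: for the specific $(u,r,\xi)$ appearing in the conclusion, the identity $\int_{E_\xi}\Delta D_r(\cdot-\xi)=\sigma\sum_{\text{pieces }P\text{ of }D_r}\sum_i\varepsilon_iv_{i,j(P)}$ combined with the geometric/exponential decay of $b(j)^{-1/50}$ across the binary expansion of $r$ and across $|u-u^{*}|$ collapses to a bound $C\sigma\cdot\min\{\delta 2^u,1/\delta 2^u\}^c$, which is exactly what is claimed.

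I expect the main obstacle to be the combinatorial count $|\{j:l(j)=l,b(j)=b\}|\le Mb^\gamma$: the vectors are indexed by four intertwined parameters $(\xi',u',s,t)$ together with a choice of $\pm$ and of dyadic piece, and the definition of $b(j)$ couples them in a non-trivial way. The secondary difficulty is to handle the $i$-dependence of $H^{1/4}$ cleanly: one must split $j$ into those for which the $1/\dist$-bound of lemma \ref{lem:local estimate} dominates (so $H^{1/4}\le 1$ is harmless and the $\omega_f$-factor comes from $\dist(\xi',V_i)\gtrsim\delta$) and those for which the coarse bound dominates (where $H$-factors are absorbed into $b(j)$). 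Once these two bookkeeping points are settled, the rest is a direct application of lemma \ref{lem:renormalisation simplified} and the dyadic-summation argument outlined in the proof of lemma \ref{lem:main Holder}.
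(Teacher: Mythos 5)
Your overall plan matches the paper's: decompose $D_r$ by binary expansion, turn the $\Delta_i$-integrals into vectors, feed them into lemma~\ref{lem:renormalisation simplified}, and reassemble. The identity $\int_{E_\xi}\Delta D_r(\cdot-\xi)=\sigma\sum_P\sum_i\varepsilon_i v_{i,j(P)}$ is correct, and placing the $\delta$ and $2^{-m/44}$ factors in $\sigma$ works, because $\dist(\xi',V_i)\gtrsim\delta(|i-l(j)|+1)$ cancels the $\delta$ exactly.

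However, there is a genuine gap in the treatment of the $H^{1/4}$ factor, and putting $\omega_f(\delta^{c_1})^{c_2}$ into the normalizing constant $\sigma$ does not work. From lemma~\ref{lem:local estimate} you get roughly $|v_{i,j}|\lesssim\min\{\tfrac{1}{|l(j)-i|+1},\tfrac{1}{\widetilde b(j)}\}\cdot\dfrac{H^{1/4}}{\omega_f(\delta^{c_1})^{c_2}}$, and for indices with $\dist(\xi',V_i)$ of order one, $H\approx\omega_f(1)\lesssim 1$, so the last quotient blows up by $\omega_f(\delta^{c_1})^{-c_2}$ and the required inequality $|v_{i,j}|\le\min\{\tfrac{1}{|l(j)-i|+1},\tfrac{1}{b(j)}\}$ fails. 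Your proposed repair --- split $j$ according to which half of the $\min$ in lemma~\ref{lem:local estimate} dominates --- is the wrong dichotomy; the split that actually works is whether $(|l(j)-i|+1)\,\omega_f(\delta^{4/5})\gtrless1$. The paper's resolution is to \emph{not} put $\omega_f$ into $\sigma$ at all: instead one verifies the pointwise inequality
\[
\min\Big\{\tfrac{1}{a},\tfrac{1}{\widetilde b}\Big\}\cdot\min\{1,a\,\omega\}^{1/4}\le\min\Big\{\tfrac{1}{a},\min\Big\{\tfrac{1}{\widetilde b},\omega^{1/5}\Big\}\Big\},\qquad a\coloneqq|l(j)-i|+1,\ \omega\coloneqq\omega_f(\delta^{4/5}),
\]
(which uses both cases $a\omega\ge1$, where $a^{-1}\le\omega\le\omega^{1/5}$, and $a\omega<1$, where $(a\omega)^{1/4}<\omega^{1/5}$) and then sets $b(j)\coloneqq\max\{\widetilde b(j),\lfloor\omega^{-1/5}\rfloor\}$. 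This floors $b(j)$ at $\omega^{-1/5}$, so the output bound $C b(j)^{-1/50}$ of lemma~\ref{lem:renormalisation simplified} automatically carries the needed factor $\omega_f(\delta^c)^c$; only $\delta$ and $2^{-m/44}$ should sit in the normalizing constant. The other place you waved your hands --- the count $|\{j:l(j)=l,\,b(j)=b\}|\le Mb^\gamma$ --- is indeed tedious but unproblematic; the modification $b\mapsto\max\{\widetilde b,\lfloor\omega^{-1/5}\rfloor\}$ is easily accommodated by summing the $\widetilde b$-counts over $\widetilde b\le b$, which raises the exponent from $90$ to $91$.
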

\begin{proof}
We need to prepare a sequence of vectors to `feed' into lemma \ref{lem:renormalisation simplified},
which will give us our signs. The estimates of these vectors will
come from lemma \ref{lem:local estimate}. 

We now define our vectors. For every $u\in\mathbb{N}$ let $U$ be
the smallest power of $2$ with $U\ge\max\{2^{u+2},N\}$. For every
$i\in\{1,\dotsc,N\}$ and every $\xi\in\frac{1}{U}\{0,\dotsc,U-1\}$
we define
\[
w_{i,\xi,u}^{0}=\int\Delta_{i}(x)D_{2^{u}}(x-\xi)\,dx
\]
except if $2^{u}>\nicefrac{1}{\delta}$ and $V_{i}\subset B_{\xi}$
($B_{\xi}$ from the statement of lemma \ref{lem:main}), in which
case we define $w_{i,\xi,u}^{0}=0$. Next, for every $u\in\mathbb{N}$,
for every $0\le s<u$ and every $t\in[2^{u-1},2^{u})$ divisible by
$2^{s+1}$, every $i\in\{1,\dotsc,N\}$ and every $\xi\in\frac{1}{U}\{0,\dotsc,U-1\}$
we define
\begin{align*}
w_{i,\xi,u,s,t}^{+} & =\int\Delta_{i}(x)\sum_{z=t+1}^{t+2^{s}}e(z(x-\xi))\,dx,\\
w_{i,\xi,u,s,t}^{-} & =\int\Delta_{i}(x)\sum_{z=-t-2^{s}}^{-t-1}e(z(x-\xi))\,dx,
\end{align*}
again, except if $2^{u}>\nicefrac{1}{\delta}$ and $V_{i}\subset B_{\xi}$,
in which case we define $w_{i,\xi,u,s,t}^{\pm}=0$. This terminates
the list of vectors (we think about them as vectors in $i$) that
we wish to feed into lemma \ref{lem:renormalisation simplified},
after some scaling. 

We note the following convenient fact. For $\xi\in[0,1]$ define $l(\xi)$
by $\xi\in V_{l(\xi)}$. Then
\begin{equation}
\dist(\xi,V_{i})\ge c\delta(|l(\xi)-i\bmod N|+1)\qquad\text{whenever }V_{i}\not\subset B_{\xi}.\label{eq:dist to bmod}
\end{equation}
(We used here that dist is defined cyclically and that $|V_{i}|>\frac{1}{4}\delta$
for all $i$). Similarly,
\begin{equation}
\dist(\xi,V_{i})\le C\delta(|l(\xi)-i\bmod N|+1),\label{eq:dist to bmod-1}
\end{equation}
this time without the restriction $V_{i}\not\subset B_{\xi}$. Another
step that will simplify the analysis below relates to the term $H$
from lemma \ref{lem:local estimate}. Recall that is was defined in
(\ref{eq:def H}) by 
\begin{align*}
H & =\omega_{f}(C(\dist(\xi,V_{i})+\delta)^{4/5})\le\min\Big\{1,C\omega_{f}(\delta^{4/5})\Big(\frac{\dist(\xi,V_{i})+\delta}{\delta}\Big)^{4/5}\Big\}\\
 & \le\min\Big\{1,C\omega_{f}(\delta^{4/5})\Big(\frac{\dist(\xi,V_{i})}{\delta}+1\Big)\Big\}\\
 & \stackrel{\mathclap{\textrm{(\ref{eq:dist to bmod-1})}}}{\le}\min\{1,C\omega_{f}(\delta^{4/5})(|l(\xi)-i\bmod N|+1)\}.
\end{align*}
Denote 
\begin{equation}
\lambda_{l}\coloneqq2^{-m/44}\min\{1,(|l\bmod N|+1)\omega_{f}(\delta^{4/5})\}^{1/4}\label{eq:def lambda_l}
\end{equation}
and get that 
\begin{equation}
2^{-m/44}H^{1/4}\le C\lambda_{l(\xi)-i}.\label{eq:toHixi}
\end{equation}
This will simplify notations in the estimates below. 

We start with $w_{i,\xi,u}^{0}$ for $2^{u}\le\nicefrac{1}{\delta}$.
By lemma \ref{lem:local estimate} and (\ref{eq:toHixi}) 
\begin{align*}
|w_{i,\xi,u}^{0}| & \le C\delta\min\left\{ \frac{1}{\dist(\xi,V_{i})},2^{u+1}\right\} \lambda_{l(\xi)-i}\\
 & \stackrel{(*)}{\le}C\min\left\{ \frac{1}{|l(\xi)-i\bmod N|+1},\delta2^{u+1}\right\} \lambda_{l(\xi)-i}\cdot
\end{align*}
where $(*)$ comes from (\ref{eq:dist to bmod}) and from the observation
that if $V_{i}\subset B_{\xi}$ then the minimum is achieved at $2^{u+1}$
(perhaps up to a multiplicative constant). Define $\widetilde{b}(u,\xi)=\widetilde{b}(u)=\max\{1,\lfloor1/\delta2^{u+1}\rfloor\}$
and get that
\begin{equation}
|w_{i,\xi,u}^{0}|\le\min\left\{ \frac{1}{|l(\xi)-i\bmod n|+1},\frac{1}{\widetilde{b}(u)}\right\} C\lambda_{l(\xi)-i}\label{eq:vtilde}
\end{equation}
We used here $\widetilde{b}$ instead of $b$ because of the factor
$C\lambda_{l(\xi)-i}$. Since it appears in all cases equally, it
will be more convenient to count how many vectors satisfy an inequality
of the form (\ref{eq:vtilde}), for any fixed values of $l$ and $\widetilde{b}$,
and then remove the factor $C\lambda_{l(\xi)-i}$ in the end. To count
the number of $\xi$ with $l(\xi)=l$ for some given $l$, we note
that $\xi\in\frac{1}{U}\mathbb{Z}$ and since $U\le\max\{2^{u+2},2N\}\le\max\{\nicefrac{4}{\delta},2N\}\le\nicefrac{8}{\delta}$
and since $|V_{i}|\le\delta$ for all $i$, we see that each possible
value of $l(\xi)$ repeats at most 9 times. Thus
\[
|\{(\xi,u):2^{u}\le\nicefrac{1}{\delta},l(\xi)=l,\widetilde{b}(u)=\widetilde{b}\}|\le18\qquad\forall l,\widetilde{b}
\]
because we have at most $9$ different $\xi$ which give the same
$l$ and at most 2 different values of $u$ which give the same $\widetilde{b}$
(the largest allowed by the condition $2^{u}\le\nicefrac{1}{\delta}$).

For $2^{u}>\nicefrac{1}{\delta}$ lemma \ref{lem:local estimate},
(\ref{eq:dist to bmod}) and (\ref{eq:toHixi}) give
\[
|w_{i,\xi,u}^{0}|\le C\frac{(\delta2^{u})^{-1/44}\lambda_{i(\xi)-i}}{|l(\xi)-i\bmod N|+1}
\]
so we define $\widetilde{b}(u)=\lfloor(\delta2^{u})^{1/44}\rfloor$.
The multiplicity in $\widetilde{b}$ is at most a constant in this
case, but there is multiplicity $\lfloor\delta U\rfloor+1\le C\delta2^{u}$
in $l$, because $\xi$ is taken in $\frac{1}{U}\mathbb{Z}$. Hence
we get at most $C\delta2^{u}\le C\widetilde{b}(u)^{44}$ vectors satisfying
(\ref{eq:vtilde})

For $w^{\pm}$ and $2^{u}\le\nicefrac{1}{\delta}$ lemma \ref{lem:local estimate},
(\ref{eq:dist to bmod}) and (\ref{eq:toHixi}) give
\[
|w_{i,\xi,u,s,t}^{\pm}|\le C\min\Big\{\frac{1}{|l(\xi)-i\bmod N|+1},\delta2^{s}\Big\}\lambda_{l(\xi)-i}.
\]
Hence we define $\widetilde{b}(\xi,u,s,t)=\widetilde{b}(s)=\lfloor1/\delta2^{s}\rfloor$
and get
\begin{equation}
|w_{i,\xi,u,s,t}^{\pm}|\le\min\Big\{\frac{1}{|l(\xi)-i\bmod N|+1},\frac{1}{\widetilde{b}}\Big\} C\lambda_{l(\xi)-i}.\label{eq:59 vakhetzi}
\end{equation}
The same argument as above gives that each $l(\xi)$ repeats no more
than 9 times, but here we need to count over $u$ and $t$ as well
(with $s$ fixed). The number of possibilities for $t$ given $u$
and $s$ is $2^{u-s-2}$ and $u$ is bounded below by $s+1$ and above
by the requirement $2^{u}\le\nicefrac{1}{\delta}$. Totally we get
\[
|\{(\xi,u,s,t):l(\xi)=l,\widetilde{b}(s)=\widetilde{b}\}|\le9\sum_{\smash{{u\ge s+1}}}^{\smash{2^{u}\le1/\delta}}2^{u-2-s}<\frac{9}{\delta2^{s+1}}\le\frac{9}{2}\widetilde{b}(s)
\]
for all $l$ and $\widetilde{b}$.

Finally, the most complicated case is that of the vectors $w^{\pm}$
for $2^{u}>\nicefrac{1}{\delta}$. Lemma \ref{lem:local estimate},
(\ref{eq:dist to bmod}) and (\ref{eq:toHixi}) give
\begin{equation}
|w_{i,\xi,u,s,t}^{\pm}|\le C\min\Big\{\frac{(\delta2^{u})^{-1/44}}{|i(\xi)-i\bmod N|+1},\delta2^{s}\Big\}\lambda_{i(\xi)-i}.\label{eq:w for u large}
\end{equation}
We define $\widetilde{b}(\xi,u,s,t)=\lfloor\max\{1/\delta2^{s},(\delta2^{u})^{1/44}\}\rfloor$,
so (\ref{eq:59 vakhetzi}) holds. We still need to add up the number
of vectors that correspond to each bound in (\ref{eq:w for u large}).
The number of $\xi$ which give every particular $l$ is bounded by
$\lfloor\delta U\rfloor+1\le C\delta2^{u}$. As for the multiplicity
in $\widetilde{b}$, each value can be achieved either if $\delta2^{s}\in(\frac{1}{\widetilde{b}+1},\frac{1}{\widetilde{b}}]$
and $(\delta2^{u})^{-1/44}\in(\frac{1}{\widetilde{b}+1},\infty)$
or if $\delta2^{s}\in(\frac{1}{\widetilde{b}+1},\infty)$ and $(\delta2^{u})^{-1/44}\in(\frac{1}{\widetilde{b}+1},\frac{1}{\widetilde{b}}]$,
and we need to check, in each case, how many possibilities for $u$,
$s$, $t$ and $\xi$ we have.

In the first case, we get $\delta2^{u}<(\widetilde{b}+1)^{44}$. This
has two implications. First, the number of $u$ which satisfy $1<\delta2^{u}\le(\widetilde{b}+1)^{44}$
can be bounded by $C\log\widetilde{b}$. Second, by the above, each
$l(\xi)$ repeats no more than $C(\widetilde{b}+1)^{44}$ times for
each $t$ and $s$. Further, the restriction $\delta2^{s}\in(\frac{1}{\widetilde{b}+1},\frac{1}{\widetilde{b}}]$
gives that there is at most one possibility for $s$. The number of
possibilities for $t$ given $u$ and $s$ is always $2^{u-s-2}$
and this can be bounded by 
\[
\frac{2^{u}}{2^{s}}\le\frac{(\widetilde{b}+1)^{44}/\delta}{(\widetilde{b}+1)^{-1}/\delta}=(\widetilde{b}+1)^{45}.
\]
So how many $(s,t,u,\xi)$ are there that satisfy $|w_{i,\xi,u,s,t}^{\pm}|\le\min\{1/|i-l\bmod N|+1,\widetilde{b}\}$
for a given $l$ and $\widetilde{b}$? As just explained, there are
$C\log\widetilde{b}$ possibilities for $u$, $C\widetilde{b}^{44}$
possibilities for $\xi$ and $C\widetilde{b}^{45}$ possibilities
for $t$ and $s$ so in total we get no more than $C\widetilde{b}^{89}\log\widetilde{b}\le C\widetilde{b}^{90}$
possibilities.

In the second case ($\delta2^{u})^{-1/44}\in(\frac{1}{\widetilde{b}+1},\frac{1}{\widetilde{b}}]$
so the number of possibilities for $u$ is bounded, and they all satisfy
$\delta2^{u}<(\widetilde{b}+1)^{44}.$ As for $s$, we have that $2^{s}$
ranges between $\frac{1}{\delta(\widetilde{b}+1)}$ and $2^{u}\le\frac{(\widetilde{b}+1)^{44}}{\delta}$.
For each $s$ the number of possible $t$ is $2^{u-s-2}$ so the total
number of possibilities for $(s,t)$ couples is bounded by 
\[
\sum_{s=\lceil\log_{2}1/\delta(\widetilde{b}+1)\rceil}^{\lfloor\log_{2}(\widetilde{b}+1)^{44}/\delta\rfloor}2^{u-s-2}<2^{u-\lceil\log_{2}1/\delta(\widetilde{b}+1)\rceil-1}\le\frac{C(\widetilde{b}+1)^{44}/\delta}{2/(\widetilde{b}+1)\delta}\le C\widetilde{b}^{45}.
\]
Together with the bound on the number of times each $l(\xi)$ repeats
we get a total of $C\widetilde{b}^{89}$ possibilities. This concludes
our bounds. We rearrange all our vectors $w_{i,\xi,u}^{0}$ and $w_{i,\xi,u,s,t}^{\pm}$
into one list $\widetilde{v}_{i,j}$ and define $l(j)=l(u,\xi)$ or
$l(\xi,u,s,t)$, as the case may be, and similarly for $\widetilde{b}$.
The estimates (\ref{eq:vtilde}), (\ref{eq:59 vakhetzi}) become 
\begin{equation}
|\widetilde{v}_{i,j}|\le C_{1}\lambda_{l(j)-i}\min\bigg\{\frac{1}{|l(j)-i\bmod n|+1},\frac{1}{\widetilde{b}(j)}\bigg\}\label{eq:vtilde rearranged}
\end{equation}
and $|\{j:l(j)=l,\widetilde{b}(j)=\widetilde{b}\}|\le C\widetilde{b}^{90}$.\label{pg:vw count end}

It is now time to handle the factors $C_{1}\lambda_{l(j)-i}$. We
have
\begin{align*}
|\widetilde{v}_{i,j}| & \stackrel{\mathclap{\textrm{(\ref{eq:vtilde rearranged})}}}{\le}C_{1}\lambda_{l(j)-i}\min\left\{ \frac{1}{|l(j)-i\bmod n|+1},\frac{1}{\widetilde{b}(j)}\right\} \\
 & \stackrel{\mathclap{\textrm{(\ref{eq:def lambda_l})}}}{\le}C_{1}2^{-m/44}\min\left\{ \frac{1}{|l(j)-i\bmod n|+1},\frac{1}{\widetilde{b}(j)}\right\} \\
 & \qquad\cdot\;\min\{1,(|l(j)-i\bmod n|+1)\omega_{f}(\delta^{4/5})\}^{1/4}\\
 & \le C_{1}2^{-m/44}\min\left\{ \frac{1}{|l(j)-i\bmod n|+1},\min\Big\{\frac{1}{\widetilde{b}(j)},\omega_{f}(\delta^{4/5})^{1/5}\Big\}\right\} .
\end{align*}
Denote $b(j):=\max\{\widetilde{b}(j),\lfloor\omega_{f}(\delta^{4/5})^{-1/5}\rfloor\}$
so that $|\widetilde{v}_{i,j}|\le C_{1}2^{-m/44}/b(j)$. How many
$j$ correspond to a given $l$ and $b$? We can bound the number
simply by $\sum_{\widetilde{b}\le b}C\widetilde{b}^{90}\le Cb^{91}$.
Denote this last constant by $M$, so we have $Mb^{91}$ vectors that
correspond to $b$.

\label{pg:91}Thus we may use lemma \ref{lem:renormalisation simplified}
with the same $M$, with $\gamma=91$ and with the vectors $v_{i,j}=\widetilde{v}_{i,j}2^{m/44}/C_{1}$.
We get a sequence of $\varepsilon_{i}$ such that their inner product
with each vector we fed into the lemma is bounded by $Cb^{-1/50}$.
Estimate 
\begin{align*}
\frac{1}{b} & =\min\left\{ \frac{1}{\widetilde{b}},\frac{1}{\lfloor\omega_{f}(\delta^{4/5})^{-1/5}\rfloor}\right\} \\
 & \le\sqrt{\frac{1}{\widetilde{b}\max\{\lfloor\omega_{f}(\delta^{4/5})^{-1/5}\rfloor,1\}}}\le\frac{C\omega_{f}(\delta^{4/5})^{1/10}}{\widetilde{b}^{1/2}}
\end{align*}
or $b^{-1/50}\le C\widetilde{b}^{-1/100}\cdot\omega_{f}(\delta^{4/5})^{1/500}$.
Inserting the definition of $\widetilde{b}$ from before we get
\begin{align}
\bigg|\sum_{i}\varepsilon_{i}w_{i,\xi,u}^{0}\bigg| & \le C2^{-m/44}\omega_{f}(\delta^{4/5})^{1/500}\begin{cases}
(\delta2^{u})^{1/100} & 2^{u}\le\nicefrac{1}{\delta}\\
(\delta2^{u})^{-1/4400} & 2^{u}>\nicefrac{1}{\delta}
\end{cases}\nonumber \\
\bigg|\sum_{i}\varepsilon_{i}w_{i,\xi,u,s,t}^{\pm}\bigg| & \le C2^{-m/44}\omega_{f}(\delta^{4/5})^{1/500}\min\{(\delta2^{u})^{-1/44},\delta2^{s}\}^{1/100}.\label{eq:sumepsiviwi}
\end{align}
To finish the proof of lemma \ref{lem:spectral dyadic}, assume we
are given some $u\in\mathbb{N}$ and some $r\in\{2^{u-1},\dotsc,2^{u}-1\}$.
Write $r$ in its binary expansion
\[
r=2^{u-1}+\sum2^{s_{k}}\qquad t_{k}\coloneqq2^{u-1}+2^{s_{1}}+\dotsb+2^{s_{k}}
\]
for some decreasing sequence $s_{k}$. We get
\[
D_{r}(x)=D_{2^{u-1}}+\sum_{k}\bigg(\sum_{z=t_{k-1}+1}^{t_{k}}e(zx)+\sum_{z=-t_{k}}^{-t_{k-1}-1}e(zx)\bigg).
\]
Hence for any $\xi\in\frac{1}{U}\{0,\dotsc,U-1\}$ we have
\begin{multline*}
\int_{E_{\xi}}\Delta(x)D_{r}(x-\xi)dx\\
=\sum_{i}\varepsilon_{i}w_{i,\xi,u-1}^{0}+\sum_{k}\sum_{i}\varepsilon_{i}w_{i,\xi,u,s_{k},t_{k-1}}^{+}+\sum_{k}\sum_{i}\varepsilon_{i}w_{i,\xi,u,s_{k},t_{k-1}}^{-}
\end{multline*}
(our assumption that $V_{i}\not\subset B_{\xi}$ whenever $2^{u}>\frac{1}{\delta}$
is satisfied on $E_{\xi}$, recall its definition (\ref{eq:def Ek})).
With (\ref{eq:sumepsiviwi}) we get 
\begin{multline*}
\bigg|\int_{E_{\xi}}\Delta(x)D_{r}(x-\xi)dx\bigg|\\
\le C2^{-m/44}\omega_{f}(\delta^{4/5})^{1/500}\min\{(\delta2^{u})^{1/100},|\log_{2}(\delta2^{u})|(\delta2^{u})^{-1/4400}\}.
\end{multline*}
(the $\log(\delta2^{u})$ in the second estimate comes from the case
that $2^{u}>\nicefrac{1}{\delta}$ in (\ref{eq:sumepsiviwi}), from
the counting on $s$, because as $s$ decreases from $u$ towards
zero there are $\approx\log(\delta2^{u})$ values of $s$ for which
$\min\{(\delta2^{u})^{-1/44},\delta2^{s}\}$ is constant $(\delta2^{u})^{-1/44}$
and only then does it starts to decrease exponentially). Lemma \ref{lem:spectral dyadic}
is thus proved, and due to (\ref{eq:linearises}), so is lemma \ref{lem:main}.
\end{proof}
\phantom\qedhere
\end{proof}
\vspace{-0.8cm}%

\begin{proof}
[Proof of theorem \ref{thm:main}]Recall that the theorem states that
for every continuous function $f$ there is an absolutely continuous
homeomorphism $\psi$ such that $S_{r}(f\circ\psi)$ converges uniformly.
Assume without loss of generality that $||f||_{\infty}\le\tfrac{1}{2}$.
We apply lemma \ref{lem:main} in a doubly infinite process. We start
with the map $I(d)=[-1,1]$ for all $d$, which is of type $(f,1)$
with respect to the trivial partition $\{[0,1]\}$ and to $m=-1$.
We apply lemma \ref{lem:main} and get a map $J_{0}$ which is of
type $(f,1)$ with respect to $\{[0,1]\}$ and to $m=0$ such that
for every $u$, $r$ and $\xi$ as in lemma \ref{lem:main}
\[
\bigg|\int_{E_{\xi}}\Big(\mathbb{E}(f(\phi_{I}(x)))-\mathbb{E}(f(\phi_{J_{0}}(x)))\Big)D_{r}\bigg|\le C2^{-cu}.
\]
We apply lemma \ref{lem:main} to $J_{0}$ and get a $J_{1}$, apply
it again to get $J_{2}$ and so on, and for each $m$ we have for
every $u$, $r$ and $\xi$
\[
\bigg|\int_{E_{\xi}}\Big(\mathbb{E}(f(\phi_{J_{m}}(x)))-\mathbb{E}(f(\phi_{J_{m+1}}(x)))\Big)D_{r}\bigg|\le C2^{-cu-cm}.
\]
The limit of the $J_{m}$ (by which we mean simply the limit for each
dyadic $d$ individually), denote it by $J_{\infty}$, satisfies that
$J_{\infty}(\frac{1}{2})$ is a single point. Summing the errors gives
\[
\bigg|\int_{E_{\xi}}\Big(\mathbb{E}(f(\phi_{I}(x)))-\mathbb{E}(f(\phi_{J_{\infty}}(x)))\Big)D_{r}\bigg|\le C2^{-cu}.
\]
Now because $J_{\infty}(\frac{1}{2})$ is a single point then $J_{\infty}$
is of type $(f,\frac{5}{8})$ with respect to the partition $\{[0,\frac{1}{2}],[\frac{1}{2},1]\}$
and $m=-1$ (the value $\frac{5}{8}$ comes from (\ref{eq:38 58-1})).

Now define $I_{2}=J_{\infty}$ and repeat this procedure. Assume inductively
we have discovered some $I_{k}$ which is of type $(f,\delta_{k})$
with respect to a partition $\mathcal{U}_{k}$ and $m=-1$. We apply
lemma \ref{lem:main} infinitely many times as just explained, get
a sequence of $J_{k,m}$ for $m\in\mathbb{Z}^{+}$ and take the limit
as $m\to\infty$. We get, for every $u$, $\xi$ and $r$,
\[
\bigg|\int_{E_{\xi}}\Big(\mathbb{E}(f(\phi_{I_{k}}(x)))-\mathbb{E}(f(\phi_{J_{k,\infty}}(x)))\Big)D_{r}\bigg|\le C\min\{\delta_{k}2^{u},1/\delta_{k}2^{u}\}^{c}\omega_{f}(\delta_{k}^{c})^{c}.
\]
Now $J_{k,\infty}$ is a single point at the centre of every $U\in\mathcal{U}_{k}$
which satisfies $|\phi_{I_{k}}^{-1}(U)|>\frac{1}{2}\delta_{k}$ and
therefore $J_{k,\infty}$ is of type $(f,\delta_{k+1})$, $\delta_{k+1}\coloneqq\frac{5}{8}\delta_{k}$
with respect to the partition $\mathcal{U}_{k+1}$ that one gets by
splitting each $U\in\mathcal{U}_{k}$ to halves if $|\phi_{I_{k}}^{-1}(U)|>\frac{1}{2}\delta_{k}$
and with respect to $m=-1$. Checking most conditions for a RH-restrictor
of type $(f,\delta_{k+1})$ is straightforward so we check only condition
\ref{enu:approx eps} i.e.\ that $|\phi_{J_{k,\infty}}^{-1}(U)|\in[\frac{1}{4}\delta_{k+1},\delta_{k+1}]$
for all $U\in\mathcal{U}_{k+1}$. There are two cases to check. If
for some $U\in\mathcal{U}_{k}$ we have $|\phi_{I}^{-1}(U)|\le\frac{1}{2}\delta_{k}$
then $U\in\mathcal{U}_{k+1}$, $|\phi_{I_{k}}^{-1}(U)|=|\phi_{J_{k,\infty}}^{-1}(U)|$
and
\[
\frac{1}{4}\delta_{k+1}<\frac{1}{4}\delta_{k}\le|\phi_{I_{k}}^{-1}(U)|\le\frac{1}{2}\delta_{k}<\delta_{k+1}.
\]
The other case is $|\phi_{I_{k}}^{-1}(U)|>\frac{1}{2}\delta_{k}$.
In this case each half of $U$ belongs to $\mathcal{U}_{k+1}$, denote
these halves by $U^{\pm}.$ We use (\ref{eq:38 58-1}) and get
\begin{align*}
|\phi_{J_{k,\infty}}(U^{\pm})| & \le\frac{5}{8}|\phi_{J_{k,\infty}}(U)|=\frac{5}{8}|\phi_{I}(U)|\le\frac{5}{8}\delta_{k}=\delta_{k+1},\\
|\phi_{J_{k,\infty}}(U^{\pm})| & \ge\frac{3}{8}|\phi_{J_{k,\infty}}(U)|=\frac{3}{8}|\phi_{I}(U)|>\frac{3}{8}\cdot\frac{1}{2}\delta_{k}>\frac{5}{32}\delta_{k}=\frac{1}{4}\delta_{k+1}.
\end{align*}
So \ref{enu:approx eps} is proved. This allows to define $I_{k+1}=J_{k,\infty}$
and continue inductively. 

We still have the issue of the integration interval. Indeed, we showed
\begin{multline}
\bigg|\int_{E_{\xi}(\mathcal{U}_{k},I_{k},f,\delta_{k})}\Big(\mathbb{E}(f(\phi_{I_{k}}(x)))-\mathbb{E}(f(\phi_{I_{k+1}}(x)))\Big)D_{r}\bigg|\\
\le C\min\{\delta_{k}2^{u},1/\delta_{k}2^{u}\}^{c}\omega_{f}(\delta_{k}^{c})^{c}\label{eq:same Exi}
\end{multline}
but we wish to compare to the integral over $E_{\xi}(\mathcal{U}_{k+1},I_{k+1},f,\delta_{k+1})$.
Denote for brevity $E_{k}=E_{\xi}(\mathcal{U}_{k},I_{k},f,\delta_{k})$
and $E_{k+1}=E_{\xi}(\mathcal{U}_{k+1},I_{k+1},f,\delta_{k+1})$.
Now, if $2^{u}\le\nicefrac{1}{\delta_{k}}$ then $E_{k+1}=E_{k}=[0,1]$
and there is nothing for us to do. Otherwise $E_{k}\subset E_{k+1}$
and if $\nicefrac{1}{\delta_{k}}<2^{u}\le\nicefrac{1}{\delta_{k+1}}$
then $E_{k+1}\setminus E_{k}$ is a single interval (a union of three
intervals of the type $\phi_{I_{k+1}}^{-1}(U)$, $U\in\mathcal{U}_{k+1}$)
and if $2^{u}>\nicefrac{1}{\delta_{k+1}}$ then it is a union of between
0 and 3 intervals of the type $\phi_{I_{k+1}}^{-1}(U)$, $U\in\mathcal{U}_{k+1}$,
depending on details of the splitting (these intervals might be neighbours
or might not be, this makes no difference to us). In all cases we
apply lemma \ref{lem:rs large} for the RH-restrictor $I_{k+1}$,
for both $F^{+}$ and $F^{-}$, and for all relevant values of $i$.
We get
\[
\frac{1}{2}(F^{+}(x)+F^{-}(x))=\mathbb{E}(f(\phi_{I_{k+1}}(x)))-\mathbb{E}(f(\phi_{I_{k+1}}(\xi))).
\]
(notice that we do not subtract them as we did to get $\Delta$, but
sum them). Thus lemma \ref{lem:rs large} allows to estimate $\int\mathbb{E}(f\circ\phi_{I_{k+1}})$
and we get
\begin{align}
\lefteqn{\bigg|\int_{E_{k+1}\setminus E_{k}}\mathbb{E}(f(\phi_{I_{k+1}}(x))-f(\phi_{I_{k+1}}(\xi)))\cdot D_{r}(x-\xi))\,dx\bigg|}\qquad\qquad\nonumber \\
 & \le\min\bigg\{(2r+1)\delta_{k+1}H,(\delta_{k+1}r)^{-1/22}\sum_{V_{i}\subseteq E_{k+1}\setminus E_{k}}\frac{C\delta_{k+1}\sqrt{H}}{\dist(\xi,V_{i})}\bigg\}\nonumber \\
 & \stackrel{(*)}{\le}C\min\{\delta_{k}2^{u},1/\delta_{k}2^{u}\}^{c}\sqrt{H},\nonumber \\
 & \stackrel{(\dagger)}{\le}C\min\{\delta_{k}2^{u},1/\delta_{k}2^{u}\}^{c}\sqrt{\omega_{f}(\delta_{k}^{c})},\label{eq:change EkEk+1}
\end{align}
where the inequality marked by $(*)$ follows because usually $\dist(\xi,V_{i})\approx\delta_{k+1}$
for $V_{i}\subseteq E_{k+1}\setminus E_{k}$. The only exception is
$\nicefrac{1}{\delta_{k}}<2^{u}\le\nicefrac{1}{\delta_{k+1}}$ where
$\dist(\xi,V_{i})$ might be zero, in which case we use the estimate
$(2r+1)\delta H$ (we changed $\delta_{k+1}$ to $\delta_{k}$ for
shortness, this only affects the constant). The inequality marked
by $(\dagger)$ follows again because $\dist(\xi,V_{i})\le C\delta_{k+1}$
for the relevant intervals. We make one last observation and that
is that
\begin{align}
\lefteqn{{\int_{E_{k}}\big(\mathbb{E}(f(\phi_{I_{k}}(\xi)))-\mathbb{E}(f(\phi_{I_{k+1}}(\xi)))\big)\cdot D_{r}(x-\xi)\,dx}}\qquad\qquad\nonumber \\
 & =\big(\mathbb{E}(f(\phi_{I_{k}}(\xi)))-\mathbb{E}(f(\phi_{I_{k+1}}(\xi)))\big)\Bigg(\int_{E_{k}}D_{r}(x-\xi)\,dx\Bigg)\nonumber \\
 & =\begin{cases}
O(\omega_{f}(\delta_{k}^{c})/\delta_{k}2^{u}) & 2^{u}>\nicefrac{1}{\delta_{k}}\\
\mathbb{E}(f(\phi_{I_{k}}(\xi)))-\mathbb{E}(f(\phi_{I_{k+1}}(\xi))) & 2^{u}\le\nicefrac{1}{\delta_{k}}
\end{cases}\label{eq:silly change xi}
\end{align}
(notice that in the second case this is an equality and not an estimate).
Adding (\ref{eq:same Exi}), (\ref{eq:change EkEk+1}) and (\ref{eq:silly change xi})
gives
\begin{align}
\lefteqn{{\int_{E_{\xi}(\mathcal{U}_{k},I_{k},f,\delta_{k})}\mathbb{E}(f(\phi_{I_{k}}(x))-f(\phi_{I_{k}}(\xi)))\cdot D_{r}(x-\xi)}}\qquad\nonumber \\
 & -\int_{E_{\xi}(\mathcal{U}_{k+1},I_{k+1},f,\delta_{k+1})}\mathbb{E}(f(\phi_{I_{k+1}}(x))-f(\phi_{I_{k+1}}(\xi)))\cdot D_{r}(x-\xi)\nonumber \\
 & =\mathbbm{1}\{2^{u}\le\nicefrac{1}{\delta_{k}}\}(\mathbb{E}(f(\phi_{I_{k}}(\xi)))-\mathbb{E}(f(\phi_{I_{k+1}}(\xi))))\nonumber \\
 & \qquad+O(\min\{\delta_{k}2^{u},1/\delta_{k}2^{u}\}^{c}\omega_{f}(\delta_{k}^{c})^{c}).\label{eq:one term per line ugh}
\end{align}
Fix $u\in\mathbb{N}$, $r\in\{2^{u-1},\dotsc,2^{u}-1\}$ and $\xi\in2^{-u}\{0,\dotsc,2^{u}-1\}$.
As $k\to\infty$ the terms $\{\delta_{k}2^{u},1/\delta_{k}2^{u}\}^{c}$
start small, increase exponentially (recall that $\delta_{k}=\left(\frac{5}{8}\right)^{k}$)
until reaching a constant and then decrease exponentially. Thus if
we summed them (without the term $\omega_{f}(\delta_{k}^{c})^{c}$),
the sum would be constant. Multiplying also by $\omega_{f}(\delta_{k}^{c})^{c}$
we get 
\[
\sum_{k=1}^{\infty}\min\{\delta_{k}2^{u},1/\delta_{k}2^{u}\}^{c}\omega_{f}(\delta_{k}^{c})^{c}\to0
\]
as $u\to\infty$. For the first term on the right hand side of (\ref{eq:one term per line ugh})
we claim that for any $\ell$,
\begin{equation}
\bigg|\sum_{k=1}^{\ell}\mathbbm{1}\{2^{u}\le\nicefrac{1}{\delta_{k}}\}\left(\mathbb{E}(f(\phi_{I_{k}}(\xi)))-\mathbb{E}(f(\phi_{I_{k+1}}(\xi)))\vphantom{\sum}\right)\bigg|\le\omega_{f}(C2^{-(4/5)u}).\label{eq:telescopic}
\end{equation}
Indeed, this is a telescoping sum and hence is equal to either 0 or
to the difference of two averages of $f$ by measures supported in
an interval of length no more than $C2^{-(4/5)u}$ (recall the deterministic
H\"older estimate (\ref{eq:psi-1 45 54})). This shows (\ref{eq:telescopic}).
We conclude that
\[
\bigg|\int_{E_{\xi}(\mathcal{U}_{k},I_{k},f,\delta_{k})}\mathbb{E}\left(f(\phi_{I_{k}}(x))-f(\phi_{I_{k}}(\xi))\text{\ensuremath{\vphantom{\sum}}}\right)D_{r}(x-\xi)\,dx\bigg|\le\epsilon(u)
\]
where $\epsilon(u)$ is some function with $\epsilon(u)\to0$ as $u\to\infty$
($\epsilon$ depends on $f$ but not on anything else, in particular
not on $k$, $r$ or $\xi$). Taking $k\to\infty$ we have that the
$\phi_{I_{k}}$ converge weakly (as measures on $C([0,1])$ with the
supremum norm) to a delta measure on a single homeomorphism $\Phi$,
while $E_{\xi}(\mathcal{U}_{k},I_{k},f,\delta_{k})$ stabilises on
$[0,1]$ as soon as $\delta_{k}<2^{-u}$. Hence weak convergence gives
\[
\bigg|\int_{0}^{1}\big(f(\Phi(x))-f(\Phi(\xi))\big)D_{r}(x-\xi)\,dx\bigg|\le\epsilon(u)
\]
or equivalently,
\begin{equation}
\bigg|S_{r}(f\circ\Phi)(\xi)-f(\Phi(\xi))\bigg|\le\epsilon(u).\label{eq:on a grid}
\end{equation}
The inequality (\ref{eq:on a grid}) implies a uniform bound, namely
\begin{equation}
||S_{r}(f\circ\Phi))-f\circ\Phi)||_{\infty}\to0.\label{eq:everywhere}
\end{equation}
The passage from (\ref{eq:everywhere}) to (\ref{eq:on a grid}) is
a well-known argument, but let us do it in details anyway. We use
Bernstein's inequality for trigonometric polynomials. Recall that
this inequality states that for any trigonometric polynomial $P$
of degree $n$, $||P'||_{\infty}\le2\pi n||P||_{\infty}$. We apply
Bernstein's inequality to $P_{r}\coloneqq S_{r}(f\circ\Phi)-F_{r}(f\circ\Phi)$,
where $F_{r}$ is the Ces\`aro partial sum of $f\circ\Phi$. Since
$F_{r}(f\circ\Phi)\to f\circ\Phi$ uniformly we get that $|P_{r}(\xi)|\le\epsilon_{2}(u)$
for all $\xi\in2^{-u-2}\mathbb{Z}$. Let $x$ be the point where $|P_{r}|$
attains its maximum and let $\xi\in2^{-u-2}\mathbb{Z}$ be such that
$|x-\xi|\le2^{-u-3}$. The mean value theorem now claims that 
\[
|P_{r}(\xi)|\ge|P_{r}(x)|-||P_{r}'||2^{-u-3}\stackrel{(*)}{\ge}|P_{r}(x)|-|P_{r}(x)|\frac{2\pi r}{2^{u+3}}\stackrel{(\dagger)}{\ge}|P_{r}(x)|\Big(1-\frac{\pi}{4}\Big)
\]
where $(*)$ follows from Bernstein's inequality and $(\dagger)$
follows from $r<2^{u}$. Thus $||P_{r}||_{\infty}\le\varepsilon_{2}(u)/(1-\pi/4)$.
Returning to $S_{r}$ (again using that $F_{r}(f\circ\Phi)$ converges
uniformly) shows (\ref{eq:everywhere}).

The homeomorphism $\Phi$ is in fact $\psi_{f,\tau,\infty}$ with
$\tau(d)=\lim_{k\to\infty}I_{k}(d)$ so by lemma \ref{lem:abs cont}
it is absolutely continuous, and if $\eta$ was chosen sufficiently
small also satisfies $\Phi'\in L^{p}$ for any desired $p$, fixed
in advance. The theorem is proved.
\end{proof}

\end{document}